\newtheorem{theorem}{Theorem}
\newtheorem{lemma}{Lemma}
\newtheorem{proposition}{Proposition}
\newtheorem{definition}{Definition}
\newtheorem{corollary}{Corollary}
\newtheorem{claim}{Claim}
\newtheorem{thm}{Theorem}[section]
 \newtheorem{cor}[thm]{Corollary}
 \newtheorem{lem}[thm]{Lemma}
 \newtheorem{prop}[thm]{Proposition}
 \theoremstyle{definition}
 \theoremstyle{remark}
 \newtheorem{rem}[thm]{Remark}
 \numberwithin{equation}{section}
\theoremstyle{remark}
\newtheorem{remark}[thm]{Remark}
\newcommand{\vertiii}[1]{{\left\vert\kern-0.25ex\left\vert\kern-0.25ex\left\vert #1
    \right\vert\kern-0.25ex\right\vert\kern-0.25ex\right\vert}}
\newcommand{\R}{{\mathbb R}}
\newcommand{\eu}{{\rm e}}
\newcommand{\C}{{\mathbb C}}
\DeclareMathOperator*{\supess}{sup\, ess}
\begin{document}

\selectlanguage{english}

\newcommand{\abs}[1]{\lvert#1\rvert}
\newcommand{\f}[2]{\frac{#1}{#2}}
\newcommand{\dpr}[2]{\langle #1,#2 \rangle}
\newcommand{\jnab}{\langle\nabla\rangle}
\newcommand{\jxi}{\langle\xi\rangle}
\newcommand{\jeta}{\langle\eta\rangle}
\newcommand{\tvp}{\tilde{\vp}}
\newcommand{\tB}{\tilde{B}}

\newcommand{\ppr}{p'}
\newcommand{\pnf}{+\infty}
\newcommand{\mnf}{-\infty}
\newcommand{\nf}{\infty}
\newcommand{\oop}{\frac{1}{p}}
\newcommand{\ooppr}{\frac{1}{p'}}
\newcommand{\ooq}{\frac{1}{q}}
\newcommand{\ooqpr}{\frac{1}{q'}}
\newcommand{\oor}{\frac{1}{r}}
\newcommand{\oorpr}{\frac{1}{r'}}

\newcommand{\al}{\alpha}
\newcommand{\be}{\beta}
\newcommand{\wh}[1]{\widehat{#1}}
\newcommand{\ga}{\gamma}
\newcommand{\Ga}{\Gamma}
\newcommand{\de}{\delta}
\newcommand{\ben}{\beta_n}
\newcommand{\De}{\Delta}
\newcommand{\ve}{\varepsilon}
\newcommand{\ze}{\zeta}
\newcommand{\Th}{\Theta}
\newcommand{\ka}{\kappa}
\newcommand{\la}{\lambda}
\newcommand{\laj}{\lambda_j}
\newcommand{\lak}{\lambda_k}
\newcommand{\La}{\Lambda}
\newcommand{\si}{\sigma}
\newcommand{\Si}{\Sigma}
\newcommand{\vp}{\varphi}
\newcommand{\om}{\omega}
\newcommand{\Om}{\Omega}

\newcommand{\ro}{{\mathbf R}}
\newcommand{\rn}{{\mathbf R}^n}
\newcommand{\rd}{{\mathbf R}^d}
\newcommand{\rmm}{{\mathbf R}^m}
\newcommand{\rone}{\mathbf R}
\newcommand{\rtwo}{\mathbf R^2}
\newcommand{\rthree}{\mathbf R^3}
\newcommand{\rfour}{\mathbf R^4}
\newcommand{\ronen}{{\mathbf R}^{n+1}}
\newcommand{\ku}{\mathbf u}
\newcommand{\kw}{\mathbf w}
\newcommand{\kf}{\mathbf f}
\newcommand{\kz}{\mathbf z}

\newcommand{\tn}{\mathbf T^n}
\newcommand{\tone}{\mathbf T^1}
\newcommand{\ttwo}{\mathbf T^2}
\newcommand{\tthree}{\mathbf T^3}
\newcommand{\tfour}{\mathbf T^4}

\newcommand{\zn}{\mathbf Z^n}
\newcommand{\zp}{\mathbf Z^+}
\newcommand{\zone}{\mathbf Z^1}
\newcommand{\zz}{\mathbf Z}
\newcommand{\ztwo}{\mathbf Z^2}
\newcommand{\zthree}{\mathbf Z^3}
\newcommand{\zfour}{\mathbf Z^4}

\newcommand{\hn}{\mathbf H^n}
\newcommand{\hone}{\mathbf H^1}
\newcommand{\htwo}{\mathbf H^2}
\newcommand{\hthree}{\mathbf H^3}
\newcommand{\hfour}{\mathbf H^4}

\newcommand{\cone}{\mathbf C^1}
\newcommand{\ctwo}{\mathbf C^2}
\newcommand{\cthree}{\mathbf C^3}
\newcommand{\cfour}{\mathbf C^4}

\newcommand{\sn}{\mathbf S^{n-1}}
\newcommand{\sone}{\mathbf S^1}
\newcommand{\stwo}{\mathbf S^2}
\newcommand{\sthree}{\mathbf S^3}
\newcommand{\sfour}{\mathbf S^4}

\newcommand{\lp}{L^{p}}
\newcommand{\lppr}{L^{p'}}
\newcommand{\lqq}{L^{q}}
\newcommand{\lr}{L^{r}}
\newcommand{\echi}{(1-\chi(x/M))}
\newcommand{\chip}{\chi'(x/M)}

\newcommand{\wlp}{L^{p,\infty}}
\newcommand{\wlq}{L^{q,\infty}}
\newcommand{\wlr}{L^{r,\infty}}
\newcommand{\wlo}{L^{1,\infty}}

\newcommand{\lprn}{L^{p}(\rn)}
\newcommand{\lptn}{L^{p}(\tn)}
\newcommand{\lpzn}{L^{p}(\zn)}
\newcommand{\lpcn}{L^{p}(\cn)}
\newcommand{\lphn}{L^{p}(\cn)}

\newcommand{\lprone}{L^{p}(\rone)}
\newcommand{\lptone}{L^{p}(\tone)}
\newcommand{\lpzone}{L^{p}(\zone)}
\newcommand{\lpcone}{L^{p}(\cone)}
\newcommand{\lphone}{L^{p}(\hone)}

\newcommand{\lqrn}{L^{q}(\rn)}
\newcommand{\lqtn}{L^{q}(\tn)}
\newcommand{\lqzn}{L^{q}(\zn)}
\newcommand{\lqcn}{L^{q}(\cn)}
\newcommand{\lqhn}{L^{q}(\hn)}

\newcommand{\lo}{L^{1}}
\newcommand{\lt}{L^{2}}
\newcommand{\li}{L^{\infty}}

\newcommand{\co}{C^{1}}
\newcommand{\ci}{C^{\infty}}
\newcommand{\coi}{C_0^{\infty}}

\newcommand{\ca}{\mathcal A}
\newcommand{\cs}{\mathcal S}
\newcommand{\cm}{\mathcal M}
\newcommand{\cf}{\mathcal F}
\newcommand{\cb}{\mathcal B}
\newcommand{\ce}{\mathcal E}
\newcommand{\cd}{\mathcal D}
\newcommand{\cn}{\mathcal N}
\newcommand{\cz}{\mathcal Z}
\newcommand{\crr}{\mathbf R}
\newcommand{\cc}{\mathcal C}
\newcommand{\ch}{\mathcal H}
\newcommand{\cq}{\mathcal Q}
\newcommand{\cp}{\mathcal P}
\newcommand{\cx}{\mathcal X}

\newcommand{\pv}{\textup{p.v.}\,}
\newcommand{\loc}{\textup{loc}}
\newcommand{\intl}{\int\limits}
\newcommand{\iintl}{\iint\limits}
\newcommand{\dint}{\displaystyle\int}
\newcommand{\diint}{\displaystyle\iint}
\newcommand{\dintl}{\displaystyle\intl}
\newcommand{\diintl}{\displaystyle\iintl}
\newcommand{\liml}{\lim\limits}
\newcommand{\suml}{\sum\limits}
\newcommand{\ltwo}{L^{2}}
\newcommand{\supl}{\sup\limits}
\newcommand{\df}{\displaystyle\frac}
\newcommand{\p}{\partial}
\newcommand{\Ar}{\textup{Arg}}
\newcommand{\abssigk}{\widehat{|\si_k|}}
\newcommand{\ed}{(1-\p_x^2)^{-1}}
\newcommand{\tT}{\tilde{T}}
\newcommand{\tV}{\tilde{V}}
\newcommand{\wt}{\widetilde}
\newcommand{\Qvi}{Q_{\nu,i}}
\newcommand{\sjv}{a_{j,\nu}}
\newcommand{\sj}{a_j}
\newcommand{\pvs}{P_\nu^s}
\newcommand{\pva}{P_1^s}
\newcommand{\cjk}{c_{j,k}^{m,s}}
\newcommand{\Bjsnu}{B_{j-s,\nu}}
\newcommand{\Bjs}{B_{j-s}}
\newcommand{\Ly}{L_i^y}
\newcommand{\dd}[1]{\f{\partial}{\partial #1}}
\newcommand{\czz}{Calder\'on-Zygmund}

\newcommand{\lbl}{\label}
\newcommand{\beq}{\begin{equation}}
\newcommand{\eeq}{\end{equation}}
\newcommand{\beqna}{\begin{eqnarray*}}
\newcommand{\eeqna}{\end{eqnarray*}}
\newcommand{\beqn}{\begin{equation*}}
\newcommand{\eeqn}{\end{equation*}}
\newcommand{\bp}{\begin{proof}}
\newcommand{\ep}{\end{proof}}
\newcommand{\bprop}{\begin{proposition}}
\newcommand{\eprop}{\end{proposition}}
\newcommand{\bt}{\begin{theorem}}
\newcommand{\et}{\end{theorem}}
\newcommand{\bex}{\begin{Example}}
\newcommand{\eex}{\end{Example}}
\newcommand{\bc}{\begin{corollary}}
\newcommand{\ec}{\end{corollary}}
\newcommand{\bcl}{\begin{claim}}
\newcommand{\ecl}{\end{claim}}
\newcommand{\bl}{\begin{lemma}}
\newcommand{\el}{\end{lemma}}

\newcommand{\bib}[4]{\bibitem{#1}{\sc#2: }{\it#3. }{#4.}}

\title[Fractional Schr\"odinger-Poisson-Slater system in one dimension]
 {Fractional Schr\"odinger-Poisson-Slater system in one dimension}

\author[A. R. Giammetta]{Anna Rita Giammetta}
   \address{Department of Mathematics, University of Pisa,
   Italy}
   \email{giammetta@mail.dm.unipi.it}
   
\subjclass{}

\keywords{Fractional Poisson equation, $1D$ Schr\"odinger-Poisson-Slater, Hartree equations}

\date{\today}

\begin{abstract}
In this paper we study local and global well-posedness of the following Cauchy problem: 

\begin{subnumcases}
{\,}
i\partial_t\Psi+\frac{1}{2}\Delta_{x}\Psi = A_0\Psi +\alpha |\Psi|^{\gamma-1}\Psi \,\,\,\,\,\,\,\,\,\,(t,x)\in\R\times\R \notag\\
 (-\Delta_{x})^{\sigma/2}A_0= |\Psi|^2 \notag\\
 \Psi(0,\cdot)=f, \notag
 \end{subnumcases}
 with $\sigma\in(0,1)$, $\alpha=\pm 1$, $1<\gamma\leq 5$, in the spaces $L^2(\R)$ and $H^1(\R)$. \\
\end{abstract}
\maketitle
\section{Introduction}
The nonlinear Schr\"odinger equation
\begin{equation*}
i\partial_t\Psi+\frac{1}{2}\Delta_{x}\Psi = \alpha |\Psi|^{\gamma-1}\Psi,
\end{equation*} 
with $\alpha=\pm 1$,
is one of the universal model to describe the evolution of a wave packet in a weakly nonlinear and dispersive media. In particular, the case $\gamma=3$ occurs to model different physical phenomena: the propagation of waves in optical fibers for $n=1$, the focusing of laser beams for $n=2$, the Bose-Einstain condensation phenomenon for $n=3$, see  \cite{sulemsulem}, \cite{karlsson} and references therein.\\
In the construction of a mathematical model, many physical laws are simplified, so, it is essential to deal with well posed problems: existence of the solution indicates that the model is coherent, uniqueness and stability are related to the problem of approximate the solution with numerical algorithms. The math problem of well-posedness of NLS has been studied for a long time and we can find its history and its current state of the art at the web page \cite{tao} \href{http://www.math.ucla.edu/~tao/Dispersive/}{"Local and global well-posedness for non-linear dispersive and wave equations"} manteined by Colliander, Keel, Staffilani, Takaoka and Tao. In addition, we mention two fondamental monographs specialized in the nonlinear Schr\"odinger equation: Cazenave \cite{cazenave} and Sulem Sulem \cite{sulemsulem}.

 Schr\"odinger-Poisson-Slater system is a nonlinear Schr\"odinger mixed–system which combines
the nonlinear and nonlocal Coulomb interaction, $A_0$, with a local potential nonlinearity
known as the ”Slater exchange term”: 
\begin{subnumcases}
{\,(SPS)}
\partial_t \psi+\frac{1}{2}\Delta \psi =A_0 \psi -C |\psi|^{\gamma-1}\psi\notag\\
-\Delta A_0 = |\psi|^2,\notag
\end{subnumcases}
with $C\geq 0$. Such a model appears in studying of quantum transport in semiconductor devices as a correction to the
Schr\"odinger-Poisson (SP) system ( $C = 0$). For $3D$ well-posedness results of (SPS) in $L^2$ and $H^1$ we mention \cite{bokanowskilopezsoler}. For asymptotic behaviour of $3D$ (SPS) solutions, we mention \cite{sanchez}. One can see a broad literature also about problems concerning the existence and stability
of standing waves for systems like SPS: \cite{fortunato}, \cite{Ruiz}, \cite{nonlinRuiz} , \cite{visciglia}, \cite {georgiev}, \cite{coclitegeorgiev} and references therein.\\ 
Although there are many papers concerning (SPS) and similar systems in 3D, for the case of one space dimension the literature is narrower. The first 1D global results were established for the Maxwell-Schr\"odinger system, which is a generalization of (SP) system that includes the magnetic field. The first result is due to 
Nakamitsu and Tsutsumi \cite{nakamitsutsutsumi} and  uses the Lorentz gauge and high regularity of initial data. Those assumptions imply also the following boundary condition on the electric potential:
\begin{equation*}
A_0(t,x)\rightarrow 0 \,\,\,\,(|x| \rightarrow \infty).
\end{equation*}
Later, Tsutsumi, in \cite{tsutsumi95}, proved that this condition can be relaxed to 
\begin{equation}\label{ultraviolet}
A_0(t,x)\rightarrow c_0 |x| \,\,\, (|x| \rightarrow \infty),
\end{equation}
where,
\begin{equation*}\label{ultraviolet1}
c_0=\frac{1}{2}\int_{\R}|\Psi(0,x)|^2\,dx,
\end{equation*} 
and the initial datum is in $H^1(\R)\cap L^2(\R,|x|\,dx)$.\\ 
Recently, in 1D context, the global well-posedness of the Cauchy problem for the \emph{Hartree equations}
\begin{subnumcases}{ \, }
i\partial_t\Psi+\frac{1}{2}\Delta_{x}\Psi = \lambda A_0\Psi \,\,\,&  $\lambda\in\R$ \notag\\
 (-\Delta_{x})^{\sigma/2}A_0= |\Psi|^2 \,\,\,&  $\sigma\in (0,1)$ \notag
 \end{subnumcases}
with initial data in $H^s(\R),s\geq 0$, was studied in \cite{2007hartreewp} and in \cite{ozawa} (with exchange-correlation correction).\\
 
In this work we study, in one dimensional space, a model like (SPS) with the fractional Poisson equation of Hartree model. Therefore, our attempt is to study a fractional Schr\"odinger-Poisson-Slater (FSPS) system 
\begin{subnumcases}
{ \, }
i\partial_t\Psi+\frac{1}{2}\Delta_{x}\Psi = A_0\Psi +\alpha |\Psi|^{\gamma-1}\Psi  \label{1}\\
 (-\Delta_{x})^{\sigma/2}A_0= |\Psi|^2  \label{2},
 \end{subnumcases}
with $\alpha=\pm 1$ and where $\sigma\in(0,1)$ and $\gamma\in(1,5]$ are chosen such that no boundary condition of type \eqref{ultraviolet} are required.\\
By physical viewpoint, fractional powers of the Laplacian are important in many situations in which one has to consider long-range interaction and anomalous phenomena, see \cite{valdinoci} and references therein. On the other hand, by a mathematical viewpoint, the fractional Poisson equation brings some significant difficulties in the analysis of the well-posedness and allows us to obtain a well-posedness result in one dimension when long range interactions are taken into account.\\

Our goal is to establish existence and uniqueness results about the Cauchy problem (FSPS) with initial data in $L^2(\R)$ and $H^1{(\R)}$. We give a sketch of the plan of the work.\\
At first, following the work of Kato \cite{kato}, we rewrite the Cauchy problem (FSPS) as the integral equation
 \begin{align*}\label{integralform1}
 \Psi(t)= S(t)f-i\int_{0}^{t}S(t-s)& A_0(\Psi(s))\Psi(s)\,ds\\
 &-i\alpha \int_{0}^{t}S(t-s)|\Psi|^{\gamma-1}(s)\Psi(s)\,ds,
 \end{align*}
 where $S(t)$ denotes the Schr\"odinger group $ \eu ^{i\frac{\Delta}{2}t}$ and the electric potential $A_0$ solves the fractional Poisson equation \eqref{2}.\\
We deal with local solvability of the initial value problem in $L^2(\R)$ with standard \emph{contraction argument} obtained by linear techniques (\emph{Strichartz estimates}). The problem is finding at least one admissible pair, $(q,r)$, for which the classical contraction argument works at the same time for the nonlocal and for the local nonlinearity. Indeed, the nonlocal term required to introduce some convolution estimates. We have the following main result:
\begin{thm}\label{11}
Let $f\in L^2(\R)$ and $\gamma\neq 5$. \\
Then, there exists an interval $I_\gamma\subseteq (0,1)$, such that, for all $\sigma\in I_\gamma$, one can find a time $T=T(\|f\|_{L^2})>0$ and a unique wave function $\Psi$, 
$$\Psi\colon[0,T]\times \R\to \C,$$
solution of the Cauchy problem \eqref{1}-\eqref{2}. \\
In addition, we have
\begin{equation*}\label{}
\Psi \in C([0,T],L^2)\cap L^q([0,T],L^r),
\end{equation*}
for any $(q,r)$ admissible pair.
\end{thm}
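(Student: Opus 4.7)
The plan is to recast the system via Duhamel's principle as the fixed-point equation $\Psi=\Phi(\Psi)$ with
\[
\Phi(\Psi)(t)=S(t)f-i\int_{0}^{t}S(t-s)\bigl[A_0(\Psi)\Psi+\alpha|\Psi|^{\gamma-1}\Psi\bigr]\,ds,
\]
where the fractional Poisson equation is solved in one space dimension by the Riesz potential, $A_0(\Psi)=c_\sigma\,|x|^{\sigma-1}\!*|\Psi|^2$. I will run a Cazenave--Kato style contraction argument in the space
\[
X_T:=C\bigl([0,T];L^2(\R)\bigr)\cap L^q\bigl([0,T];L^r(\R)\bigr),
\]
where $(q,r)$ is the Strichartz-admissible pair tailored to the local nonlinearity, $r=\gamma+1$, $q=4(\gamma+1)/(\gamma-1)$, so that $2/q+1/r=1/2$. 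The homogeneous Strichartz estimate controls the free term, $\|S(t)f\|_{X_T}\lesssim\|f\|_{L^2}$ uniformly in $T$, and the inhomogeneous one controls each Duhamel term through an input norm $L^{\tilde q'}_t L^{\tilde r'}_x$ for a possibly different admissible pair $(\tilde q,\tilde r)$.

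The local term is treated by the classical $L^2$-subcritical computation: H\"older in space with $\gamma r'=r$ gives $\||\Psi|^{\gamma-1}\Psi\|_{L^{r'}_x}=\|\Psi\|_{L^r_x}^\gamma$, and H\"older in time with $\gamma q'<q$, which is equivalent to $\gamma<5$, yields $\||\Psi|^{\gamma-1}\Psi\|_{L^{q'}_t L^{r'}_x}\leq T^{\theta_\gamma}\|\Psi\|_{L^q_t L^r_x}^\gamma$ with some $\theta_\gamma>0$. For the nonlocal term I combine the one-dimensional Hardy--Littlewood--Sobolev inequality
\[
\bigl\||x|^{\sigma-1}*|\Psi|^2\bigr\|_{L^m_x}\lesssim\|\Psi\|_{L^{2k}_x}^2,\qquad \tfrac{1}{k}-\tfrac{1}{m}=\sigma,
\]
with H\"older's inequality in space. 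Setting $2k=r$, so that the HLS input sits in the contraction norm, and putting the third factor in $L^r$, one gets
\[
\bigl\|A_0(\Psi)\Psi\bigr\|_{L^{\tilde r'}_x}\lesssim\|\Psi\|_{L^r_x}^3,\qquad \frac{1}{\tilde r'}=\frac{3}{r}-\sigma,
\]
with $(\tilde q,\tilde r)$ then determined by $2/\tilde q+1/\tilde r=1/2$, followed by a time-H\"older step producing $\|A_0(\Psi)\Psi\|_{L^{\tilde q'}_t L^{\tilde r'}_x}\leq T^{\eta_{\gamma,\sigma}}\|\Psi\|_{L^q_t L^r_x}^3$. Imposing that $(\tilde q,\tilde r)$ be 1D-admissible ($\tilde r\in[2,\infty)$, $\tilde q>4$) and that $\eta_{\gamma,\sigma}>0$ cuts out an open subinterval $I_\gamma\subset(0,1)$, essentially of the form $3/(\gamma+1)-1<\sigma\leq 3/(\gamma+1)-1/2$ up to boundary adjustments; it is non-empty for every $\gamma\in(1,5)$ and degenerates at the $L^2$-critical exponent $\gamma=5$.

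Combining the two pieces I obtain
\[
\|\Phi(\Psi)\|_{X_T}\leq C\|f\|_{L^2}+C\bigl(T^{\theta_\gamma}\|\Psi\|_{X_T}^\gamma+T^{\eta_{\gamma,\sigma}}\|\Psi\|_{X_T}^3\bigr),
\]
together with a matching Lipschitz estimate for $\Phi(\Psi)-\Phi(\Psi')$; for the latter the nonlocal difference is handled via bilinearity of $|\Psi|^2\mapsto A_0(\Psi)$ by writing
\[
A_0(\Psi)\Psi-A_0(\Psi')\Psi'=A_0(\Psi)(\Psi-\Psi')+\bigl(|x|^{\sigma-1}*(|\Psi|^2-|\Psi'|^2)\bigr)\Psi'
\]
and expanding $|\Psi|^2-|\Psi'|^2$ linearly in $\Psi-\Psi'$ and $\Psi+\Psi'$. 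Taking the ball of radius $R=2C\|f\|_{L^2}$ in $X_T$ and choosing $T=T(\|f\|_{L^2})$ small enough turns $\Phi$ into a strict contraction there, yielding the unique fixed point; membership in $L^{q_*}L^{r_*}_x$ for every admissible pair follows a posteriori by feeding the solution back into Duhamel and using the inhomogeneous Strichartz estimate with a different output pair. The main obstacle is precisely the exponent matching of the second paragraph: once the local nonlinearity forces the contraction space to $r=\gamma+1$, one must verify that the HLS-plus-H\"older chain for $A_0(\Psi)\Psi$ closes inside the \emph{same} space with a Strichartz-admissible input pair and a positive time exponent, and this compatibility is exactly what defines $I_\gamma$ and breaks down at $\gamma=5$.
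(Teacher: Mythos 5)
Your proposal is correct and follows essentially the same route as the paper: Duhamel's formula, a contraction in $L^\infty L^2\cap L^qL^r$ built on Strichartz estimates with mismatched admissible pairs, Hardy--Littlewood--Sobolev plus H\"older in space and time for the Riesz-potential term, and positivity of the resulting time exponents to carve out $I_\gamma$ and see the breakdown at $\gamma=5$. The only (harmless) difference is that you fix $r=\gamma+1$, which yields a possibly smaller interval $I_\gamma$ than the paper's, where $r$ is allowed to range over $(2,\tfrac{2}{\sigma})\cap[\tfrac{3}{1+\sigma},\tfrac{6}{1+2\sigma}]\cap[\gamma,2\gamma]$ (for instance, at $\gamma=3$ your choice gives $\sigma\in(0,1/4]$ instead of $(0,1/2]$); since the theorem only asserts the existence of some interval, the conclusion is unaffected.
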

We note that, if $\gamma=3$, the Theorem \ref{11} holds for $\sigma\in(0,\frac{1}{2}]$.\\
The problem of extending the local solution to all times can be solved thanks to \emph{conservation laws} of $L^2$-norm (charge or mass conservation):
\begin{thm}\label{21}
 Let $f\in L^2(\R)$, $\sigma\in I_\gamma$ and $\gamma\neq 5$. Then, the Cauchy problem \eqref{1}-\eqref{2} has a unique global solution $\Psi \in  C(\R,L^2(\R))\cap L^q(\R,L^r(\R))$, for any $(q,r)$ admissible pair.
 \end{thm}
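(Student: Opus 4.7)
The strategy is the classical extension-by-conservation argument. Theorem \ref{11} produces a maximal solution $\Psi\in C([0,T_{\max}),L^2)\cap L^q_{\loc}([0,T_{\max}),L^r)$ together with a blow-up alternative: either $T_{\max}=+\infty$ or $\|\Psi(t)\|_{L^2}\to\infty$ as $t\uparrow T_{\max}$. Since the existence time $T(\|f\|_{L^2})$ furnished by Theorem \ref{11} depends only on the $L^2$-norm of the data, it suffices to prove \emph{mass conservation}
\begin{equation*}
\|\Psi(t)\|_{L^2(\R)}=\|f\|_{L^2(\R)}\qquad\text{for all }t\in[0,T_{\max}),
\end{equation*}
and then iterate the local theory on consecutive intervals of length $T(\|f\|_{L^2})$ to reach any prescribed time.

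To prove mass conservation, I would first carry out the formal computation. Multiplying \eqref{1} by $\bar\Psi$, integrating in $x$, and taking imaginary parts gives
\begin{equation*}
\frac{d}{dt}\|\Psi(t)\|_{L^2}^{2}=2\,\mathrm{Im}\int_{\R}\bigl(A_0|\Psi|^2+\alpha|\Psi|^{\gamma+1}\bigr)\,dx=0,
\end{equation*}
provided the right-hand side is real. The Slater term contributes $\alpha\int|\Psi|^{\gamma+1}$, which is manifestly real. For the nonlocal term one has to check that $A_0$ is real-valued: since $(-\Delta)^{\sigma/2}$ is a positive self-adjoint operator with real symbol $|\xi|^\sigma$ and $|\Psi|^2$ is real, the solution $A_0=(-\Delta)^{-\sigma/2}|\Psi|^2$ of \eqref{2} is real, so $\int A_0|\Psi|^2\in\R$ as well.

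The formal computation above is not directly legitimate at the $L^2$ level because $\Delta\Psi$ and $A_0\Psi$ are only distributions. I would make it rigorous by regularization: approximate $f$ by a sequence $f_n\in H^2(\R)$ with $f_n\to f$ in $L^2$, run Theorem \ref{11} to obtain smoother local solutions $\Psi_n$ (the persistence of regularity for the linear Schrödinger group together with the Strichartz estimates used in Theorem \ref{11} yields $\Psi_n\in C([0,T],H^2)$ on a common time interval), perform the energy identity for $\Psi_n$ where every integration by parts is justified, and then pass to the limit using the continuous dependence on data inherent to the contraction-mapping proof of Theorem \ref{11}. This gives $\|\Psi(t)\|_{L^2}=\|f\|_{L^2}$ for $t$ in the local interval, and hence by the blow-up alternative $T_{\max}=+\infty$. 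An entirely analogous argument treats negative times.

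The main technical obstacle is the handling of the nonlocal term in the approximation step: one must know that $A_0[\Psi_n]\Psi_n\to A_0[\Psi]\Psi$ in a suitable sense so as to pass to the limit in the mass identity. Concretely, one needs a continuity estimate of the form $\|A_0[\Psi]\Psi-A_0[\Phi]\Phi\|_X\lesssim (\|\Psi\|_Y+\|\Phi\|_Y)\|\Psi-\Phi\|_Y$ on the Strichartz spaces used for \eqref{1}-\eqref{2}. Fortunately this is precisely the convolution/Strichartz estimate already established in the proof of Theorem \ref{11}, so no new ingredient is required and the argument closes.
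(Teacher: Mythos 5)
Your proposal is correct and follows essentially the same route as the paper: local existence with an existence time depending only on $\|f\|_{L^2}$, mass conservation proved by multiplying by $\bar\Psi$ and justified via regularization/density, and the blow-up alternative to conclude globality. The paper's own proofs of the conservation law and of the blow-up alternative are exactly the steps you outline (the paper defers the density argument to Cazenave, Section 4.6), so no divergence worth noting.
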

 The critical case, $\gamma=5$ is more delicate, but the problem lies only in the local nonlinearity. So, we have local well-posedness for large data and global well-posedness for small data:
 \begin{thm}\label{3}
 Let $f\in L^2(\R)$, $\sigma\in I_5$ and $\gamma=5$. There exists a maximal interval $(-T_{min},T_{max})$, $T_{min}=T_{min}(f)$ and $T_{max}=T_{max}(f)$, such that the Cauchy problem \eqref{1}-\eqref{2} has a unique solution
 $$\Psi \in  C([-T_{min},T_{max}],L^2(\R))\cap L^q([-T_{min},T_{max}],L^r(\R)),$$
  for any $(q,r)$ admissible pair.
 \end{thm}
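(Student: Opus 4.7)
The proof adapts the classical $L^2$-critical contraction scheme (cf.\ Cazenave \cite{cazenave}) to the FSPS system. Because $\gamma = 5 = 1 + 4/n$ with $n=1$, the local nonlinearity $\alpha|\Psi|^4\Psi$ is scale-invariant in $L^2(\R)$, so the lifetime of the solution cannot depend only on $\|f\|_{L^2}$; it is instead governed by the size of the critical Strichartz norm of the free evolution $S(t)f$ on the interval under consideration. The nonlocal Hartree-type piece is subcritical relative to this scaling and is treated exactly as in the proof of Theorem \ref{11}, which is what keeps the admissible range of $\sigma$ identical to $I_5$.

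\textbf{Main steps.} Work with the diagonal $1D$ admissible pair $(q,r) = (6,6)$, compatible with the quintic nonlinearity through the H\"older identity $\||u|^4 u\|_{L^{6/5}_t L^{6/5}_x} = \|u\|_{L^6_t L^6_x}^5$. The key observation is that, although the global Strichartz bound $\|S(t)f\|_{L^6_{t,x}(\R\times\R)} \lesssim \|f\|_{L^2}$ is $L^2$-critical, one still has $\|S(t)f\|_{L^6([0,T]\times\R)} \to 0$ as $T \to 0^+$ for every fixed $f \in L^2$: Schwartz data give finite Strichartz norm on any $[0,T]\times\R$ by the dispersive decay, dominated convergence supplies the limit, and density together with Strichartz propagates it to $L^2$. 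Given $\eta > 0$ small (to be fixed), pick $T = T(f,\eta)$ with $\|S(\cdot)f\|_{L^6([0,T]\times\R)} \le \eta$ and set up the fixed-point argument on
\[
X_{T,\eta} = \bigl\{ u \in C([0,T], L^2) \cap L^6([0,T]\times\R) : \|u\|_{L^\infty L^2} \le 2C\|f\|_{L^2},\ \|u\|_{L^6_{t,x}} \le 2\eta \bigr\},
\]
endowed with the $L^6_{t,x}$-distance. Applying Strichartz and its dual to the Duhamel map
\[
\Phi(u)(t) = S(t)f - i\int_0^t S(t-s)\bigl[A_0(u)u + \alpha|u|^4 u\bigr](s)\,ds
\]
yields $\|\Phi(u)\|_{L^6_{t,x}} \le \eta + C(2\eta)^5 + (\text{nonlocal contribution})$, where the nonlocal term is controlled by the convolution estimates used in the proof of Theorem \ref{11} and, for $\sigma\in I_5$, carries a positive power of $T$ so that it becomes harmless after shrinking $T$ further. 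For $\eta$ small enough $\Phi$ self-maps $X_{T,\eta}$ and is a strict contraction; Banach's theorem produces the unique fixed point. The same argument on $[-T',0]$ handles negative times, and one final Strichartz bound applied to the fixed point upgrades membership to $L^q([-T',T]; L^r)$ for every admissible pair.

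\textbf{Continuation and main obstacle.} Define $T_{\max}(f)$ as the supremum of $T$ for which the above construction extends to $[0,T]$, and $T_{\min}(f)$ analogously; uniqueness on the whole maximal interval follows from the fact that any two solutions with the same datum coincide on a small initial interval by the fixed-point construction and a standard propagation/continuation argument. If one had $T_{\max}<\infty$ and $\|\Psi\|_{L^6([0,T_{\max})\times\R)}<\infty$, then absolute continuity of the Lebesgue integral would provide $t_0$ close to $T_{\max}$ for which the Duhamel identity forces $\|S(\cdot - t_0)\Psi(t_0)\|_{L^6([t_0,T_{\max}+\delta]\times\R)}$ to be small, so the local theory restarted at $t_0$ would extend $\Psi$ past $T_{\max}$ --- a contradiction, yielding the blow-up alternative $\|\Psi\|_{L^6([0,T_{\max})\times\R)} = \infty$. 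The sole genuine obstruction beyond Theorem \ref{21} is precisely the loss of a conservation-based global extension: the conserved mass $\|\Psi(t)\|_{L^2} = \|f\|_{L^2}$ sits exactly at the scaling of the quintic term, so it does not control the critical Strichartz norm and cannot be iterated to yield a global solution for arbitrary $f$. Only for data below a universal threshold does the global Strichartz bound $\|S(t)f\|_{L^6_{t,x}(\R)} \lesssim \|f\|_{L^2}$ itself furnish the smallness required by the contraction on all of $\R$, which is the small-data global existence mentioned after the statement.
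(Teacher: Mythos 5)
Your proposal follows essentially the same route as the paper's proof of Theorem \ref{lecld}: the smallness driving the contraction comes not from a power of $T$ but from making $\|S(\cdot)f\|_{L^{q}([0,T],L^{r})}<\delta$ for $T$ small (the paper invokes absolute continuity of the Lebesgue integral, you invoke density of nice data plus dominated convergence --- the same mechanism), the quintic term is handled at criticality with no $T$-gain, the Hartree term is subcritical and contributes a factor $T^{(1+\sigma)/2}$, and the maximal interval is obtained by iterating the local construction. One concrete point needs repair: the diagonal pair $(6,6)$ is not compatible with the single-norm treatment of the nonlocal term that the paper (and your appeal to ``the convolution estimates used in the proof of Theorem \ref{11}'') relies on. Step 1 of Theorem \ref{lex} requires $r\le 6/(1+2\sigma)$ so that the dual exponent $1/\tilde r'=3/r-\sigma$ satisfies $\tilde r'\le 2$; with $r=6$ one gets $\tilde r'=2/(1-2\sigma)>2$ for every $\sigma>0$, so $[\,|\cdot|^{-(1-\sigma)}*|\Psi|^2]\Psi$ does not land in a dual admissible space when measured only through $\|\Psi\|_{L^6_{t,x}}$. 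This is why $I_5=(0,1/10]$ and why the paper's working exponent is $r\in[5,6/(1+2\sigma)]$ (for which the quintic dualizes through $\tilde r_1'=r/5$ rather than $6/5$). The fix is either to adopt that pair throughout, or to keep $(6,6)$ for the critical term and estimate the Hartree term by interpolating against the $L^\infty L^2$ component of your space $X_{T,\eta}$; either way the architecture of your argument survives unchanged. Your discussion of the continuation criterion via the critical Strichartz norm is more detailed than the paper's (which simply asserts the maximal interval) and is a worthwhile addition.
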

 \begin{thm}\label{4}Let $f\in L^2(\R)$, $\sigma\in I_5$ and $\gamma=5$. There exists a small $\delta$ such that, if $\|f\|_{L^2}\leq \delta$, then the Cauchy problem \eqref{1}-\eqref{2} has a unique global solution $\Psi \in  C(\R,L^2(\R))\cap L^q(\R,L^r(\R))$, for any $(q,r)$ admissible pair.
 \end{thm}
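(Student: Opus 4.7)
The plan is to run a contraction mapping argument on the whole time line $\R$, using the global-in-time Strichartz estimates and the smallness of $\|f\|_{L^2}$ to close the critical quintic nonlinearity. The scheme parallels the proof of Theorem \ref{11}, except that now the shrinking factor will come from the radius of the ball rather than from the length of the time interval. Concretely, I would fix an admissible pair $(q,r)$ suited to handle simultaneously the local and the nonlocal terms as in Theorem \ref{11} for $\gamma=5$ and $\sigma\in I_5$; the $L^2$-critical Strichartz pair $(q,r)=(6,6)$ in one dimension is the natural reference, since it realizes the exact Hölder identity $\||\Psi|^{4}\Psi\|_{L^{q'}_tL^{r'}_x}=\|\Psi\|_{L^q_tL^r_x}^{5}$. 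Then I would set up the Duhamel map
$$\Phi(\Psi)(t):=S(t)f-i\int_{0}^{t}S(t-s)\bigl[A_0(\Psi(s))\Psi(s)+\alpha|\Psi(s)|^{4}\Psi(s)\bigr]\,ds$$
on the closed ball
$$X_\rho:=\{\Psi\in C(\R;L^2)\cap L^q(\R;L^r)\colon\|\Psi\|_{L^\infty_tL^2_x}+\|\Psi\|_{L^q_tL^r_x}\leq\rho\},$$
which is a complete metric space under the induced distance.

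Next, applying the global-in-time Strichartz inequality on $\R$, I would estimate
\begin{align*}
\|\Phi(\Psi)\|_{L^\infty_tL^2_x}+\|\Phi(\Psi)\|_{L^q_tL^r_x}
&\leq C\|f\|_{L^2}+C\|A_0(\Psi)\Psi\|_{L^{q'}_tL^{r'}_x}+C\|\Psi\|_{L^q_tL^r_x}^{5}\\
&\leq C\delta+C\rho^{\theta}+C\rho^{5},
\end{align*}
for some exponent $\theta>1$, where the nonlocal bound comes from the same fractional-integration/convolution argument used in Theorem \ref{11}, now applied directly over all of $\R$. The analogous computation for $\Phi(\Psi_1)-\Phi(\Psi_2)$ yields a Lipschitz bound of order $O(\rho^{\theta-1}+\rho^{4})$. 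Choosing $\rho=2C\delta$ and then $\delta$ small enough so that $C\rho^{\theta}+C\rho^{5}\leq \rho/2$ and $C(\rho^{\theta-1}+\rho^{4})<1$, Banach's fixed point theorem produces a unique $\Psi\in X_\rho$ solving the integral equation. The full Strichartz bound on every other admissible pair follows a posteriori by applying the inhomogeneous Strichartz estimate once more to the fixed point.

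The main obstacle is to verify that the nonlocal estimate on $A_0(\Psi)\Psi$ used in Theorem \ref{11} truly transfers to the global-in-time setting without producing any factor depending on $T$. This is exactly what forces $\sigma\in I_5$: writing $A_0=c(-\Delta)^{-\sigma/2}|\Psi|^{2}$ and invoking Hardy--Littlewood--Sobolev together with Hölder in space and time, one reduces to an algebraic identity among the Strichartz exponents that has to balance on both sides without any leftover Hölder-in-time slack, so that the bound
$$\|A_0(\Psi)\Psi\|_{L^{q'}(\R;L^{r'})}\leq C\|\Psi\|_{L^q(\R;L^r)}^{\theta}$$
is genuinely time-global. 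Once this scale-invariant bound is in hand, uniqueness in $C(\R;L^2)\cap L^q(\R;L^r)$ is immediate from the contraction, and the global solution automatically belongs to every other Strichartz space by the a posteriori argument above.
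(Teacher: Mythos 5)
Your overall scheme (a small-data contraction in which the quintic term is closed by the radius of the ball rather than by the length of the time interval) is the right idea for the \emph{local} nonlinearity, but the argument has a genuine gap exactly at the point you flag as ``the main obstacle'': the time-global, scale-invariant bound
$$\|A_0(\Psi)\Psi\|_{L^{\tilde q'}(\R;L^{\tilde r'})}\leq C\|\Psi\|^{3}_{L^{q}(\R;L^{r})}$$
does not hold for any choice of admissible pairs. The nonlocal term $C(\sigma)\bigl[|\cdot|^{-(1-\sigma)}*|\Psi|^2\bigr]\Psi$ with $\sigma\in(0,1)$ is strictly $L^2$-subcritical (its scaling regularity is negative), so the Hardy--Littlewood--Sobolev/H\"older bookkeeping cannot balance in time: putting the two convolved factors in $L^{q_\rho}_tL^{\rho}_x$ and the outer factor in $L^{q_1}_tL^{r_1}_x$, the spatial identity reads $\frac{1}{\tilde r'}=\frac{2}{\rho}+\frac{1}{r_1}-\sigma$, while admissibility of all pairs forces the temporal identity $\frac{1}{2\tilde r}=-\frac{1}{\rho}-\frac{1}{2r_1}$, which has no solution with nonnegative exponents. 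Equivalently, the estimate inevitably produces a factor $T^{\frac{1+\sigma}{2}}$ --- precisely the factor appearing in Step 2 of Theorem \ref{lex} and again in Theorems \ref{lecld} and \ref{gecsd} --- and a positive power of $T$ cannot be absorbed on an infinite time interval. So your map $\Phi$ is not even bounded on the ball $X_\rho\subset L^q(\R;L^r)$, and the single global-in-time contraction does not close.

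The paper's proof sidesteps this by working on a finite interval: with $M=3C\delta$ the critical quintic contributes $C M^{5}\ll M$ by smallness of $\delta$ alone (uniformly in $T$), while the subcritical Hartree term contributes $C T^{\frac{1+\sigma}{2}}M^{3}$, which is handled by choosing $T=T(\delta)$. Since this existence time depends only on the $L^2$-norm of the datum and the mass $\|\Psi(t)\|_{L^2}=\|f\|_{L^2}\leq\delta$ is conserved (Lemma \ref{conservationmass}), the local solution can be iterated on intervals of fixed length $T(\delta)$ to yield the global solution. If you want to retain a genuinely global-in-time functional setting, you would have to treat the Hartree term perturbatively over a partition of $\R$ into intervals on which $T^{\frac{1+\sigma}{2}}$ is controlled, not by a single scale-invariant Strichartz estimate.
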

 Next we would like to perform the same previous result with initial data in $H^1(\R)$. The $H^1(\R)$ theory distinguishes the defocusing case ($\alpha=1$) and the focusing case ($\alpha=-1$). In particular the second case is more delicate. We have the following results: 
 \begin{thm}\label{5}
 Let  $f\in H^1(\R)$, $\sigma\in I_\gamma$, $\gamma\neq 5$ and $\alpha=\pm 1$. \\
 Then, there exists a time $T=T(\|f\|_{H^1})>0$, such that one can find a unique wave function $\Psi$, 
 $$
 \Psi\colon[0,T]\times \R\to \C,
 $$
 solution of the Cauchy problem \eqref{1}-\eqref{2} and
 \begin{equation*}\label{}
 \Psi \in C([0,T],H^1(\R))\cap L^q([0,T],W^{1,r}(\R)),
 \end{equation*}
 for any $(q,r)$ admissible pair.
 \end{thm}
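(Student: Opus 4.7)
The plan is to rerun the contraction-mapping argument used for Theorem~\ref{11}, but now carrying one derivative. I would work on the Duhamel map
\begin{equation*}
\Phi(\Psi)(t) = S(t)f - i\int_0^t S(t-s)\bigl[A_0(\Psi(s))\Psi(s) + \alpha|\Psi(s)|^{\gamma-1}\Psi(s)\bigr]\,ds
\end{equation*}
on a closed ball of
\begin{equation*}
X_T = C\bigl([0,T],H^1(\R)\bigr)\cap L^q\bigl([0,T],W^{1,r}(\R)\bigr),
\end{equation*}
with $(q,r)$ the admissible pair already selected in the proof of Theorem~\ref{11}, and measure the distance in the weaker space $C([0,T],L^2)\cap L^q([0,T],L^r)$ so as not to run into the limited smoothness of $u\mapsto|u|^{\ga-1}u$ for non-integer $\ga$. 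The sign $\al=\pm1$ plays no role at the level of local existence, so the focusing and defocusing cases are treated uniformly.

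First I would apply Strichartz estimates to $\Phi(\Psi)$ and to $\nabla\Phi(\Psi)$ to reduce matters to bounding the two nonlinearities in dual Strichartz norms. For the nonlocal piece, the Leibniz rule gives $\nabla(A_0\Psi)=(\nabla A_0)\Psi + A_0\nabla\Psi$. Writing $A_0 = c(-\De)^{-\si/2}(|\Psi|^2)$ as a Riesz potential and using the same Hardy--Littlewood--Sobolev/convolution machinery exploited for Theorem~\ref{11}, I would estimate
\begin{equation*}
\|A_0\,\nabla\Psi\|_{L^{q'}_tL^{r'}_x}+\|(\nabla A_0)\Psi\|_{L^{q'}_tL^{r'}_x} \lesssim T^{\theta}\,\|\Psi\|_{X_T}^3,
\end{equation*}
the only genuinely new object being the fractional operator $\nabla A_0 \sim (-\De)^{(1-\si)/2}(|\Psi|^2)$, of order $1-\si<1$, which can be absorbed in the same range $I_\ga$ because the $L^2$ computation left some slack in the Sobolev exponents.

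For the local nonlinearity, the standard $H^1(\R)$ NLS technique applies: since in one dimension $H^1\hookrightarrow L^\infty$, one obtains
\begin{equation*}
\bigl\|\nabla(|\Psi|^{\ga-1}\Psi)\bigr\|_{L^{\tilde q'}_tL^{\tilde r'}_x}\lesssim T^{\theta'}\|\Psi\|_{X_T}^\ga
\end{equation*}
uniformly in $1<\ga<5$ (for non-integer $\ga$ fractional chain-rule estimates are invoked). Summing the above contributions and choosing $T$ small depending on $\|f\|_{H^1}$ yields self-mapping and contraction of $\Phi$ on a ball of $X_T$; the unique fixed point is the desired solution. Continuity in $H^1$ and membership in every admissible $L^qL^r$ follow by the usual Strichartz bootstrap, and uniqueness in the full $H^1$ class follows by persistence of regularity applied to any two candidate solutions.

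The main difficulty, as in Theorem~\ref{11}, is not the local term but the nonlocal one: one must verify that $\nabla A_0$, a fractional operator of order $1-\si$ applied to $|\Psi|^2$, can be controlled in the dual Strichartz norm using the \emph{same} admissible pair that closes the local nonlinearity. The interval $I_\ga$ was built precisely to leave enough room for this simultaneous closure at the $L^2$ level, so that passing to $H^1$ amounts essentially to distributing the derivative via Leibniz and checking that each piece remains within the same exponent window; this is where I expect the bookkeeping to be heaviest.
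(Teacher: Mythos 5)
Your strategy is the same as the paper's: a contraction on a ball of $L^\infty([0,T],H^1)\cap L^q([0,T],W^{1,r})$ with the admissible pair already fixed in the $L^2$ theory, distributing one derivative over each nonlinearity via Leibniz and reusing the Strichartz/H\"older/Hardy--Littlewood--Sobolev scheme of Theorem~\ref{11}. The one point where your write-up does not close is precisely the one you flag as ``heaviest'': you treat $\nabla A_0$ as $(-\Delta)^{(1-\sigma)/2}(|\Psi|^2)$, a fractional operator of \emph{positive} order $1-\sigma$ acting on $|\Psi|^2$, and then assert it ``can be absorbed in the same range $I_\gamma$ because the $L^2$ computation left some slack.'' As stated this is not an estimate: a positive-order operator is not a Riesz potential, so Hardy--Littlewood--Sobolev no longer applies, and you would need fractional Leibniz rules and Riesz-transform bounds on the relevant $L^{r/2}$ spaces, together with a verification (which you do not give) that the resulting exponents still fit inside $I_\gamma$. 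The paper avoids all of this by commuting the gradient onto the density rather than the kernel:
\begin{equation*}
\nabla\bigl(|\cdot|^{-(1-\sigma)}*|\Psi|^2\bigr)=|\cdot|^{-(1-\sigma)}*\nabla(|\Psi|^2)
=2\,|\cdot|^{-(1-\sigma)}*\mathrm{Re}\bigl(\bar\Psi\,\nabla\Psi\bigr),
\end{equation*}
which is again a Riesz potential of an $L^{r/2}$ function (by H\"older, since $\Psi,\nabla\Psi\in L^r$), so the \emph{identical} HLS and H\"older exponents from the $L^2$ proof apply with one factor $\|\Psi\|_{L^r}$ replaced by $\|\nabla\Psi\|_{L^r}$. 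With this identity the ``new object'' disappears and no extra slack in $I_\gamma$ is needed; you should replace your $(-\Delta)^{(1-\sigma)/2}$ formulation by this one. The remaining ingredients of your argument (the pointwise bound $|\nabla(|\Psi|^{\gamma-1}\Psi)|\lesssim|\Psi|^{\gamma-1}|\nabla\Psi|$, contraction in the weaker $L^2$-level metric to sidestep the limited smoothness of $u\mapsto|u|^{\gamma-1}u$ for non-integer $\gamma$) are correct, and the last of these is actually more careful than the paper, which performs the difference estimates at the $W^{1,r}$ level without comment.
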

 \begin{thm}\label{6}
 Let $f\in H^1(\R)$, $\sigma\in I_\gamma$, $\gamma\neq 5$ and $\alpha=-1$. \\
 Then, the (FSPS) system has a unique global solution $\Psi \in  C(\R,H^1(\R))\cap L^q(\R,W^{1,r}(\R))$, for any $(q,r)$ admissible pair.\\
 Otherwise, if $\gamma=5$, there exists $\delta>0$ such that, if $\|f\|_{L^2}<\delta$ then the Cauchy problem \eqref{1}-\eqref{2} has a unique global solution $\Psi \in  C(\R,H^1(\R))\cap L^q(\R,W^{1,r}(\R))$, for any $(q,r)$ admissible pair.
 \end{thm}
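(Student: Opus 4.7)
The plan is to combine the local theory of Theorem \ref{5}, which produces an $H^1$-solution on a time interval depending only on $\|f\|_{H^1}$, with uniform \emph{a priori} bounds obtained from the two conserved quantities associated with \eqref{1}--\eqref{2}: the mass $M(\Psi)=\|\Psi\|_{L^2}^2$ and the energy
$$
E(\Psi)=\tfrac14\|\partial_x\Psi\|_{L^2}^2+\tfrac12\int_\R A_0\,|\Psi|^2\,dx+\tfrac{\alpha}{\gamma+1}\int_\R|\Psi|^{\gamma+1}\,dx.
$$
I would first derive these conservation laws for smooth solutions by multiplying \eqref{1} by $\bar\Psi$ and by $\partial_t\bar\Psi$ and integrating by parts, then pass to the $H^1$ level by approximating $f$ by a sequence $f_n$ of more regular data and invoking the continuous dependence on the initial datum built into Theorem \ref{5} to take limits.

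For the subcritical focusing case $\gamma<5$, $\alpha=-1$, the key observation is that the nonlocal term is \emph{nonnegative}: by Plancherel,
$$
\int_\R A_0\,|\Psi|^2\,dx=\bigl\|(-\Delta)^{-\sigma/4}|\Psi|^2\bigr\|_{L^2}^2\ge 0,
$$
so combining this with conservation of $E$ and $M$ gives
$$
\tfrac14\|\partial_x\Psi(t)\|_{L^2}^2\le E(f)+\tfrac{1}{\gamma+1}\|\Psi(t)\|_{L^{\gamma+1}}^{\gamma+1}.
$$
Plugging in the one-dimensional Gagliardo--Nirenberg inequality
$$
\|\Psi\|_{L^{\gamma+1}}^{\gamma+1}\le C\,\|\Psi\|_{L^2}^{(\gamma+3)/2}\|\partial_x\Psi\|_{L^2}^{(\gamma-1)/2}
$$
and using that $(\gamma-1)/2<2$, Young's inequality absorbs the nonlinear contribution into the kinetic term and yields $\|\partial_x\Psi(t)\|_{L^2}\le K(\|f\|_{H^1})$ uniformly in $t$. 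Iterating Theorem \ref{5} on successive intervals of length $T(K(\|f\|_{H^1}))$ then extends the solution to all of $\R$.

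In the critical case $\gamma=5$ the Gagliardo--Nirenberg exponent becomes $(\gamma-1)/2=2$, and the same computation only gives
$$
\tfrac14\|\partial_x\Psi(t)\|_{L^2}^2\le E(f)+\tfrac{C_{\mathrm{opt}}}{6}\|f\|_{L^2}^4\,\|\partial_x\Psi(t)\|_{L^2}^2,
$$
so the kinetic term can be absorbed only when $\|f\|_{L^2}$ is smaller than the threshold $\delta$ determined by the sharp constant $C_{\mathrm{opt}}$; by mass conservation this smallness is preserved in time and gives again a uniform $H^1$-bound. Both bounds feed into the standard blow-up alternative derived from Theorem \ref{5} to produce a solution on all of $\R$ in the claimed functional setting. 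I expect the main obstacle to be the rigorous justification of the energy identity at $H^1$ regularity, and in particular the time differentiation of the nonlocal term $\int A_0|\Psi|^2\,dx$; this has to be done via the regularization procedure described above, using the mapping properties of $(-\Delta)^{-\sigma/2}$ and the Riesz-potential estimates already established in the earlier sections of the paper.
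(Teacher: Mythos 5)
Your proposal follows essentially the same route as the paper: conservation of mass and energy, positivity of the nonlocal interaction term $\int_\R A_0|\Psi|^2\,dx$ (which the paper uses implicitly by simply dropping it, while you justify it via Plancherel), the one-dimensional Gagliardo--Nirenberg inequality to absorb the focusing potential energy into the kinetic term for $\gamma<5$ and under a sharp-constant mass threshold for $\gamma=5$, and finally the blow-up alternative to globalize. The only (harmless) discrepancy is the coefficient of the nonlocal term in the energy, which the paper takes as $\tfrac14$ rather than $\tfrac12$; since only the sign of that term is used, the argument is unaffected.
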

 
 Our
 plan
 in
 this
 paper
 is
 as
 follows. In Section $2$ we introduce some notations and basic fact about LS and NLS: decay estimates, Strichartz estimates, NLS well-posedness results in $L^2$ and $H^1$. The Section $3$ is devoted to well posed problem of (FSPS) system: at first we treat the $L^2$ theory (proof of the Theorems \eqref{11}, \eqref{21}, \eqref{3}, \eqref{4}) and then the $H^1$ theory (proof of the Theorems \eqref{5}, \eqref{6}). In the Section $4$, we establish some decay estimates for the solution of (FSPS): we control the $L^4 L^\infty$-norm of the solution with initial data in $L^2(\R)$ and cubic nonlinearity. Lastly, we get a control estimate for the speed of the oscillation of the solution with initial data in $H^1(\R)$.\\
 
 \textbf{Acknoledgment.} It is a pleasure to acknowledge the interesting conversations about the one dimension Maxwell-Schr\"odinger system \cite{tsutsumi95} with T. Ozawa.\\    
 The author has been supported by  Comenius project "Dynamat" 2010, Universit\`{a} di Pisa and FIRB "Dinamiche Dispersive: Analisi di Fourier e Metodi Variazionali" 2012. 

\section{Preliminaries}
We first introduce some notations.\\
Let $\varphi\in \mathscr S(\R^n)$, a Schwartz function. We define the Fourier transform of $\varphi$ and its inverse as follows:
\begin{align*}
\mathscr F [\varphi](\xi)&= \hat \varphi (\xi)= \frac{1}{(2\pi)^{n/2}}\int_{\R^n}\eu^{-ix\cdot\xi}\varphi(x)\,dx,\\
\mathscr F^{-1} [\varphi](x)&= \check \varphi(x)= \frac{1}{(2\pi)^{n/2}}\int_{\R^n}\eu^{ix\cdot\xi}\varphi(\xi)\,d\xi,
\end{align*}
and then we can extend this operator on tempered distribution $S'(\R^n)$.

The fractional Laplacian $(-\Delta)^{\sigma/2}$ is a pseudo-differential operator defined as: 
\begin{equation}\label{fraclapl}
(-\Delta)^{\sigma/2}A(x)= \mathscr F^{-1}[|\xi|^{\sigma}\hat{A}](x),
\end{equation}
with $A\in S'(\R^n) $
and $0<\sigma<n$. One can see Stein \cite{stein} for a detailed theory on Riesz potentials.

In this work we will consider the Lebesgue spaces $L^p(\R)$, the Sobolev spaces $H^s(\R)$,  and some Bochner spaces, $L^q([0,T],L^r(\R))$, $L^q([0,T],W^{1,r}(\R))$ and $C([0,T],L^2(\R))$. \\
For the Borel-mesaurable functions $g(t,x)\colon [0,T]\times\R\to\C$, $f(x)\colon \R\to\C$, we define the norms of the spaces listed above:
\begin{gather*}
\|f\|_{L^p} =\left(\int_\R |f|^p\,dx \right)^{1/p}, 1\leq p<\infty;\\
\|f\|_{L^\infty} =\supess_\R |f| ;\\
\|f\|_{H^s} = \|\mathscr F^{-1}[ \langle \xi \rangle^s\hat f]\|_{L^2}= \|\langle \xi \rangle^s\hat f\|_{L^2}, \,\,\,\,s\in\R;\\
\|g\|_{L^q([0,T],L^r(\R))} =\left(\int_{0}^{T}\|g(t)\|_{L^r}^{q} \,dt\right)^{1/q};\,\, 1\leq q,r<\infty\\
\|g\|_{L^q([0,T],W^{1,r}(\R))} =\left(\int_{0}^{T}\|g\|_{W^{1,r}}^{q} \,dt\right)^{1/q};\,\, 1\leq q,r<\infty\\
\|g\|_{C([0,T],L^r(\R))} = \sup_{[0,T]} \|g(t,\cdot)\|_{L^r}, \,\, 1\leq r\leq\infty.
\end{gather*}

\subsection{Linear estimates of the free Schr\"odinger equation}
We introduce some basic fact about linear Schr\"odinger equation
\begin{subnumcases}
{(LS)}
i\partial_t\Psi+\frac{1}{2}\Delta\Psi =0 \notag\\
 \Psi(0,\cdot)=f, \notag
 \end{subnumcases}
 If $f\in S(\R^n)$, the Cauchy problem (LS) has a unique solution, $$\Psi(t)=S(t)f,$$ where $$S(t)=\eu^{i\frac{\Delta}{2}t}\colon S(\R^n)\to S(\R^n),$$ is defined by Fourier transform:
 \begin{equation*}
 S(t)f= \mathscr F^{-1}(\eu^{-i\frac{|\xi|^2}{2}t}\hat f).
 \end{equation*}
 By duality we can extend $S(t)$ to $S'(\R^n)$.\\
 In addition, by the proprieties of Fourier transform in $S'(\R)$, we can rewrite the solution $\Psi$ as following
 \begin{equation*}
 \Psi(t)= S(t)f=  \mathscr F^{-1}(\eu^{-i\frac{|\xi|^2}{2}t})* f= \frac{1}{(2\pi i t)^{n/2}}\eu^{i\frac{|\cdot|^2}{2t}}*f.
 \end{equation*}
 We summarize the \emph{time-dispersion} estimates of linear Schr\"odinger in the following Lemma.
\begin{lem}
Let $2 \leq p \leq \infty$, $\frac{1}{p}+\frac{1}{p'}= 1$. There exists $C>0$, such that, for all $f\in L^1 \cap L^2$,
\begin{align*}
\|S(t)f\|_{L^p} & \leq C \frac{1}{t^{n/2-n/p}}\|f\|_{L^{p'}}.
\end{align*}
\end{lem}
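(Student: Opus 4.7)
The plan is a classical two-endpoint argument followed by Riesz--Thorin interpolation.

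First I would establish the endpoint $p=\infty$, $p'=1$. The explicit kernel representation displayed just above the lemma,
$$
S(t)f(x)=\frac{1}{(2\pi i t)^{n/2}}\int_{\R^n}\eu^{i|x-y|^2/(2t)}f(y)\,dy,
$$
which is valid for $f\in \mathscr{S}(\R^n)$ by direct computation of the Fourier inverse of a Gaussian, extends to $f\in L^1(\R^n)$ by density (the integral converges absolutely). Pulling the absolute value inside the integral and using $|\eu^{i|x-y|^2/(2t)}|=1$ gives immediately
$$
\|S(t)f\|_{L^\infty}\leq \frac{1}{(2\pi t)^{n/2}}\|f\|_{L^1},
$$
which is precisely the claim at $p=\infty$ (the exponent $n/2-n/\infty=n/2$ matches).

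Second I would establish the endpoint $p=2$, $p'=2$. Here the Fourier-side definition $\widehat{S(t)f}(\xi)=\eu^{-i|\xi|^2 t/2}\hat f(\xi)$ together with Plancherel gives $\|S(t)f\|_{L^2}=\|f\|_{L^2}$, since the multiplier has modulus one. This is the claim at $p=2$ with constant $1$, and the decay exponent is $n/2-n/2=0$.

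Finally, for a fixed $t>0$, the linear operator $S(t)$ is bounded $L^1\to L^\infty$ with norm $\leq (2\pi t)^{-n/2}$ and bounded $L^2\to L^2$ with norm $1$. Applying the Riesz--Thorin complex interpolation theorem with $\tfrac{1}{p}=\tfrac{\theta}{\infty}+\tfrac{1-\theta}{2}$, i.e.\ $\theta=1-\tfrac{2}{p}$ (so that the dual exponent is $p'$), yields
$$
\|S(t)f\|_{L^p}\leq \Big(\frac{1}{(2\pi t)^{n/2}}\Big)^{\theta}\cdot 1^{1-\theta}\|f\|_{L^{p'}}= \frac{C}{t^{n/2-n/p}}\|f\|_{L^{p'}}
$$
for every $2\leq p\leq\infty$, with $C=(2\pi)^{-(n/2-n/p)}$. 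I expect no real obstacle here; the only minor point is to check that both endpoint bounds indeed hold on the common dense subspace $L^1\cap L^2$ (which is guaranteed by the hypothesis of the lemma), so Riesz--Thorin applies without any additional density argument.
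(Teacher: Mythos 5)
Your argument is correct: the $L^1\to L^\infty$ bound from the explicit Gaussian kernel, the $L^2$ isometry from Plancherel, and Riesz--Thorin interpolation (with $\theta=1-2/p$, which indeed pairs $L^p$ with $L^{p'}$ and produces the decay rate $t^{-(n/2-n/p)}$) constitute the standard proof of this dispersive estimate. The paper itself states this lemma without proof, as a classical preliminary (it is the textbook argument, e.g.\ in Cazenave), so there is nothing in the paper to diverge from; your write-up fills the gap correctly.
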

\begin{remark}
The Schr\"odinger group, $S(t)$, generates a dispersive effect on initial data, i.e, the initial pulse spreads out after a while because of plane waves with large wave
number travel faster than those with a smaller one.
\end{remark}

Now we summarize the decay estimates for the linear nonhomogeneous Schr\"odinger equation 
\begin{subnumcases}
{\,}
i\partial_t\Psi+\frac{1}{2}\Delta\Psi =F(t,x) \,\,\,\,\,\,(t,x)\in \R\times \R\notag\\
 \Psi(0,\cdot)=f, \notag
 \end{subnumcases}
 in the following Lemma:
 \begin{lem}
 Let $2\leq p\leq \infty $, $f\in L^{p'}$ and $F\in {L^\infty [(0,T),L^{p'}]}$ for $T>0$. Then there exists a constant $C=C(p)>0$ such that 
 \begin{equation*}
 \|\Psi(t)\|_{L^p}\leq t^{-1/2+1/p}\|f\|_{L^{p'}}+C \int_{0}^{t}(t-s)^{-1/2+1/p}\|F(s)\|_{L^{p'}}\,ds,
 \end{equation*} 
 for $t\in (0,T)$.
 \end{lem}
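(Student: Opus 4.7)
The plan is a direct Duhamel argument combined with the time-dispersion estimate of the preceding Lemma, specialized to $n=1$ (so that $n/2-n/p = 1/2-1/p$). First, I would write the solution via Duhamel's formula
\begin{equation*}
\Psi(t) = S(t)f - i\int_0^t S(t-s) F(s)\, ds,
\end{equation*}
which is verified by differentiating in $t$ and using that $i\partial_t S(t) = -\tfrac12 \Delta S(t)$ together with $S(0) = \mathrm{Id}$ (this is where the linearity and semigroup property of $S(t)$ enter).

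Next, I would take the $L^p$-norm of both sides, apply the triangle inequality together with Minkowski's integral inequality, and bound each term using the preceding dispersive Lemma:
\begin{equation*}
\|S(t)f\|_{L^p} \leq C\, t^{-1/2+1/p}\|f\|_{L^{p'}}, \qquad \|S(t-s)F(s)\|_{L^p} \leq C\, (t-s)^{-1/2+1/p}\|F(s)\|_{L^{p'}}.
\end{equation*}
Combining these yields the stated estimate, with the constant $C=C(p)$ inherited directly from the homogeneous case. Integrability of the kernel $(t-s)^{-1/2+1/p}$ near $s=t$ is automatic, since $-1/2+1/p > -1$ for every $p\geq 2$, so the integral is finite and the bound for $\|F\|_{L^{\infty}([0,T),L^{p'})}$ pulls through.

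The only subtle point is that the preceding Lemma was written for $f\in L^1\cap L^2$, whereas here one only assumes $f\in L^{p'}$ (and pointwise-in-$s$ only $F(s)\in L^{p'}$). I would handle this by density and Riesz--Thorin interpolation between the $L^1\to L^\infty$ bound $\|S(t)g\|_{L^\infty}\leq (2\pi|t|)^{-1/2}\|g\|_{L^1}$, which is immediate from the explicit kernel $(2\pi it)^{-1/2}\eu^{i|\cdot|^2/(2t)}$, and the $L^2$-isometry $\|S(t)g\|_{L^2}=\|g\|_{L^2}$. No serious obstacle arises: the dispersive content is already packaged in the previous Lemma, and this statement is essentially Duhamel plus Minkowski.
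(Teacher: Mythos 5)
Your proposal is correct: Duhamel's formula, the triangle/Minkowski inequality, and the one-dimensional dispersive bound $\|S(t)g\|_{L^p}\leq C\,|t|^{-1/2+1/p}\|g\|_{L^{p'}}$ (obtained by Riesz--Thorin between the explicit $L^1\to L^\infty$ kernel bound and the $L^2$ isometry) give exactly the stated estimate, and your remark that $-1/2+1/p>-1$ guarantees local integrability of the kernel is the right check. The paper states this lemma without proof, as a summary of standard decay estimates, and the argument it implicitly invokes is precisely the one you give, so there is nothing to reconcile.
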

  These dispersive estimates are remarkable but is not quite handy for solving the nonlinear problems. In a perturbative regime we need to space-time estimates. We begin by introducing the notion of admissible pair.
 \begin{definition}
 We say that a pair $(q,r)$, is admissible if
 \begin{equation*}\label{admiss}
 \frac{2}{q}=\frac{n}{2}-\frac{n}{r},
 \end{equation*}
 and 
 \begin{align*}
 2\leq  r \leq \frac{2n}{n-2} \,\,\,&\text{ if } n\geq 3, \\
 2\leq   r <\infty \,\,\,&\text{ if } n=2 ,\\
 2\leq   r \leq \infty \,\,\,&\text{ if } n=1.
 \end{align*}
 \end{definition}
 \begin{remark}
 Scaling argument for Strichartz estimates say us that these restrictions on the pair $(q,r)$ are necessary. The pairs $(2,\frac{2n}{n-2})$, $n\geq 3$, are called \emph{endpoint}.
 \end{remark}
\begin{thm}[Strichartz's estimates]
Let $(q,r)$, $(\tilde q,\tilde r)$ be two Schr\"odinger admissible pairs. Then, the following estimates hold: 
\begin{gather}
\| S(t)f\|_{L^q(\R ,L^r(\R^n))}\leq C \|f\|_{L^2(\R^n)},\label{1str}\\
\| \int_{\R}S^{*}(t)F(t)\,dt\|_{L^2(\R^n)}\leq C \|F\|_{L^{\tilde q'}(\R ,L^{\tilde r'}(\R^n))},\label{2str}\\
\|\int_{0}^{t}S(t-s)F(s)\,ds \|_{L^q_t(\R ,L^r_x(\R))}\leq C \| F \|_{L^{\tilde q'}(\R ,L^{\tilde r'}(\R^n))}\label{3str}.
\end{gather}
With $S^{*}(t)=\eu^{-i\frac{\Delta}{2}t}$ we denote the adjoint of $S(t)= \eu^{i\frac{\Delta}{2}t}$.
\end{thm}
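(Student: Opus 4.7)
The plan is to derive \eqref{1str}--\eqref{3str} from the time-dispersion estimate of the previous Lemma via the classical $TT^{*}$ argument of Ginibre--Velo and Kato. The first observation is that \eqref{1str} and \eqref{2str} are dual to each other: introducing the operator $T\colon L^{2}(\R^{n})\to L^{q}(\R,L^{r}(\R^{n}))$ defined by $Tf=S(\cdot)f$, the bound \eqref{1str} reads $\|T\|\le C$ and \eqref{2str} reads $\|T^{*}\|\le C$. Both assertions are in turn equivalent to the boundedness of the composition
\begin{equation*}
TT^{*}F(t)=\int_{\R}S(t-s)F(s)\,ds\colon L^{q'}(\R,L^{r'})\longrightarrow L^{q}(\R,L^{r}),
\end{equation*}
so it suffices to establish this single bilinear bound and read the other two off by duality.

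The next step is to estimate $TT^{*}$ by inserting the dispersive estimate inside the time integral. Applying it pointwise in $t$ yields
\begin{equation*}
\|TT^{*}F(t)\|_{L^{r}}\le C\int_{\R}|t-s|^{-\tfrac{n}{2}+\tfrac{n}{r}}\|F(s)\|_{L^{r'}}\,ds,
\end{equation*}
and the admissibility relation $\tfrac{2}{q}=\tfrac{n}{2}-\tfrac{n}{r}$ identifies the kernel as $|t-s|^{-2/q}$. The Hardy--Littlewood--Sobolev inequality on $\R$ then upgrades the scalar convolution $\|F(\cdot)\|_{L^{r'}}\mapsto \|TT^{*}F(\cdot)\|_{L^{r}}$ to an $L^{q'}\to L^{q}$ bound, provided $q>2$. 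In the one-dimensional setting that concerns us, the scaling relation together with $2\le r\le\infty$ forces $q\ge 4$, so the potentially delicate endpoint $q=2$ never appears and HLS applies with room to spare. The same reasoning, with $(q,r)$ and $(\tilde q,\tilde r)$ playing asymmetric roles via $T\tilde T^{*}$, handles the bilinear bound for mixed admissible pairs.

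The main obstacle is the retarded inhomogeneous estimate \eqref{3str}, where the time integration runs only over $s<t$ instead of all of $\R$. The cleanest way around this is the Christ--Kiselev lemma, which promotes the boundedness of an unrestricted integral operator to its retarded truncation, at the price of excluding the endpoint $q=\tilde q=2$. Since in dimension $n=1$ every admissible pair satisfies $q,\tilde q\ge 4>2$, the Christ--Kiselev lemma applies uniformly and produces \eqref{3str} for arbitrary admissible $(q,r)$ and $(\tilde q,\tilde r)$. I expect the HLS convolution bound to be the only genuinely computational step; after that, duality and Christ--Kiselev assemble the three estimates in a routine manner, and the restriction to the non-endpoint range is automatic in one dimension.
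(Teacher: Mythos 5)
Your sketch is correct, but it is worth noting that the paper does not prove this theorem at all: it simply states it and refers the reader to Keel--Tao for a complete proof. So your argument is genuinely different in that it is an actual proof, namely the classical Ginibre--Velo/Kato scheme: the $TT^{*}$ identity reduces \eqref{1str} and \eqref{2str} to the untruncated bilinear bound, the dispersive estimate plus the admissibility relation $\tfrac{2}{q}=\tfrac{n}{2}-\tfrac{n}{r}$ turns the kernel into $|t-s|^{-2/q}$, one-dimensional Hardy--Littlewood--Sobolev in the time variable closes the diagonal case for $q>2$, and Christ--Kiselev handles the retarded truncation in \eqref{3str}. Your observation that in $n=1$ every admissible pair has $q\ge 4$ is exactly what makes this elementary route sufficient here, whereas the Keel--Tao citation buys the endpoint $(2,\tfrac{2n}{n-2})$ in dimensions $n\ge 3$, which is irrelevant for this paper. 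One small point to tighten: for the \emph{mixed}-pair untruncated inhomogeneous estimate you should not apply HLS directly to the kernel (the decay exponent no longer matches a single HLS index when $(q,r)\neq(\tilde q,\tilde r)$); instead factor the operator through $L^{2}$, writing $\int_{\R}S(t-s)F(s)\,ds=S(t)\bigl(\int_{\R}S^{*}(s)F(s)\,ds\bigr)$ and composing \eqref{2str} with \eqref{1str}. With that correction, duality and Christ--Kiselev assemble all three estimates as you describe, and the argument is complete for every admissible pair in one dimension.
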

For a complete proof of the Theorem one can see \cite{keeltao}.
\begin{remark}\label{crucial}
The pairs $(q,r)$, $(\tilde q,\tilde r)$ are not related to each other in
the Strichartz's estimates. This turns out to be a crucial fact for the nonlinear applications.
\end{remark}

\subsection{A class of semilinear Schr\"odinger equations}

One of the most important class of nonlinear Schr\"odinger equations are the following:
\begin{equation}\label{nonlschr}
i\partial_t\Psi+\frac{1}{2}\Delta\Psi =\pm |\Psi|^{\gamma-1}\Psi,
\end{equation}
with $\gamma>1$.\\
As we can see in \eqref{nonlschr}, the evolution is a competition between the linear part and the nonlinear one. So we can expect that the evolution has \emph{linearly dominated behavior} or \emph{nonlinearly dominated behavior} or \emph{intermediate behavior}. Nonlinear physics phenomena are characterized by a variety
of complex phenomena; e.g. shock-waves, solitons and instabilities, hence, in a predominantly nonlinear regime we can expect a tricky scenario.\\

So, we are interested in classifying the nonlinearity. Two basic features are crucial: the conservation laws and the natural scale-invariance of the equation.\\
Thanks to the structure of the equation \eqref{nonlschr}, in $H^1(\R)$, the following conservation laws hold:
\begin{itemize}
\item Mass conservation:
\begin{equation*}
\|\Psi(t)\|_{L^2}= \|\Psi(0)\|_{L^2},
\end{equation*}
\item Energy conservation:
\begin{equation*}
E(\Psi(t))= \frac{1}{4}\|\nabla \Psi (t)\|_{L^2}^2\pm\frac{1}{\gamma+1}\|\Psi(t)\|_{L^{\gamma+1}}^{\gamma+1}=E(\Psi(0)),
\end{equation*}
\item Momentum conservation:
\begin{equation*}
\rm{Im} \left( \int \nabla \Psi (t,x)\bar{\Psi}(t,x)\,dx\right)=\rm{Im} \left( \int \nabla \Psi (0,x)\bar{\Psi}(0,x)\,dx\right).
\end{equation*}
\end{itemize}
Using the scale-invariance for \eqref{nonlschr}
\begin{equation}\label{scalingsolution}
\Psi_{\lambda}(t,x)=\lambda^{2/(1-\gamma)}\Psi(\frac{t}{\lambda^2},\frac{x}{\lambda}),
\end{equation}
for $\lambda>0$, we can classify the conservation laws as \emph{subcritical}, \emph{critical} (scale-invariant), or \emph{supercritical}. \\
In particular, in one dimension, using $L^2$-conservation (similarly for $H^s$ conservation), we have
\begin{equation}\label{rescaling}
\|\Psi_{\lambda}(t,\cdot)\|_{L^2}=\lambda^\frac{5-\gamma}{2(1-\gamma)}\|\Psi(t,\cdot)\|_{L^2}.
\end{equation}
We can give the following definition:
\begin{definition}
Let $\gamma >1$, we say that
\begin{itemize}
\item $\gamma$ is $L^2$-subcritical if $1<\gamma< 5$,
\item $\gamma$ is $L^2$-critical if $\gamma= 5$,
\item $\gamma$ is $L^2$-supercritical if $\gamma>5$.
\end{itemize}
\end{definition}
The rescaling relation \eqref{rescaling}, maight be interpreted as following: in subcritical case, the norm of the initial data can be made small while the interval of time is made longer; in supercritical case, the norm grows as the time interval gets longer; finally, in the critical case, the norm is invariant while the interval of time is made longer or shorter: this looks like a limit situation for well-posedness results.

Another most important distinction is whether the equation is \emph{focusing} ($\alpha=-1$) or \emph{defocusing} ($\alpha=1$). We can not make an exact distinction, but, broadly, in a defocusing case, the nonlinearity has the same sign as the linear component, thus, the dispersive effects of the linear equation are amplified. On the contrary, in the focusing case the dispersive effects can be attenuated, halted (stationary or travelling waves can occur) or even reversed (blow up of solution in finite time can occur).
	
Except to $1$-dim cubic NLS, the equations are not completly integrable. We are interestested in the fundamental question of \emph{well-posedness} that is often closely intertwined with the quantitative estimates (\emph{a priori estimates}).

For some literature on local existence results in the subcritical case, one can see \cite{ginibrevelo79}, \cite{kato}, \cite{tsutsumi87} and \cite{cazenave}. For local existence in the critical case, one can see \cite{cazenave90}, \cite{cazenave}. Finally, for the global critical case one can see \cite{ginibrevelo}, \cite{cazenave}. \\
Now we state some of the results that will come in handy later.
\begin{thm}[$L^2$ 
 well-posedness] (Cazenave \cite{cazenave}, Section 4.6)
Let $f\in L^2(\R)$. The following statements hold: 
\begin{itemize}
\item Let $1<\gamma<5$ and $\alpha=\pm 1$. Then there exists a unique global solution $\Psi\in C(\R,L^2)\cap L^q (\R,L^r(\R))$;
\item Let $\gamma=5$ and $\alpha=\pm 1$. Then there exists $\delta>0$, quite small, such that, if $\|f\|_{L^2}\leq \delta$ we have a unique global solution $\Psi\in C(\R,L^2)\cap L^q (\R,L^r(\R))$.
\end{itemize}
\end{thm}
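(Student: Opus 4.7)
The plan is to apply the classical Strichartz contraction-mapping argument to the Duhamel reformulation
$$
\Psi(t)=S(t)f-i\alpha\int_{0}^{t}S(t-s)|\Psi|^{\gamma-1}\Psi(s)\,ds=:\Phi(\Psi)(t),
$$
on the complete metric space
$$
X_T=\Bigl\{\Psi\in C([0,T],L^2(\R))\cap L^q([0,T],L^r(\R)):\|\Psi\|_{L^\infty_T L^2}+\|\Psi\|_{L^q_T L^r_x}\le M\Bigr\},
$$
equipped with the distance induced by the $L^q_T L^r_x$ norm. I would fix once and for all the admissible pair
$$
(q,r)=\Bigl(\tfrac{4(\gamma+1)}{\gamma-1},\,\gamma+1\Bigr),
$$
which is admissible for $n=1$ and any $\gamma>1$ and, crucially, satisfies $\gamma r'=r$, exactly what the pure-power nonlinearity asks for.

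The heart of the argument is the nonlinear estimate. Combining the Strichartz inequalities \eqref{1str}--\eqref{3str} applied with the dual pair $(\tilde q,\tilde r)=(q,r)$, followed by H\"older in time, I would obtain
$$
\bigl\||\Psi|^{\gamma-1}\Psi\bigr\|_{L^{q'}_T L^{r'}_x}=\bigl\|\|\Psi\|_{L^r_x}^\gamma\bigr\|_{L^{q'}_T}\le T^{(5-\gamma)/4}\,\|\Psi\|_{L^q_T L^r_x}^\gamma,
$$
where the H\"older exponent relation $q'\le q/\gamma$ is admissible precisely when $\gamma\le 5$ (one checks $\gamma(\gamma-1)\le 3\gamma+5$). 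Using in parallel the pair $(\infty,2)$ for the $L^\infty_T L^2_x$ norm, this produces
$$
\|\Phi(\Psi)\|_{X_T}\le C\|f\|_{L^2}+C\,T^{(5-\gamma)/4}M^\gamma,
$$
and the same computation on differences yields a Lipschitz factor $C\,T^{(5-\gamma)/4}M^{\gamma-1}$. Setting $M=2C\|f\|_{L^2}$ and choosing $T$ so small that $CM^{\gamma-1}T^{(5-\gamma)/4}\le 1/2$ makes $\Phi$ a strict contraction, hence a unique local solution exists in $X_T$.

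In the subcritical regime $1<\gamma<5$ the exponent $(5-\gamma)/4$ is strictly positive, so the local existence time depends only on $\|f\|_{L^2}$; combining this with mass conservation $\|\Psi(t)\|_{L^2}=\|f\|_{L^2}$ (valid in the existence interval) shows that the admissible restart time is the same at every step, and iterating the local construction from $\Psi(T),\Psi(2T),\dots$ yields a global solution. The assertion for an arbitrary admissible pair follows by plugging the already-constructed $\Psi$ back into \eqref{1str}--\eqref{3str} with a different output pair. In the critical case $\gamma=5$ the $T$-factor disappears, so the contraction estimate reduces to $\|\Phi(\Psi)\|_{X_T}\le C\|f\|_{L^2}+CM^{5}$ independently of $T$; picking $\delta$ so small that $M=2C\delta$ makes $CM^{\gamma-1}\le 1/2$ turns $\Phi$ into a contraction on $X_T$ for every $T>0$, producing the global small-data solution. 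The main obstacle is precisely the disappearance of the temporal gain at $\gamma=5$: one can no longer trade largeness of $M$ against smallness of $T$, which is why a smallness hypothesis on $\|f\|_{L^2}$ is intrinsic to the critical statement.
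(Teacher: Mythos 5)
Your argument is correct and is exactly the standard Kato--Cazenave contraction scheme; note that the paper states this theorem without proof (it is quoted from Cazenave, Section~4.6), and your Strichartz-plus-H\"older fixed point with the pair $(q,r)=(\tfrac{4(\gamma+1)}{\gamma-1},\gamma+1)$, mass conservation for globalization, and small-data absorption at $\gamma=5$ is precisely the scheme the paper itself deploys for the FSPS system in Theorems~\ref{lex} and~\ref{gecsd}. The one caveat is that in the subcritical case your iteration yields $\Psi\in L^q_{\mathrm{loc}}(\R,L^r)$ rather than the global-in-time $L^q(\R,L^r)$ membership literally written in the statement (which cannot hold in general, e.g.\ for focusing solitons) --- but that imprecision sits in the quoted statement, not in your proof.
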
 
\begin{thm}[$H^1$ well-posedness](Cazenave \cite{cazenave}, Section 4.4)
Let $f\in H^1(\R)$. The following statements hold: 
\begin{itemize}
\item Let $1<\gamma <5$ and $\alpha=\pm 1$. Then there exists a unique global solution $\Psi\in \C(\R,H^2)\cap L^q (\R,W^{1,r}(\R))$; 
\item Let $\gamma=5$ and $\alpha=-1$. Then there exists $\delta>0$, quite small, such that, if $\|f\|_{L^2}\leq \delta$ we have a unique global solution $\Psi\in C(\R,H^1)\cap L^q(\R, W^{1,r}(\R))$. 
\end{itemize}
\end{thm}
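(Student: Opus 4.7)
The plan is to adapt the standard $L^2$-level contraction argument to the $H^1$ setting by differentiating the Duhamel formula and working in the space $Y_T = C([-T,T],H^1)\cap L^q([-T,T],W^{1,r})$ for a conveniently chosen admissible pair $(q,r)$. Writing $\Psi(t)=S(t)f \mp i\int_0^t S(t-s)|\Psi|^{\gamma-1}\Psi(s)\,ds$ and using that $\nabla$ commutes with $S(t)$, the Strichartz inequalities \eqref{1str}--\eqref{3str} reduce matters to estimating $\||\Psi|^{\gamma-1}\Psi\|_{L^{\tilde q'}W^{1,\tilde r'}}$ for a well-chosen dual admissible pair. Since $\nabla(|\Psi|^{\gamma-1}\Psi)$ is pointwise bounded by $|\Psi|^{\gamma-1}|\nabla\Psi|$, one distributes via Hölder, placing $\Psi$ in a mixed Lebesgue space controlled by the Sobolev embedding $H^1(\R)\hookrightarrow L^p(\R)$ valid for all $2\le p\le\infty$ in one dimension, and placing $\nabla\Psi$ in the Strichartz norm. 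Choosing $(q,r)$ in such a way that the time exponents close gives a contraction on a ball of $Y_T$ for $T=T(\|f\|_{H^1})$ small, which produces the local solution.

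To globalize in the subcritical range $1<\gamma<5$, I would invoke the mass and energy conservation laws recalled in the preliminaries. In the defocusing case $\alpha=+1$ the energy is nonnegative and directly controls $\|\nabla\Psi(t)\|_{L^2}^2$ by $4E(f)$, so the $H^1$ norm stays bounded on every time interval where the solution exists. In the focusing case $\alpha=-1$ the potential term has the wrong sign, but the one-dimensional Gagliardo--Nirenberg inequality $\|u\|_{L^{\gamma+1}}^{\gamma+1}\le C\|u\|_{L^2}^{(\gamma+3)/2}\|\nabla u\|_{L^2}^{(\gamma-1)/2}$ together with mass conservation gives, for $\gamma<5$, $E(f)\ge \tfrac{1}{4}\|\nabla\Psi\|_{L^2}^2-C\|f\|_{L^2}^{(\gamma+3)/2}\|\nabla\Psi\|_{L^2}^{(\gamma-1)/2}$ with $(\gamma-1)/2<2$, so a Young/absorption argument bounds $\|\nabla\Psi\|_{L^2}$ by a constant depending only on $\|f\|_{H^1}$. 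A standard blow-up alternative then rules out finite-time breakdown and yields the global solution announced in the first bullet.

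For the critical focusing case $\gamma=5$ with small $\|f\|_{L^2}\le\delta$, the Gagliardo--Nirenberg exponent equals the kinetic exponent, so the same computation now gives $E(f)\ge \tfrac{1}{4}(1-C\|f\|_{L^2}^4)\|\nabla\Psi\|_{L^2}^2$. Taking $\delta$ small enough that $C\delta^4<\tfrac{1}{2}$ makes the right-hand side coercive and delivers an a priori $H^1$ bound propagated by mass conservation. This a priori bound, combined with the local theory and the blow-up alternative (and with a global small-data Strichartz argument analogous to Theorem~\ref{4} to control the critical $L^q W^{1,r}$ norm on all of $\R$), yields the second bullet.

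The main obstacle I expect is the critical focusing case: the contraction space must be tuned so that the nonlinear estimate closes without any positive power of $T$ available, forcing one to exploit smallness of $\|f\|_{L^2}$ through Strichartz on the whole line rather than through a short-time factor; everything else is a fairly routine combination of Strichartz, Sobolev embedding, and the conservation laws.
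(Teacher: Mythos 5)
Your proposal is correct and follows essentially the same route as the source: the paper states this theorem without proof, citing Cazenave Section 4.4, and the argument you outline --- contraction in $C([0,T],H^1)\cap L^q([0,T],W^{1,r})$ via Strichartz after differentiating Duhamel, then globalization by mass/energy conservation with the one-dimensional Gagliardo--Nirenberg inequality and a Young/absorption step in the focusing subcritical case, and coercivity of the energy under a small-mass condition when $\gamma=5$ --- is precisely the standard one, which the paper itself reproduces for its analogous FSPS results in Theorems \ref{lth1} and \ref{h1focus}. The only defect is in the statement rather than in your proof: the first bullet's ``$\C(\R,H^2)$'' should read $C(\R,H^1)$.
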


In $H^1$-theory, about global existence results for small data when $\gamma=5$, looking for the sharp mass $\delta$ which allows to obtain global well-posedness is interlinked with the problem of the best constant in the Gagliardo-Nirenberg interpolation estimates.
	\section {Well-posedness of the Cauchy problem}
We now turn to the system (FSPS). \\
We say that an initial-value problem for a partial differential equation is well-posed in $H^s$ if:
\begin{itemize}
\item there exists a time interval, $[0,T]$, in which the problem in fact has an $H^s$ solution,
\item the solution is unique,
\item the solution depends continuosly on the initial data.
\end{itemize}
 
We seek for an handy formulation of the Cauchy problem $(FSPS)$ to begin with.
\begin{rem}\label{propsps} (Stein \cite{stein}, Section 5.1)
Let $0<\sigma <1$. Then  
\begin{equation}\label{trasformatafraz}
\mathscr F (|x|^{-\sigma})(\xi)=C(\sigma) |\xi|^{-(1-\sigma)},
\end{equation}
where 
$$
C(\sigma)= \sqrt{\pi}2^{1-\sigma}\frac{\Gamma(\frac{1-\sigma}{2})}{\Gamma(\frac{\sigma}{2})}.
$$
The equation \eqref{trasformatafraz} is understood in the sense of the tempered distributions.
\end{rem}
\begin{lem}[Hardy-Littlewood-Sobolev Inequality]\label{hls}
Let $f\colon \R^n\to\C$ a mesaurable function, $1<r<p<\infty$ and $f\in L^r(\R^n)$. Let $0<\beta <n$. Then there exists $C=C_{p,\beta,n}$ a positive constant such that
\begin{equation}\label{hlsi}
\left\|\frac{1}{|y|^{\beta}}*f\right\|_{L^p}\leq C \|f\|_{L^r},
\end{equation}
where, 
$\frac{1}{p}=\frac{\beta}{n}+\frac{1}{r}-1$.
\end{lem}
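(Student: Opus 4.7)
The plan is to establish the inequality via a pointwise bound of the convolution in terms of the Hardy-Littlewood maximal function, and then integrate. First I would fix $x\in\R^n$ and a parameter $R>0$ to be optimized later, and split the convolution into a near and a far part:
\begin{equation*}
\left|\frac{1}{|y|^\beta}*f(x)\right| \leq \int_{|x-y|<R}\frac{|f(y)|}{|x-y|^\beta}\,dy + \int_{|x-y|\geq R}\frac{|f(y)|}{|x-y|^\beta}\,dy =: I_1(x,R)+I_2(x,R).
\end{equation*}

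For the near part $I_1$, I would decompose the ball $B(x,R)$ dyadically into annuli $A_k=\{R/2^{k+1}\leq |x-y|< R/2^k\}$ for $k\geq 0$; on each annulus $|x-y|^{-\beta}\leq C\,(R/2^k)^{-\beta}$, while $\int_{A_k}|f(y)|\,dy\leq |B(x,R/2^k)|\,Mf(x)$, where $Mf$ is the Hardy-Littlewood maximal function. Summing the resulting geometric series $\sum_{k\geq 0}2^{-k(n-\beta)}$, which converges since $\beta<n$, yields $I_1\leq C\,R^{n-\beta}\,Mf(x)$. For the far part $I_2$, I would apply H\"older's inequality with exponents $r$ and $r'$: the defining relation $\tfrac{1}{p}=\tfrac{\beta}{n}+\tfrac{1}{r}-1$ together with $p<\infty$ forces $\beta r'>n$, so $\int_{|x-y|\geq R}|x-y|^{-\beta r'}\,dy=C_n\,R^{n-\beta r'}$ converges, giving $I_2\leq C\,R^{-n/p}\,\|f\|_{L^r}$ (the exponent $-n/p$ arises from $n/r'-\beta=-n/p$ via the scaling relation).

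Next I would optimize $R$ by equating the two bounds, $R^{n-\beta}\,Mf(x)=R^{-n/p}\,\|f\|_{L^r}$; using the identity $n-\beta+n/p=n/r$ this produces the pointwise estimate
\begin{equation*}
\left|\frac{1}{|y|^\beta}*f(x)\right| \leq C\,\|f\|_{L^r}^{\,1-r/p}\,(Mf(x))^{r/p},
\end{equation*}
with the two exponents summing to $1$. Raising to the $p$-th power and integrating over $\R^n$ then gives
\begin{equation*}
\left\|\frac{1}{|y|^\beta}*f\right\|_{L^p}^{\,p} \leq C\,\|f\|_{L^r}^{\,p-r}\int_{\R^n}(Mf)^r\,dx.
\end{equation*}

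The one substantive tool remaining is the Hardy-Littlewood maximal inequality $\|Mf\|_{L^r}\leq C_r\|f\|_{L^r}$, which is valid precisely because $r>1$ is assumed (it fails at $r=1$, which is exactly why HLS requires $r>1$). This is the main obstacle, since it rests on nontrivial Vitali covering arguments, but it is a classical result I would simply quote. Applying it yields $\||y|^{-\beta}*f\|_{L^p}^p\leq C\,\|f\|_{L^r}^{\,p}$, and taking $p$-th roots finishes the proof. A scaling check $f\mapsto f(\lambda \cdot)$ confirms that the exponent relation $\tfrac{1}{p}=\tfrac{\beta}{n}+\tfrac{1}{r}-1$ is also necessary for such an estimate to hold.
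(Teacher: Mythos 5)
Your proof is correct and complete, and it is worth noting that the paper itself offers no proof of this lemma at all: it simply defers to Stein's monograph. So any comparison is really between your argument and the classical one you would find there. Stein's proof of the fractional integration theorem proceeds by splitting the kernel, deriving a weak-type $(r,p)$ estimate from the distribution function, and then upgrading to the strong bound by Marcinkiewicz interpolation. What you have written is instead Hedberg's pointwise argument: the near/far splitting at a radius $R$, the dyadic annuli bound $I_1\leq CR^{n-\beta}Mf(x)$, the H\"older bound $I_2\leq CR^{-n/p}\|f\|_{L^r}$, and the optimization in $R$ giving the interpolation-type pointwise inequality $|(|y|^{-\beta}*f)(x)|\leq C\|f\|_{L^r}^{1-r/p}(Mf(x))^{r/p}$, after which the strong $L^r$ bound for the maximal function finishes the job. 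All of your exponent arithmetic checks out ($\beta r'>n$ follows from $p<\infty$, $n/r'-\beta=-n/p$, and $n-\beta+n/p=n/r$), and the hypothesis $r>1$ enters exactly where you say it does. The trade-off between the two routes is the standard one: the interpolation proof is self-contained at the level of distribution functions but requires Marcinkiewicz, while yours outsources the hard covering-lemma content to the maximal theorem and in exchange yields the stronger pointwise domination by $Mf$, which is often reusable elsewhere. The only cosmetic gap is that the optimization in $R$ tacitly assumes $0<Mf(x)<\infty$; the degenerate cases are trivial ($Mf(x)=0$ forces $f=0$ a.e., and the bound is vacuous where $Mf(x)=\infty$), but a one-line remark to that effect would make the argument airtight.
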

For a proof of this Lemma see Stein \cite{stein}. 
\begin{prop}
Let $0<\sigma<1$, $T>0$ and let $(q,r)$ be an admissible pair with $2<r<\frac{2}{\sigma}$. Suppose that $\Psi\colon[0,T]\times\R\to\C$ a known function and $\Psi\in  L^q([0,T],L^r)$.\\
Then, there exists a unique electric potential $A_0$, 
$$A_0\colon[0,T]\times \R\to \R,$$
\begin{equation}\label{convolform}
A_0(t,x)=C(\sigma)[|\cdot|^{-(1-\sigma)}*|\Psi|^2](t,x),
\end{equation}
\begin{equation}
A_0 \in L^{q/2}([0,T],L^{r/(2-r\sigma)}(\R)),
\end{equation}
solution of the fractional Poisson equation 
\begin{equation}\label{fractionalpoisson}
(-\Delta)^{-\sigma/2}A_0=|\Psi|^2.
\end{equation}
\end{prop}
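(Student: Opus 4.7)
The plan is to pass to the Fourier side, solve the resulting algebraic equation explicitly, read off the formula as a Riesz-potential convolution, and then use the Hardy--Littlewood--Sobolev inequality (Lemma \ref{hls}) together with Remark \ref{propsps} to place $A_0$ in the correct Bochner space.

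First I would argue existence. Formally, taking the Fourier transform in $x$ of $(-\Delta)^{\sigma/2}A_0=|\Psi|^2$ gives $|\xi|^{\sigma}\widehat{A_0}=\widehat{|\Psi|^2}$, hence $\widehat{A_0}=|\xi|^{-\sigma}\widehat{|\Psi|^2}$. By Remark \ref{propsps}, replacing $\sigma$ by $1-\sigma$ (valid since $0<1-\sigma<1$), the multiplier $|\xi|^{-\sigma}$ is (up to a constant) the Fourier transform of the locally integrable Riesz kernel $|x|^{-(1-\sigma)}$, so by the convolution theorem one obtains precisely the formula
\[
A_0(t,x)=C(\sigma)\bigl[|\cdot|^{-(1-\sigma)}*|\Psi|^{2}\bigr](t,x).
\]
I would then take this expression as the \emph{definition} of $A_0$ and verify a posteriori that it solves the fractional Poisson equation in $\mathscr{S}'(\R)$; the Fourier inversion argument above is legitimate because $|\Psi(t,\cdot)|^2\in L^{r/2}(\R)\subset \mathscr{S}'(\R)$ for a.e.~$t$.

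Next I would control $\|A_0(t,\cdot)\|_{L^p}$ at each fixed time by Hardy--Littlewood--Sobolev. The function $g=|\Psi|^2$ lies in $L^{r/2}$, and we convolve against $|x|^{-(1-\sigma)}$ with exponent $\beta=1-\sigma\in(0,1)$. The HLS relation $\tfrac{1}{p}=\tfrac{\beta}{n}+\tfrac{1}{r/2}-1$ with $n=1$ gives $\tfrac{1}{p}=\tfrac{2-r\sigma}{r}$, i.e. $p=\tfrac{r}{2-r\sigma}$, and the HLS hypothesis $1<r/2<p<\infty$ translates exactly into $2<r<2/\sigma$, which is the standing assumption. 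Therefore, pointwise in $t$,
\[
\|A_0(t,\cdot)\|_{L^{r/(2-r\sigma)}}
\;\leq\; C\,\bigl\||\Psi(t,\cdot)|^2\bigr\|_{L^{r/2}}
\;=\;C\,\|\Psi(t,\cdot)\|_{L^r}^{2}.
\]
Raising to the power $q/2$ and integrating in $t\in[0,T]$ yields
\[
\|A_0\|_{L^{q/2}([0,T],L^{r/(2-r\sigma)})}^{\,q/2}
\;\leq\; C\,\|\Psi\|_{L^{q}([0,T],L^{r})}^{\,q},
\]
which is the desired Bochner space bound.

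For uniqueness, suppose $A_0$ and $\widetilde{A_0}$ are two solutions both belonging to $L^{q/2}([0,T];L^{r/(2-r\sigma)}(\R))$. Their difference $w=A_0-\widetilde{A_0}$ satisfies $(-\Delta)^{\sigma/2}w=0$, so $|\xi|^{\sigma}\widehat{w(t,\cdot)}=0$ as a tempered distribution; since $|\xi|^{\sigma}$ vanishes only at the origin, $\widehat{w(t,\cdot)}$ is supported at $\{0\}$ and hence $w(t,\cdot)$ is a polynomial for a.e.~$t$. But the only polynomial lying in the Lebesgue space $L^{r/(2-r\sigma)}(\R)$ (which has finite exponent) is the zero polynomial, so $w\equiv 0$.

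The main technical point is the sharp verification of the HLS admissibility range $2<r<2/\sigma$; this is precisely where the upper bound $r<2/\sigma$ appears naturally (keeping $p$ finite), and where the lower bound $r>2$ ensures that $|\Psi|^2$ lands in an $L^{r/2}$ space with $r/2>1$ so that the weak-type endpoint is avoided. Everything else is bookkeeping with the convolution theorem and Remark \ref{propsps}.
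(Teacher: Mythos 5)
Your proof is correct and follows essentially the same route as the paper's: Fourier-transform the equation, identify the multiplier $|\xi|^{-\sigma}$ with the Riesz kernel $|x|^{-(1-\sigma)}$ via Remark \ref{propsps}, and combine the Hardy--Littlewood--Sobolev inequality with H\"older in time to place $A_0$ in $L^{q/2}([0,T],L^{r/(2-r\sigma)})$; your explicit check that the HLS admissibility conditions reduce to $2<r<2/\sigma$ fills in a detail the paper leaves implicit. Your uniqueness argument (the difference has Fourier transform supported at the origin, hence is a polynomial, which cannot lie in a finite-exponent Lebesgue space unless it vanishes) is in fact more careful than the paper's one-line appeal to injectivity of the Fourier transform.
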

\begin{proof}
By the \eqref{fraclapl} and by \eqref{fractionalpoisson} we have that
\begin{equation}
 \hat A_0(\xi)=|\xi|^{-\sigma} (|\Psi|^2 \hat)(\xi).
\end{equation}
Thanks to the Lemma \ref{propsps} and passing under Fourier antitransform, we have that 
\begin{equation*}
A_0(t,x)=C(\sigma)\left[ |\cdot|^{-(1-\sigma)}*|\Psi|^2(t,\cdot)\right]  \,(x).
\end{equation*}
Hence the equality \eqref{convolform} has been proved.\\
By the hypothesis on $\Psi$ and by the H\"older inequality we have that $|\Psi|^2\in L^{q/2}([0,T],L^{r/2}_x(\R))$. So, the Lemma \ref{hlsi} tells us that $A_0 \in L^{q/2}([0,T],L^{\frac{r}{2-r\sigma}}_x(\R))$. \\
The unicity of the electric potential $A_0$ is guaranteed by the unicity of the wave function $\Psi$ and by injectivity of Fourier transform.
\end{proof}
Now we bring us back to study the following Cauchy problem:
\begin{subnumcases}
{\,}
i\partial_t\Psi+\frac{1}{2}\Delta\Psi =C(\sigma)[|\cdot|^{-(1-\sigma)}*|\Psi|^2]\Psi +\alpha |\Psi|^{\gamma-1}\Psi \,  \label{sps1}\\
\Psi(0,\cdot)=f. \label{sps3} 
 \end{subnumcases}
 First of all we specify which kind of solutions we are searching for. We give the following definition:
 \begin{definition}
 Let $X_0$ be a Banach space, $f\in X_0$ and $T>0$. We consider the map
 \begin{equation}\label{mappa}
 \mathscr H[\Psi](t)=S(t)f-iC(\sigma)\int_{0}^{t}S(t-s)[|\cdot|^{-(1-\sigma)}*|\Psi|^2]\Psi(s)\,ds-i\alpha\int_{0}^{t}S(t-s) |\Psi|^{\gamma-1}\Psi(s) \,ds,
 \end{equation}
 with $t\in[0,T]$.\\
 We say that $\Psi\in C([0,T],X_0)$ is a local solution of \eqref{sps1}-\eqref{sps3} if $\Psi$ is a fixed point of the map $\mathscr H$, i.e. $\Psi=\mathscr H (\Psi)$.
 \end{definition}
 \subsection{Well-posed problems with initial data in $L^2$}
We start with $L^2$-theory. By means of the contraction theory, we can prove, in subcritical case, a local existence result (Theorem \ref{lex}). Actually, in this case , thanks to the mass conservation, we can extend the local result to a global one (Corollary 4.9). On the other hand, in the $L^2$-critical case we can prove a local result with large data (Theorem \ref{lecld}) and a global result with small data (Theorem \ref{gecsd}). \\
 Notice that the $L^2$-theory does not see the difference between the defocusing case ($\alpha=1$) and the focusing case ($\alpha=-1$).
\begin{thm}[local existence $L^2$-subcritical]\label{lex}
 
Let $1<\gamma<5$, $\alpha=\pm1$ and $f\in L^2(\R)$. \\
Then, there exists an interval $I_\gamma \subseteq (0,1)$, such that, if $\sigma\in I_\gamma$, one can find a time $T=T(\|f\|_{L^2})>0$ and a unique wave function $\Psi$, 
$$\Psi\colon[0,T]\times \R \to \C ,$$
solution of the problem \eqref{sps1}-\eqref{sps3}. \\
In addition, we have
\begin{equation}\label{regolonda}
\Psi \in C([0,T],L^2(\R))\cap L^q([0,T],L^r(\R)),
\end{equation}
for any $(q,r)$ admissible pair.
\end{thm}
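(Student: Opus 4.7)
The plan is to set up a standard contraction-mapping argument for the map $\mathscr H$ in \eqref{mappa} on a closed ball of
\[
X_T = C([0,T],L^2(\R))\cap L^q([0,T],L^r(\R)),
\]
for a carefully chosen admissible pair $(q,r)$ satisfying $2<r<2/\sigma$ (as required by Proposition above on $A_0$), endowed with the $L^q_tL^r_x$-norm. The linear part $S(t)f$ is in $X_T$ by the homogeneous Strichartz estimate \eqref{1str}. The job is then to estimate the two Duhamel terms in some dual Strichartz norm $L^{\tilde q'}_tL^{\tilde r'}_x$ for (possibly different) admissible pairs, and apply the inhomogeneous estimate \eqref{3str}; by Remark~\ref{crucial} we may treat each nonlinearity with its own admissible exponent, which gives the flexibility we need.

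First I would handle the local nonlinearity $|\Psi|^{\gamma-1}\Psi$ with the classical subcritical NLS choice: by Hölder in space $\||\Psi|^{\gamma-1}\Psi\|_{L^{\tilde r_2'}_x}\leq \|\Psi\|_{L^r_x}^{\gamma}$ with $\tilde r_2'=r/\gamma$, and by Hölder in time
\[
\big\| |\Psi|^{\gamma-1}\Psi \big\|_{L^{\tilde q_2'}_t L^{\tilde r_2'}_x} \leq T^{\theta}\,\|\Psi\|_{L^q_tL^r_x}^{\gamma},
\]
for some $\theta>0$, provided $(\tilde q_2,\tilde r_2)$ is admissible and $q/\gamma\geq \tilde q_2'$ strictly. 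In one dimension the admissibility condition $2/\tilde q_2=\tfrac12-1/\tilde r_2$ together with $\gamma<5$ yields the required positive time-power $\theta$, exactly as in Cazenave's subcritical $L^2$ theory.

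The more delicate nonlocal term is treated as follows. By Hardy–Littlewood–Sobolev (Lemma~\ref{hls}) with $\beta=1-\sigma$, $n=1$, applied to $|\Psi|^2\in L^{r/2}_x$,
\[
\bigl\| |\cdot|^{-(1-\sigma)} * |\Psi|^2 \bigr\|_{L^{a}_x}\leq C\,\|\Psi\|_{L^r_x}^{2},\qquad \tfrac{1}{a}=\tfrac{2}{r}-\sigma,
\]
which requires $2<r<2/\sigma$. Then Hölder in space gives
\[
\big\|[|\cdot|^{-(1-\sigma)}*|\Psi|^2]\Psi\big\|_{L^{\tilde r_1'}_x}\leq C\,\|\Psi\|_{L^r_x}^{3},\qquad \tfrac{1}{\tilde r_1'}=\tfrac{3}{r}-\sigma,
\]
and Hölder in time upgrades this to $L^{q/3}_tL^{\tilde r_1'}_x\hookrightarrow L^{\tilde q_1'}_tL^{\tilde r_1'}_x$ on $[0,T]$ at the price of a positive power of $T$, provided $(\tilde q_1,\tilde r_1)$ is admissible and $q/3>\tilde q_1'$. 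Imposing the $1$D admissibility relation on $(\tilde q_1,\tilde r_1)$ together with $2<r<2/\sigma$ determines an explicit compatibility region in the $(\sigma,r)$-plane. The interval $I_\gamma$ is precisely the projection onto the $\sigma$-axis of the intersection of this region with the one coming from the local term; choosing $r$ inside this common range makes both estimates work simultaneously.

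Once these bilinear/multilinear estimates are in hand, combining them with \eqref{3str} yields
\[
\|\mathscr H[\Psi]\|_{X_T}\leq C\|f\|_{L^2}+CT^{\theta_1}\|\Psi\|_{X_T}^{3}+CT^{\theta_2}\|\Psi\|_{X_T}^{\gamma},
\]
and analogously for $\mathscr H[\Psi]-\mathscr H[\Phi]$ using the pointwise inequalities $\big||u|^{\gamma-1}u-|v|^{\gamma-1}v\big|\lesssim (|u|^{\gamma-1}+|v|^{\gamma-1})|u-v|$ and the trilinear identity for the Hartree term. Taking a ball of radius $2C\|f\|_{L^2}$ and $T=T(\|f\|_{L^2})$ small enough, $\mathscr H$ becomes a contraction, giving the unique fixed point $\Psi\in X_T$. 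Membership in every other admissible $L^{q}L^r$ norm on $[0,T]$ and continuity in time at values in $L^2$ follow by feeding $\Psi$ back into the Strichartz inequalities. The main obstacle is the algebraic bookkeeping of step three: verifying that $I_\gamma$ is nonempty for every $1<\gamma<5$ and specializing to the announced $\sigma\in(0,1/2]$ when $\gamma=3$.
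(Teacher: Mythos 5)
Your proposal is correct and follows essentially the same route as the paper: a contraction for $\mathscr H$ on a ball in $L^\infty L^2\cap L^qL^r$, with the local term dualized via $\tfrac{1}{\tilde r_1'}=\tfrac{\gamma}{r}$ and the Hartree term via Hardy--Littlewood--Sobolev giving $\tfrac{1}{\tilde r'}=\tfrac{3}{r}-\sigma$, each paired with its own admissible dual exponent and a positive power of $T$ from H\"older in time. The ``algebraic bookkeeping'' you defer is exactly the content of the paper's Step 1 (the intersection $(2,\tfrac{2}{\sigma})\cap[\tfrac{3}{1+\sigma},\tfrac{6}{1+2\sigma}]\cap[\gamma,2\gamma]$ being nonempty, which defines $I_\gamma$ and yields $\sigma\in(0,\tfrac12]$ for $\gamma=3$), so no substantive difference remains.
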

\begin{proof}
As we mentioned before, we are going to proof the theorem by means of a contraction argument. Hence, we have to introduce a suitable Banach space, $X_{0}$, and then we have to prove that $\mathscr H\colon X_0\to X_0$, defined in \eqref{mappa}, is a contraction.\\
Unlike classical NLS, we have also a nonlocal term
\begin{equation*}\label{nonlocalterm}
C(\sigma)[|\cdot|^{-(1-\sigma)}*|\Psi|^2]\Psi,
\end{equation*}
that will bring necessary modification.\\

Let $f\in L^2$ be the initial data. \\
Let $T=T(\|f\|_{L^2})>0$ and $M=M(\|f\|_{L^2})$ be two positive constant which will be defined later and, let $(q,r)$ be an admissible pair
\begin{equation}
\frac{1}{q}=\frac{1}{4}-\frac{1}{2r}.
\end{equation} \\
We will denote the spaces $ L^\infty([0,T],L^2(\R))$ and $L^q([0,T],L^r(\R))$ as $L^\infty L^2$ and $L^q L^r$ respectively to simplify the notation. \\
Let $X_0$ be the Banach space defined as follows:
\begin{equation*}
X_0=\left\lbrace  \Psi \in L^\infty L^2\cap L^q L^r |  \,\Psi(0)=f, \,\|\Psi\|_{X_0}= \|\Psi\|_{L^\infty L^2}+ \|\Psi\|_{L^q L^r}\leq M\right\rbrace. 
\end{equation*}
We will prove that $\mathscr H$ is a contraction on $X_0$.\\

\textbf{Step 1.} (Looking for a working admissible pair)\\
To apply classical estimates that work also on nonlocal term, we have to make some considerations.\\
In order that the Strichartz estimates might give back the desired norm, we have to choose the admissible pair, $(q,r)$, such that we can be able to construct the pairs $(\tilde{q}',\tilde{r}')$ and $(\tilde{q}'_1,\tilde{r}'_1)$ as follows.

At first we consider the nonlocal term. By the Hardy-Littlewood-Sobolev hypothesis and by the condition of Schr\"odinger admissibility on $(\tilde{q}',\tilde{r}')$ we have: 
	\begin{subnumcases}
	{\,}
	0<\sigma<1, \,\,\,\notag\\
	2<r<\frac{2}{\sigma}\notag\\
	\frac{3}{1+\sigma} \leq r \leq \frac{6}{1+2\sigma} \notag\\
	 \frac{1}{\tilde{r}'}=\frac{3}{r}-\sigma\notag\\
	  \frac{1}{\tilde{q}'}= \frac{1}{x}+\frac{3}{q}= \frac{1+\sigma}{2}+\frac{3}{q}\notag\\
	  \frac{1}{\tilde{q}'}=\frac{5}{4}-\frac{1}{2\tilde{r}'}.\notag
	\end{subnumcases}
On the other hand, the local nonlinearity brings the following conditions:
	\begin{subnumcases}
	{\,}
	\gamma\leq r \leq 2\gamma, \notag\\
	 \frac{1}{\tilde{r}'_1}=\frac{\gamma}{r}\notag\\
	  \frac{1}{\tilde{q}'_1}= \frac{1}{x}+\frac{\gamma}{q}= \frac{5-\gamma}{2}+\frac{3}{q}\notag\\
	  \frac{1}{\tilde{q}'_1}=\frac{5}{4}-\frac{1}{2\tilde{r}'_1}\notag
	\end{subnumcases}

Hence, we would of course want the interval 
 \begin{equation*}
    I_{\sigma,\gamma}= (2,\frac{2}{\sigma})\cap[\frac{3}{1+\sigma}, \frac{6}{1+2\sigma}]\cap  [\gamma,2\gamma]
\end{equation*}
to be not empty for all $\sigma\in(0,1)$ and for all $\gamma\in(0,5)$. It is not possible. Indeed, if $\gamma=3$, we have that the set $I_{\sigma,\gamma}\neq \emptyset$ iff $\sigma\in (0,1/2]$.\\
We can represent the relations between $\gamma$ and $\sigma$ such that $I_{\sigma,\gamma}$ is not empty. In particular, in the picture below, the coloured region rapresents the set of the pairs $(\sigma,\gamma)$ for wich our proof works. 

\begin{figure}[h!hhh]
\includegraphics[width=0.8\textwidth]{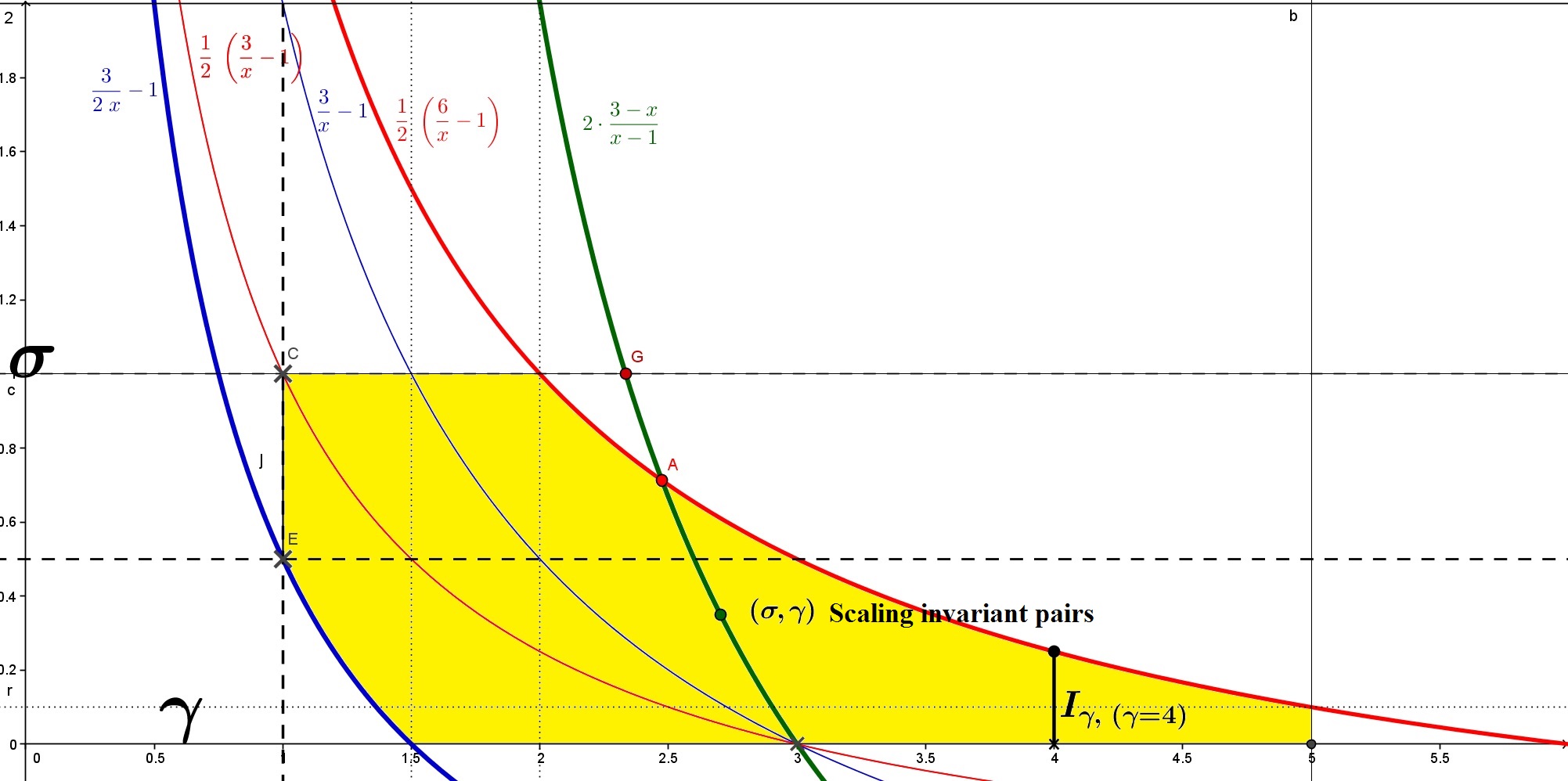}
\end{figure}

Hence, if we fix $\gamma\in (1,5)$, there exists an interval $I_\gamma$, such that, if $\sigma\in I_\gamma$, then we can construct the admissible pairs $(\tilde{q}',\tilde{r}')$ and $(\tilde{q}'_1,\tilde{r}'_1)$ as specified above.
Without loss of generality, in the following, we will consider $\sigma$ small enough, in particular, $\sigma\in(0,1/10]$ and $3/2\leq\gamma<5$.
    
\textbf{Step 2.}\,($\mathscr H$ is a contraction on $X_0$)\\
Proof that $X_0$ is mapped into itself by $\mathscr H$:
\begin{align}
\|\mathscr H \Psi\|_{X_0}&\leq \| S(t)f \|_{X_0} +C\|\int_{0}^{t}S(t-s) [|\cdot|^{-(1-\sigma)}*|\Psi|^2](s,\cdot)\Psi(s)\,ds\|_{X_0}\label{pallainpalla}\\
 \,\,\,\,\,\,&\,\,\,\,\,\,\,\,\,\,\,\,\,\,\,\,\,\,\,\,\,\,\,\,\,\,\,\,\,\,\,\,\,\,\,\,\,\,\,\,\,\,\,\,\,\,\,\,\,\,\,\,\,\,\,\,\,\,\,\,\,\, +\|\int_{0}^{t}S(t-s) |\Psi|^{\gamma-1}\Psi(s)\,ds \|_{X_0}\notag\\
&\leq C \|f\|_{L^2} +C\left\|[|\cdot|^{-(1-\sigma)}*|\Psi|^2]  \Psi \right\|_{L^{\tilde q'}L^{\tilde r'}}+C\||\Psi|^{\gamma-1}\Psi\|_{L^{\tilde q'_1}L^{\tilde r'_1}} \notag\\
&\leq C \|f\|_{L^2}+C\left\| \||\cdot|^{-(1-\sigma)}*|\Psi|^2 \|_{L^{r/(2-r\sigma)}}\| \Psi\|_{L^r} \right\|_{L^{\tilde q'}}+C\left\| \| \Psi\|_{L^r}^\gamma \right\|_{L^{\tilde q'_1}}\notag\\
&\leq C \|f\|_{L^2}+C \left\| \| \Psi\|^3_{L^r} \right\|_{L^{\tilde q'}}+C\left\| \| \Psi\|_{L^r}^\gamma \right\|_{L^{\tilde q'_1}}\notag\\
&\leq C \|f\|_{L^2}+C T^{\frac{1}{2}+\frac{\sigma}{2}}\left\|  \Psi\right\|^3_{L^{q}L^r}+CT^{\frac{5-\gamma}{2}}\left\|  \Psi\right\|^\gamma_{L^{q}L^r},\notag
\end{align}
where we have used the Strichartz estimates, the H\"older inequality and the Lemma \ref{hlsi}. Note that $C$ depends on the constants involved in \eqref{1str}-\eqref{3str} and \eqref{hlsi}.\\
We put $M=3C\|f\|_{L^2}$. If $T=T(\|f\|_{L^2})$ is quite small, then we get
\begin{equation}\label{finita}
\| \mathscr H \Psi\|_{X_0}\leq 3C\|f\|_{L^2}=M.
\end{equation}
Now we want to prove that $\mathscr H$ is a contraction.\\
We have that the following estimates hold: 
\begin{align*}
\left| (|\cdot|^{-(1-\sigma)}*|\Psi_1|^2)  \Psi_1-(|\cdot|^{-(1-\sigma)}*|\Psi_2|^2)  \Psi_2 \right|\leq& (|\cdot|^{-(1-\sigma)}*|\Psi_1|^2)\left|\Psi_1-\Psi_2 \right| \\ &+(|\cdot|^{-(1-\sigma)}*\left[ (|\Psi_1|-|\Psi_2|)(|\Psi_1|+|\Psi_2|)\right])|\Psi_2| 
\end{align*}
and
\begin{equation*}
\left|\Psi_{1}|\Psi_{1}|^{\beta-1} -\Psi_{2}|\Psi_{2}|^{\beta-1}\right| \leq C |\Psi_1-\Psi_2|(|\Psi_1|^{\beta-1}+|\Psi_2|^{\beta-1}),
\end{equation*}
for $\beta>1$. As in \eqref{pallainpalla}, we can prove that $\mathscr H$ is a contraction. \\
Indeed, let $\Psi_1,\Psi_2\in X_0$, we have that
  \begin{align*}
  \| \mathscr H (\Psi_1)-\mathscr H (\Psi_2)\|_{X_0}&\leq  C T ^{1/2+\sigma/2}\|\Psi_1-\Psi_2\|_{X_0}(\|\Psi_1\|^2_{X_0}+\|\Psi_2\|^2_{X_0})\\
    &\,\,\,\,\,+CT^{\frac{5-\gamma}{2}}\|\Psi_1-\Psi_2\|_{X_0}(\|\Psi_1\|^{\gamma-1}_{X_0}+\|\Psi_2\|^{\gamma-1}_{X_0})\\
  & \leq 2CT^{1/2+\sigma/2}(3C\|f\|_{L^2})^2\|\Psi_1-\Psi_2\|_{X_0}\\
  & \,\,\,\,+ 2CT^{\frac{5-\gamma}{2}}(3C\|f\|_{L^2})^{\gamma-1}\|\Psi_1-\Psi_2\|_{X_0}.
  \end{align*}
  Choosing $T=T(\|f\|_{L^2})$ small enough, we get 
  \begin{equation}\label{getcontraction}
   \| \mathscr H (\Psi_1)-\mathscr H (\Psi_2)\|_{X_0}\leq \frac{1}{2}\|\Psi_1-\Psi_2\|_{X_0}.
  \end{equation} 
  The Banach fixed point theorem guarantees the existence and uniqueness of $\Psi\in X_{0}$, such that $\mathscr H (\Psi)=\Psi$.\\
   Hence, there exists a unique wave function $\Psi$, solution of Cauchy problem \eqref{sps1}-\eqref{sps3} and its continuity in time is immediate a posteriori by the \eqref{mappa}. Actually, we have had an additional regularity information: $\Psi\in L^\gamma([0,T],L^\rho(\R))$ for any $(\gamma,\rho)$ admissible pair. It follows by Strichartz estimates \eqref{1str}-\eqref{3str} and by the \eqref{finita}: 
    \begin{equation*}
    \| \Psi\|_{L^\gamma([0,T],L^\rho(\R))}\leq 3C\|f\|_{L^2}.
    \end{equation*}
    
\textbf{Step 3.} (Continuos dependence by initial data)\\
Now we deduce the continuous dependence from initial data to complete the local well-posedness of the Cauchy problem \eqref{sps1}-\eqref{sps3}.\\
Let $f,g\in L^2(\R)$. Let $\Psi(f)$ and $\Psi(g)$ be the solutions of the Cauchy problem \eqref{sps1} with initial data $f$ and $g$ respectively. Writing the solutions in the integral form \eqref{mappa}, a computation like in \eqref{getcontraction} tells us
\begin{equation}
\| \Psi(f)-\Psi(g)\|_{X_0}\leq C\|f-g\|_{L^2}.
\end{equation}
\end{proof}
\begin{rem}
Note that if $\sigma=\frac{2(3-\gamma)}{\gamma-1}$ the problem \eqref{sps1} is scale invariant. That is, if $\Psi $ solves \eqref{sps1}, then $\Psi_{\lambda}$, defined as in \eqref{scalingsolution}, is still a solution. So, in subcritical case, we may expect well-posedness beyond yellow region, at least on the path $\sigma=\frac{2(3-\gamma)}{\gamma-1}$ (green path in the picture above), with $\sigma\in(0,1)$. This may suggest looking for other skills to proof local well posed results.
\end{rem}
Once local existence is established, some natural  issues are the following. What is the existence time of the solution? Can we extend the local solution to global one? Can blow-up phenomena occur? We will try to answer them.\\

At first, we state a conservation law.
\begin{lem}[Conservation mass]\label{conservationmass}
Let $\Psi \in C([0,T],L^2(\R)) $ be a local solution of the Cauchy problem \eqref{sps1}-\eqref{sps3}. Then 
\begin{equation}\label{massa}
\|\Psi (t,\cdot)\|_{L^2}= \|f\|_{L^2},
\end{equation}
for all times $t\in [0,T]$.
\end{lem}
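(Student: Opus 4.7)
The plan is the standard energy-type computation: multiply the Schr\"odinger equation by $\bar{\Psi}$, integrate in $x$, and take imaginary parts so that the nonlinear and the potential terms drop out. Since the electric potential is represented by the Riesz convolution
\[
A_0(t,x)=C(\sigma)\int_{\R}|x-y|^{-(1-\sigma)}|\Psi(t,y)|^2\,dy,
\]
with $C(\sigma)>0$, it is a real-valued function. Consequently both $\int A_0|\Psi|^2\,dx$ and $\int|\Psi|^{\gamma+1}\,dx$ are real, while $\langle \Delta\Psi,\Psi\rangle=-\|\nabla\Psi\|_{L^2}^2$ is real too. Writing
\[
\partial_t\Psi=\tfrac{i}{2}\Delta\Psi-iA_0\Psi-i\alpha|\Psi|^{\gamma-1}\Psi,
\]
one formally gets $\frac{d}{dt}\|\Psi(t)\|_{L^2}^2=2\,\mathrm{Re}\,\langle\partial_t\Psi,\Psi\rangle=0$, which is the conservation law \eqref{massa}.

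The main obstacle is rigor: an $L^2$-solution is not smooth enough to justify multiplying the equation by $\bar\Psi$ and integrating by parts. I would therefore argue by approximation. First pick a sequence $f_n\in H^2(\R)$ (or Schwartz) with $f_n\to f$ in $L^2(\R)$. Because $H^2$ is preserved by the linear flow and by the nonlinearities present, a standard strengthening of the contraction argument in the proof of Theorem~\ref{lex} produces corresponding solutions $\Psi_n\in C([0,T],H^2)\cap C^1([0,T],L^2)$ on a uniform (possibly slightly smaller) time interval. For such regular $\Psi_n$ the identity
\[
\tfrac{d}{dt}\|\Psi_n(t)\|_{L^2}^2
=2\,\mathrm{Re}\int_{\R}\overline{\Psi_n}\,\partial_t\Psi_n\,dx
=2\,\mathrm{Im}\int_{\R}\overline{\Psi_n}\Bigl(-\tfrac{1}{2}\Delta\Psi_n+A_0(\Psi_n)\Psi_n+\alpha|\Psi_n|^{\gamma-1}\Psi_n\Bigr)dx=0
\]
is legitimate (the three integrands on the right are real after integration by parts on the Laplacian term), giving $\|\Psi_n(t)\|_{L^2}=\|f_n\|_{L^2}$ for every $t\in[0,T]$ and every $n$.

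To finish, I use the continuous dependence on initial data established in Step 3 of the proof of Theorem~\ref{lex}: $\Psi_n\to\Psi$ in $C([0,T],L^2(\R))$. Passing to the limit $n\to\infty$ in $\|\Psi_n(t)\|_{L^2}=\|f_n\|_{L^2}$ yields $\|\Psi(t)\|_{L^2}=\|f\|_{L^2}$ for every $t\in[0,T]$, which proves \eqref{massa}. A small side check needed in the regularization is that $A_0(\Psi_n)\Psi_n\to A_0(\Psi)\Psi$ in $L^{\tilde q'}L^{\tilde r'}$ (this is handled exactly as in \eqref{pallainpalla} via Hardy--Littlewood--Sobolev and H\"older), so the approximating $\Psi_n$ do solve the $H^2$-problem on the common interval.
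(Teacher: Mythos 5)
Your proposal is correct and follows essentially the same route as the paper: multiply the equation by $\bar\Psi$, integrate by parts so that the real potential and power terms contribute nothing to $\frac{d}{dt}\|\Psi\|_{L^2}^2$, and then remove the extra regularity assumption by a density/approximation argument combined with continuous dependence on the data (the paper assumes $\Psi\in C([0,T],H^1)$ and cites Cazenave for the density step; you regularize at the $H^2$ level, which is the same idea carried out in slightly more detail).
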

\begin{proof}
We assume $\Psi \in C([0,T],H^1(\R)) $ and we multiply by $\bar{\Psi}$ the equation \eqref{sps1}. The \eqref{massa} follows by integration by parts. Density arguments give us the general statement (for a detailed proof see \cite{cazenave} Section 4.6).
\end{proof}

Our goal is to try to extend the solution to all times. At first we define the \emph{maximal solution} using the uniqueness for small time.
\begin{definition}
Let $f\in L^2$. Let
\begin{align*}
T_{max}&= \sup \{T>0, \text{such that \eqref{sps1} has a solution in } [0,T]\},\\
T_{min}&= \sup \{T>0, \text{such that \eqref{sps1} has a solution in } [-T,0]\}.
\end{align*}
The uniqueness for small time allows us to define the \emph{maximal solution} $$\Psi \in  C([-T_{min},T_{max}],L^2).$$
\end{definition}

\begin{prop}[Blow-up alternative]
Let $T_{max}<\infty$ (respectively , if $T_{min}<\infty$ ), then, under the hypothesis of the Theorem \ref{lex} we have
  \begin{equation}\label{blowup}
\lim_{t \nearrow  T_{max}}\|\Psi(t,\cdot)\|_{L^2(\R)}=\infty \,\,\,(\lim_{t \searrow T_{min}}\|\Psi(t,\cdot)\|_{L^2(\R)}=\infty).
\end{equation}
\end{prop}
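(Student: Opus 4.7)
The plan is to prove the contrapositive by contradiction. I would suppose $T_{max}<\infty$ but that the limit does not diverge, so that there exist $M>0$ and a sequence $t_n\nearrow T_{max}$ with $\|\Psi(t_n)\|_{L^2}\le M$. The strategy is then to restart the local Cauchy theory from each time $t_n$, exploiting the fact that the existence time in Theorem \ref{lex} depends only on the $L^2$ norm of the initial datum, and to glue the pieces together past $T_{max}$ to contradict maximality.

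First I would revisit the proof of Theorem \ref{lex} to record explicitly that the existence time $T$ depends only on $\|f\|_{L^2}$, and in fact monotonically. Indeed, the contraction argument goes through as soon as $T$ satisfies a smallness condition of the form
\[
C\,T^{1/2+\sigma/2}\|f\|_{L^2}^{2}+C\,T^{(5-\gamma)/2}\|f\|_{L^2}^{\gamma-1}\le\tfrac14,
\]
so any uniform bound $\|f\|_{L^2}\le M$ yields a uniform lower bound $T^\ast=T^\ast(M)>0$ on the local existence time. Applying Theorem \ref{lex} to the time-translated Cauchy problem with initial datum $\Psi(t_n,\cdot)$ at time $t_n$ then produces, for every $n$, a solution $v_n\in C([t_n,t_n+T^\ast],L^2)\cap L^q([t_n,t_n+T^\ast],L^r)$.

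Next I would pick $n$ so large that $t_n+T^\ast>T_{max}$, which is possible because $T^\ast$ is a fixed positive number independent of $n$ while $t_n\to T_{max}$. On the overlap $[t_n,T_{max})$, both $\Psi$ and $v_n$ satisfy the integral equation \eqref{mappa} with the same data at $t=t_n$, so the uniqueness in Theorem \ref{lex} (applied on successive short subintervals if needed) forces $v_n\equiv\Psi$ there. The concatenation
\[
\widehat\Psi(t)=\begin{cases}\Psi(t),& t\in[0,t_n],\\ v_n(t),& t\in[t_n,t_n+T^\ast],\end{cases}
\]
is then a bona fide solution of \eqref{sps1}-\eqref{sps3} on an interval strictly larger than $[0,T_{max})$, contradicting the definition of $T_{max}$. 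The case $T_{min}<\infty$ is handled identically after a time reversal.

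The main obstacle, and really the only point requiring genuine care, is the uniformity of the existence time $T^\ast$ as a function of the $L^2$ norm: one must re-examine the contraction step inside Theorem \ref{lex} to confirm that $T$ can be selected as a decreasing function of $\|f\|_{L^2}$ alone (no hidden dependence on the particular solution). Once this is in hand, the extension by continuation, the identification via uniqueness, and the concatenation are entirely routine.
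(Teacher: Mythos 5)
Your argument is correct and is essentially the paper's own proof: the paper likewise negates the conclusion to get a sequence $t_n\nearrow T_{max}$ with $\|\Psi(t_n)\|_{L^2}\le M$, invokes the uniform local existence time $T(M)$ from Theorem \ref{lex}, picks $k$ with $t_k+T(M)>T_{max}$, and restarts from $\Psi(t_k)$ to contradict maximality. Your additional remarks on the monotone dependence of $T$ on $\|f\|_{L^2}$ and on the uniqueness-based gluing only make explicit what the paper leaves implicit.
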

\begin{proof}
Let $T_{max}<\infty$. Assume that there exist $M<\infty$ and a sequence $t_n \nearrow T_{max}$ such that $\|\Psi (t_n)\|_{L^2(\R)}\leq M$. \\

We consider $ k \in \mathbf N $, such that $t_k+T(M)>T_{max}$, where $[0,T(M)]$ denotes the maximal existence interval of a solution with initial data of $L^2$-norm equals to $M$. \\
By Theorem \ref{lex} and starting from $f=\Psi(t_k)$, we can extend $\Psi$ up to $t_k+T(M)$, which contradicts maximality.
\end{proof}

\begin{cor}\label{gser}
Let $f\in L^2$, $1<\gamma<5$ and $\sigma\in I_\gamma$. The Cauchy problem \eqref{sps1}-\eqref{sps3} has a unique global solution $\Psi \in  C(\R,L^2(\R))\cap L^q(\R,L^r(\R))$, for any $(q,r)$ admissible pair.
\end{cor}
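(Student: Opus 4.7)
The plan is to deduce the corollary by combining the local existence theorem (Theorem \ref{lex}), the mass conservation (Lemma \ref{conservationmass}), and the blow-up alternative just proved. The crucial point, noted already in the statement of Theorem \ref{lex}, is that the local existence time $T=T(\|f\|_{L^2})$ depends on the initial datum \emph{only} through its $L^2$-norm. This, paired with a conserved quantity controlling exactly that norm, is precisely the standard mechanism that promotes a local theory to a global one.

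I would proceed by contradiction for the maximal time of existence. Assume $T_{max}<\infty$ and let $\Psi\in C([-T_{min},T_{max}),L^2)$ denote the maximal solution furnished by the local theory and the uniqueness for small time. By the blow-up alternative,
\[
\lim_{t\nearrow T_{max}}\|\Psi(t,\cdot)\|_{L^2}=\infty.
\]
On the other hand, Lemma \ref{conservationmass} applied on any interval $[0,T]$ with $T<T_{max}$ yields $\|\Psi(t,\cdot)\|_{L^2}=\|f\|_{L^2}$ for all such $t$. These two statements are incompatible, so $T_{max}=+\infty$. The argument for $T_{min}=+\infty$ is identical by time-reversal symmetry of the equation.

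For the Strichartz integrability on any finite window $[0,T]$, I would iterate Theorem \ref{lex}. Setting $T_0:=T(\|f\|_{L^2})$ and choosing $N$ with $NT_0\ge T$, one applies Theorem \ref{lex} on each slab $[kT_0,(k+1)T_0]$, $k=0,\dots,N-1$, with the initial datum $\Psi(kT_0)$; by mass conservation, $\|\Psi(kT_0)\|_{L^2}=\|f\|_{L^2}$, so the same local time $T_0$ is admissible at every step, and uniqueness in Theorem \ref{lex} guarantees that the resulting pieces coincide with the maximal solution on their overlap. On each slab one has the bound $\|\Psi\|_{L^qL^r}\le 3C\|f\|_{L^2}$; summing in $k$ gives $\Psi\in L^q([0,T],L^r(\R))$ for every $T>0$ and every admissible pair $(q,r)$, which is the global statement claimed. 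There is no real obstacle here once the blow-up alternative and mass conservation are in hand; the only care required is verifying that the iterated pieces glue into a continuous $L^2$-valued trajectory, which is immediate from the integral formulation \eqref{mappa} and the uniqueness clause of Theorem \ref{lex}.
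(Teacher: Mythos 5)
Your argument is correct and is exactly the paper's proof, which consists of the single sentence ``By the mass conservation and by the blow-up alternative we have that the local solution is actually global''; you have simply spelled out the contradiction and the slab-by-slab iteration that the paper leaves implicit. No discrepancy to report.
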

\begin{proof}
By the mass conservation and by the blow-up alternative we have that the local solution is actually global.
\end{proof}

\begin{thm}[local existence $L^2$-critical with large data]\label{lecld}
Let $f\in L^2$, $\gamma=5$ and $\sigma\in(0,1/10]$. There exists a maximal interval $(-T_{min},T_{max})$, $T_{min}=T_{min}(f)$ and $T_{max}=T_{max}(f)$, such that the Cauchy problem \eqref{sps1}-\eqref{sps3} has a unique solution ,$\Psi$, such that
$$\Psi \in  C([-T_{min},T_{max}],L^2(\R))\cap L^q([-T_{min},T_{max}],L^r(\R)),$$
 for any $(q,r)$ admissible pair.
\end{thm}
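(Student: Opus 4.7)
The plan is to adapt the contraction argument of Theorem~\ref{lex} to the $L^2$-critical exponent $\gamma=5$. The admissible triple $(q,r)$, $(\tilde q',\tilde r')$, $(\tilde q_1',\tilde r_1')$ constructed in Step~1 of Theorem~\ref{lex} remains available in the regime $\sigma\in(0,1/10]$, $\gamma=5$: the set $I_{\sigma,\gamma}$ is nonempty there, collapsing only at the endpoint $\sigma=1/10$. The genuinely new difficulty, and the main obstacle, is that the factor $T^{(5-\gamma)/2}$ that multiplied the local nonlinearity estimate degenerates to $T^0=1$, so largeness of the initial datum can no longer be absorbed by shortening the time interval.

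The fix is to trade the single ball of radius $M\sim\|f\|_{L^2}$ for a two-scale ball
\[
X_{T,\eta}=\left\{\Psi\in L^\infty L^2\cap L^q L^r:\ \Psi(0)=f,\ \|\Psi\|_{L^\infty L^2}\le 2C\|f\|_{L^2},\ \|\Psi\|_{L^q L^r}\le 2\eta\right\},
\]
where $\eta>0$ is a small absolute constant fixed so that $C(2\eta)^4\le 1/4$, and $T=T(f)>0$ is chosen so that the linear flow is small in the critical Strichartz norm,
\[
\|S(\cdot)f\|_{L^q([0,T],L^r)}\le\eta.
\]
Such a $T$ exists because the global Strichartz estimate \eqref{1str} gives $\|S(\cdot)f\|_{L^q(\R,L^r)}<\infty$, so absolute continuity of the $L^q$ integral yields the required smallness on a short initial interval. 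Crucially, $T$ now depends on the full profile of $f$, not merely on $\|f\|_{L^2}$; this is the signature of critical local theory.

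With this setup the Strichartz/H\"older/Hardy--Littlewood--Sobolev chain used in \eqref{pallainpalla} gives
\[
\|\mathscr H\Psi\|_{L^q L^r}\le\|S(\cdot)f\|_{L^q([0,T],L^r)}+CT^{1/2+\sigma/2}\|\Psi\|_{L^q L^r}^3+C\|\Psi\|_{L^q L^r}^5,
\]
together with the analogous bound for $\|\mathscr H\Psi\|_{L^\infty L^2}$ obtained by using the admissible pair $(\infty,2)$ on the left. For $\Psi\in X_{T,\eta}$ the critical quintic is $C(2\eta)^5=C(2\eta)^4\cdot 2\eta\le\eta/2$, while the Hartree cubic is further reduced by taking $T$ even smaller; hence $\mathscr H$ preserves $X_{T,\eta}$. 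The contraction estimate mirrors \eqref{getcontraction}: the quintic difference is bounded by $C\bigl(\|\Psi_1\|_{L^q L^r}^4+\|\Psi_2\|_{L^q L^r}^4\bigr)\|\Psi_1-\Psi_2\|_{L^q L^r}\le\tfrac12\|\Psi_1-\Psi_2\|_{L^q L^r}$, and the Hartree difference by a small power of $T$, so $\mathscr H$ is a strict contraction and the Banach fixed-point theorem yields a unique $\Psi$ on $[0,T]$.

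Continuous dependence on data follows from the same difference estimate, and the maximal interval $(-T_{min},T_{max})$ is obtained by the standard extension-by-uniqueness argument (restart the fixed-point construction from $\Psi(T_0)$ for every $T_0<T_{max}$). Once $\Psi\in L^q_T L^r$ has been produced, the full $L^{q_1}_T L^{r_1}$ regularity for every admissible pair $(q_1,r_1)$ follows from the inhomogeneous Strichartz estimate \eqref{3str} applied to the identity $\Psi=\mathscr H(\Psi)$. The only substantive novelty compared to Theorem~\ref{lex} is the shift in the smallness mechanism: from shortness of $T$ applied to a polynomial in $\|f\|_{L^2}$ to smallness of the linear Strichartz norm $\|S(\cdot)f\|_{L^q([0,T],L^r)}$, extracted from absolute continuity.
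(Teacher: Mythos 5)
Your proposal is correct and follows essentially the same route as the paper: the key mechanism in both is to extract smallness of $\|S(\cdot)f\|_{L^q([0,T],L^r)}$ on a short initial interval via the global Strichartz bound \eqref{1str} and absolute continuity of the integral, so that the non-degenerating quintic term can be closed in a small critical Strichartz ball, with the $L^\infty L^2$ bound recovered from the Duhamel identity. Your version merely makes explicit the two-norm bookkeeping and the contraction constants that the paper leaves implicit.
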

\begin{proof}
We proceed in a similar way to how we did in the subcritical case and we use the same notations of the Theorem \ref{lex}. The difficulty, in the critical case, lies in the local nonlinear term $|\Psi|^{4}\Psi$. Indeed, let $T>0$, by Strichartz estimates we have that 
\begin{equation*}
   \| \mathscr H (\Psi)\|_{L^{q}([0,T],L^r)}\leq \|S(t)f\|_{L^{q}([0,T],L^r)}+C T^{\frac{1+\sigma}{2}}\left\|  \Psi\right\|^{3}_{L^{q}([0,T],L^r)} +C\left\|  \Psi\right\|^{5}_{L^{q}([0,T],L^r)}.
  \end{equation*} 
  By the Strichartz estimate \eqref{1str} and by absolute continuity of the Lebesgue integral, if $T$ is suitably small, we have that $\|S(t)f\|_{L^{q}([0,T],L^r)}<\delta$, for some small $\delta$ depending on $f$ and on the constant in the Strichartz estimates. Hence,
  \begin{equation}\label{palla}
  \| \mathscr H (\Psi)\|_{L^{q}([0,T],L^r)}\leq \delta+\delta +C\left\|  \Psi\right\|_{L^{q}([0,T],L^r)}^{5}.
  \end{equation}
  We choose $M=3\delta$, for small $\delta$, i.e. for time interval sufficiently small. So, we have a unique fixed point $\Psi\in L^{q}([0,T],L^r) $, which locally solves the Cauchy problem. \\
  In order to conclude the proof we will prove that $\Psi$ is actually also $L^{\infty}([0,T],L^2)$.\\
  For the \eqref{palla}, we have that $\|\Psi\|_{L^{q}([0,T],L^r)}<\infty$. By Strichartz estimates we have
  \begin{equation*}
  \|\Psi\|_{L^\infty([0,T],L^2)}\leq \|f\|_{L^2}+C T^{\frac{1+\sigma}{2}}\left\|  \Psi\right\|^{3}_{L^{q}([0,T],L^r)}+C\left\|  \Psi\right\|_{L^{q}([0,T],L^r)}^{5}.
  \end{equation*}
\end{proof}

\begin{thm}[global existence $L^2$-critical small data]\label{gecsd}
Let $f\in L^2(\R)$, $\gamma=5$ and $\sigma\in(0,1/10]$. There exists a small $\delta>0$ such that, if $\|f\|_{L^2}\leq \delta$, then the Cauchy problem \eqref{sps1}-\eqref{sps3} has a unique global solution $\Psi \in  C(\R,L^2(\R))\cap L^q(\R,L^r(\R))$, for any $(q,r)$ admissible pair.
\end{thm}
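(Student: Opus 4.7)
The plan is to re-run the contraction argument of Theorem~\ref{lecld}, but now globally in time on $\R$, with the smallness of $\|f\|_{L^2}$ playing the role of the smallness of the time interval. I would fix an admissible pair $(q,r)$ and work in the closed subset
\begin{equation*}
X_0=\{\Psi\in L^\infty(\R,L^2)\cap L^q(\R,L^r)\colon \Psi(0)=f,\ \|\Psi\|_{X_0}\leq M\},\qquad \|\Psi\|_{X_0}=\|\Psi\|_{L^\infty L^2}+\|\Psi\|_{L^qL^r},
\end{equation*}
with $M=3C\delta$ and $C$ a uniform Strichartz constant. The global Strichartz bound \eqref{1str} gives $\|S(\cdot)f\|_{X_0}\leq C\|f\|_{L^2}\leq C\delta$ on all of $\R$, which is what a short $T$ provides in the proof of Theorem~\ref{lecld}. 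Since $\gamma=5$ is $L^2$-critical, the local nonlinearity is immediately under control via Strichartz \eqref{3str} with the dual admissible pair $(q/5,r/5)$, yielding
\begin{equation*}
\Bigl\|\int_0^t S(t-s)|\Psi|^{4}\Psi(s)\,ds\Bigr\|_{X_0}\leq C\|\Psi\|_{L^qL^r}^{5}\leq CM^{5},
\end{equation*}
with no time factor at all.

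The delicate piece is the nonlocal Hartree-type term $[|\cdot|^{-(1-\sigma)}*|\Psi|^2]\Psi$. A direct copy of Step~2 of Theorem~\ref{lex} produces a factor $T^{(1+\sigma)/2}$ from H\"older in time; a scaling check (the nonlocal term scales as $\lambda^{\sigma+1}$ \emph{more} than $\|\Psi\|^{3}$ measured in any admissible Strichartz norm) shows that, for $\sigma>0$, no admissible pair $(\tilde q,\tilde r)$ can remove this $T$ factor in a single global estimate. To bypass this obstruction I would iterate the critical local theorem on consecutive slabs: pick $T_0=T_0(\delta)>0$ so that $T_0^{(1+\sigma)/2}\delta^{2}$ is smaller than an absolute constant; the critical contraction of Theorem~\ref{lecld} then closes on $[0,T_0]$ for every initial datum of $L^2$-norm at most $\delta$. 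By Lemma~\ref{conservationmass} the mass is conserved, so $\|\Psi(nT_0)\|_{L^2}=\|f\|_{L^2}\leq\delta$ at each restart $nT_0$, and the same $T_0$ together with the same contraction constants work at every step; an inductive extension yields a solution on $[0,\infty)$, and a mirror construction covers $(-\infty,0]$, producing $\Psi\in C(\R,L^2)$.

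To upgrade this to $\Psi\in L^q(\R,L^r)$ for every admissible pair, I plug the solution back into \eqref{mappa} and apply the Strichartz estimates \eqref{1str}--\eqref{3str} globally in time, controlling the local quintic by $\|\Psi\|_{L^qL^r}^{5}$ and the nonlocal contribution through the block-by-block iteration, then using $\|S(\cdot)f\|_{L^q(\R,L^r)}\leq C\delta$ and the smallness of $\delta$ to close a global bootstrap. Uniqueness and continuous dependence on the datum are inherited, line for line, from the corresponding steps of Theorem~\ref{lex}. The main obstacle is clearly the nonlocal term: its scaling-subcriticality for $\sigma\in(0,1)$ rules out a single global fixed-point closed with admissible Strichartz norms alone, and the iteration-plus-mass-conservation scheme is the substitute that makes the global small-data critical result possible.
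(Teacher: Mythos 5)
Your argument is essentially the paper's: the paper likewise closes a local contraction with $M=3C\delta$ and $T=T(\delta)$ (the quintic term controlled purely by the smallness of $\delta$, with no time factor, and the nonlocal term by the $T^{(1+\sigma)/2}$ factor), and then globalizes ``as in the subcritical case'' via mass conservation and the blow-up alternative --- which is exactly your slab-by-slab iteration with a uniform $T_0(\delta)$, merely left implicit. The one caveat, common to your write-up and to the paper's one-line conclusion, is that the claimed global membership $\Psi\in L^q(\R,L^r)$ is not actually established by either argument: summing the per-slab $L^qL^r$ bounds over infinitely many slabs diverges, and the $T^{(1+\sigma)/2}$ factor on the nonlocal term (which you correctly identify as the obstruction) also blocks the single global space-time bootstrap you sketch in your last paragraph.
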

\begin{proof}
By the condition $\gamma=5$ follows that
\begin{equation*}
\|\mathscr H \Psi \|_{X_0}\leq \delta +C T^{\frac{1+\sigma}{2}}\left\|  \Psi\right\|^{3}_{X_0}+\|\Psi\|_{X_0}^5.
\end{equation*}
So, if we choose $T=T(\delta)$ and $M= 3C \delta$, for $\delta$ sufficiently small we have that the ball with radius $M$ in $X_0$ is mapped into itself by $\mathscr H$. Similarly we prove that $\mathscr H $ is a contraction. As in subcritical case we deduce first the local well-posedness and then the global result.
\end{proof}

\subsection{Well-posed problems with initial data in $H^1$}
Here we want to perform the same previous results about well-posedness in the space $H^1(\R)$. In this case, with regard to global well-posed problem, the defocusing and focusing case are situations more different. In the defocusing case, thanks to contraction arguments and energy conservation, we have the same results of $L^2$-theory. Therefore, we focus our attenction on focusing case that, already in subcritical case, is quite complicated.\\

At first we construct the local solution in $C([0,T],H^1(\R))$, in subcritical focusing and defocusing case, with a fixed point argument.
\begin{thm}[local existence $H^1$-subcritical]\label{lth1}
Let $f\in H^1(\R)$, $1<\gamma<5$ and $\alpha=\pm1$.\\
Then, there exists an interval $I_\gamma\subseteq (0,1)$, such that, for all $\sigma\in I_\gamma$ one can find a time $T=T(\|f\|_{H^1})>0$ and a unique wave function $\Psi$, 
$$\Psi\colon[0,T]\times \R\to \C,$$
solution of the problem \eqref{sps1}-\eqref{sps3}. \\
In addition, we have
\begin{equation}\label{regolonda}
\Psi \in C([0,T],H^1(\R))\cap L^q([0,T],W^{1,r}(\R)),
\end{equation}
for any $(q,r)$ admissible pair.
\end{thm}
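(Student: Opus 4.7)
The strategy is to repeat the contraction argument of Theorem \ref{lex} one derivative higher. Fix $\gamma\in(1,5)$ and $\sigma\in I_\gamma$ as in the $L^2$ theory, and choose the admissible pair $(q,r)$ and dual pairs $(\tilde q',\tilde r')$, $(\tilde q_1',\tilde r_1')$ exactly as in Step 1 of Theorem \ref{lex}. For $T,M>0$ to be fixed, consider
\begin{equation*}
X_1=\Bigl\{\Psi\in L^\infty([0,T],H^1(\R))\cap L^q([0,T],W^{1,r}(\R))\,:\,\Psi(0)=f,\ \|\Psi\|_{X_1}\le M\Bigr\},
\end{equation*}
with $\|\Psi\|_{X_1}=\|\Psi\|_{L^\infty H^1}+\|\Psi\|_{L^q W^{1,r}}$. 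The plan is to show that the map $\mathscr H$ defined in \eqref{mappa} sends the closed ball of radius $M$ in $X_1$ into itself and is a contraction there, then invoke the Banach fixed point theorem.

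The core step is to apply the Strichartz estimates \eqref{1str}--\eqref{3str} both to $\mathscr H\Psi$ and to $\partial_x\mathscr H\Psi$ (using that $\partial_x$ commutes with $S(t)$). The linear part contributes $C\|f\|_{H^1}$. For the nonlocal term, differentiation gives
\begin{equation*}
\partial_x\bigl([|\cdot|^{-(1-\sigma)}*|\Psi|^2]\,\Psi\bigr)
=[|\cdot|^{-(1-\sigma)}*|\Psi|^2]\,\partial_x\Psi
+\bigl[|\cdot|^{-(1-\sigma)}*\partial_x(|\Psi|^2)\bigr]\,\Psi,
\end{equation*}
and since $\partial_x(|\Psi|^2)=2\mathrm{Re}(\bar\Psi\,\partial_x\Psi)$, a repetition of the Hardy--Littlewood--Sobolev plus H\"older computation in \eqref{pallainpalla}, with one factor of $\Psi$ replaced by $\partial_x\Psi$, bounds both pieces in $L^{\tilde q'}L^{\tilde r'}$ by $C\,T^{(1+\sigma)/2}\|\Psi\|_{L^q L^r}^{2}\|\Psi\|_{L^q W^{1,r}}\le C T^{(1+\sigma)/2}\|\Psi\|_{X_1}^{3}$. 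For the local nonlinearity, the pointwise bound
\begin{equation*}
\bigl|\partial_x(|\Psi|^{\gamma-1}\Psi)\bigr|\le C\,|\Psi|^{\gamma-1}\,|\partial_x\Psi|
\end{equation*}
together with H\"older in time and space gives $\|\partial_x(|\Psi|^{\gamma-1}\Psi)\|_{L^{\tilde q_1'}L^{\tilde r_1'}}\le C\,T^{(5-\gamma)/2}\|\Psi\|_{L^q L^r}^{\gamma-1}\|\partial_x\Psi\|_{L^q L^r}\le C T^{(5-\gamma)/2}\|\Psi\|_{X_1}^{\gamma}$. Combining these with the corresponding undifferentiated estimates from \eqref{pallainpalla}, we obtain
\begin{equation*}
\|\mathscr H\Psi\|_{X_1}\le C\|f\|_{H^1}+C T^{(1+\sigma)/2}\|\Psi\|_{X_1}^{3}+C T^{(5-\gamma)/2}\|\Psi\|_{X_1}^{\gamma}.
\end{equation*}
Setting $M=3C\|f\|_{H^1}$ and choosing $T=T(\|f\|_{H^1})>0$ small enough produces the self-map property.

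The contraction bound is obtained by the same pointwise inequalities used in Theorem \ref{lex} for the undifferentiated difference, plus the analogous inequality for the derivative difference; for the local term one uses
\begin{equation*}
\bigl|\partial_x(|\Psi_1|^{\gamma-1}\Psi_1)-\partial_x(|\Psi_2|^{\gamma-1}\Psi_2)\bigr|
\le C\bigl(|\Psi_1|^{\gamma-1}+|\Psi_2|^{\gamma-1}\bigr)|\partial_x\Psi_1-\partial_x\Psi_2|
+C\bigl(|\Psi_1|^{\gamma-2}+|\Psi_2|^{\gamma-2}\bigr)|\Psi_1-\Psi_2|\bigl(|\partial_x\Psi_1|+|\partial_x\Psi_2|\bigr),
\end{equation*}
for $\gamma\ge 2$, with an analogous H\"older-continuity version when $1<\gamma<2$. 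Shrinking $T$ if necessary yields $\|\mathscr H\Psi_1-\mathscr H\Psi_2\|_{X_1}\le\tfrac12\|\Psi_1-\Psi_2\|_{X_1}$, and the Banach fixed point theorem delivers a unique $\Psi\in X_1$ with $\mathscr H\Psi=\Psi$. Continuity in time $\Psi\in C([0,T],H^1)$ follows a posteriori from \eqref{mappa}, and membership in $L^q([0,T],W^{1,r})$ for an arbitrary admissible $(q,r)$ follows from one more application of the Strichartz estimates to $\Psi$ and $\partial_x\Psi$, exactly as in Theorem \ref{lex}. Continuous dependence on the data is again obtained from the contraction bound applied to two initial conditions.

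The step I expect to be most delicate is the pointwise control of $\partial_x(|\Psi|^{\gamma-1}\Psi)$ and its difference for non-integer exponents $\gamma\in(1,5)\setminus\mathbb{Z}$: the function $z\mapsto |z|^{\gamma-1}z$ is only $C^1$ for $\gamma\ge 2$ and merely H\"older of order $\gamma-1$ otherwise, so for $1<\gamma<2$ one must replace the naive mean value estimate by a fractional/H\"older version (or restrict to real and imaginary parts and apply Strichartz at the level of differences directly). All other computations are structurally identical to Theorem \ref{lex}, just with every $L^r$-norm promoted to its $W^{1,r}$ counterpart.
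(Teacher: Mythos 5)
Your proposal follows essentially the same route as the paper: the paper also sets up the contraction in $L^\infty H^1\cap L^q W^{1,r}$, records the same two pointwise bounds $\left|\partial_x(|\Psi|^{\gamma-1}\Psi)\right|\le C|\Psi|^{\gamma-1}|\partial_x\Psi|$ and $\left|\partial_x\bigl([|\cdot|^{-(1-\sigma)}*|\Psi|^2]\Psi\bigr)\right|\le 2\,[|\cdot|^{-(1-\sigma)}*(|\Psi||\partial_x\Psi|)]\,|\Psi|+[|\cdot|^{-(1-\sigma)}*|\Psi|^2]\,|\partial_x\Psi|$, and then repeats the Strichartz--H\"older--Hardy-Littlewood-Sobolev scheme of Theorem \ref{lex}. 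Your additional remark about the mere H\"older regularity of $z\mapsto|z|^{\gamma-1}z$ when $1<\gamma<2$ is a genuine point of care that the paper's sketch does not address explicitly, but it does not change the structure of the argument.
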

\begin{proof}
The construction of the local $H^1$-solution is entirely similar to the construction of the local solution in $L^2$-theory. We give only a sketch of the proof. 
We introduce the Banach space  $X_0$ defined as following:
\begin{equation*}
X_0=\left\lbrace  \Psi \in L^\infty H^1\cap L^q W^{1,r}, \Psi(0)=f, \|\Psi\|_{X_0}= \|\Psi\|_{L^\infty H^1}+ \|\Psi\|_{L^q W^{1,r}}\leq M\right\rbrace. 
\end{equation*}
Let $\Psi_1,\Psi_2\in X_0$. We have that the inequalities
\begin{equation*}
\left|\nabla[\Psi_{1}|\Psi_{1}|^{\gamma-1}] \right| \leq 2C |\Psi_1|^{\gamma-1}|\nabla\Psi_1|,
\end{equation*}
and
\begin{equation*}
\left|\nabla[ (|\cdot|^{-(1-\sigma)}*|\Psi_{1}|^2)\Psi_{1}] \right| \leq 2 [|\cdot|^{-(1-\sigma)}*(\Psi_1\nabla\Psi_1)]|\Psi_1|+(|\cdot|^{-(1-\sigma)}*|\Psi_1|^2)|\nabla\Psi_1|
\end{equation*}
come true almost everywhere.\\ 
Moreover, we have similar estimates for the difference of the gradients.

 Thanks to these inequalities we construct the solution as a fixed point of the contraction map $\mathscr H$ in $X_0$ with the same arguments of the Theorem \ref{lex}.
 \end{proof}
\begin{definition}[Energy]
Let $\Psi \in C([0,T],H^1)$, with $T>0$. We define the \textit{energy} of the system \eqref{sps1}-\eqref{sps3} as follows:
\begin{equation}\label{energy}
E(t)= \frac{1}{4}\|\nabla \Psi(t)\|_{L^2}^2+\frac{1}{4} \int_{\R}A_0|\Psi|^2(t,x)\,dx +\frac{\alpha}{\gamma+1}\int_{\R}|\Psi|^{\gamma+1}(t,x)\,dx,
\end{equation}
for any $t\in [0,T]$. \\
Sobolev embedding ensure that the energy is well-defined.
\begin{lem}[Conservation energy]
Let $\Psi \in C([0,T],H^1(\R)) $ be a local solution of the Cauchy problem \eqref{sps1}-\eqref{sps3}. Then 
\begin{equation}\label{energycons}
E(t)=E(0),
\end{equation}
for all times $t\in [0,T]$.
\end{lem}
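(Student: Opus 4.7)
The plan is to follow the classical NLS energy-conservation scheme: formally differentiate $E(t)$ in time, substitute $\tfrac12\Delta\Psi$ from equation \eqref{sps1}, and verify a term-by-term cancellation; then justify the manipulation by an $H^2$-approximation in the data. Under the temporary hypothesis that $\Psi\in C([0,T],H^2(\R))$, the equation makes $\partial_t\Psi\in C([0,T],L^2(\R))$, so each of the three summands of $E$ can be differentiated rigorously. The kinetic piece yields $-\tfrac12\,\mathrm{Re}\!\int\overline{\partial_t\Psi}\,\Delta\Psi\,dx$ after one integration by parts, the local nonlinear piece yields $\alpha\,\mathrm{Re}\!\int|\Psi|^{\gamma-1}\bar\Psi\,\partial_t\Psi\,dx$ by the chain rule applied to $|\Psi|^{\gamma+1}$, and the nonlocal piece yields $\mathrm{Re}\!\int A_0\bar\Psi\,\partial_t\Psi\,dx$, the factor $2$ arising from the $x\leftrightarrow y$ symmetry of the kernel $|x-y|^{-(1-\sigma)}$ in the double-integral representation of $\int A_0|\Psi|^2$ guaranteed by \eqref{convolform}.

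Next I would substitute $\tfrac12\Delta\Psi=-i\partial_t\Psi+A_0\Psi+\alpha|\Psi|^{\gamma-1}\Psi$ into the kinetic contribution. The $-i\partial_t\Psi$ part yields a purely imaginary quantity whose real part is $-\mathrm{Re}(-i\|\partial_t\Psi\|_{L^2}^2)=0$, while the $A_0\Psi$ and $\alpha|\Psi|^{\gamma-1}\Psi$ parts produce exactly the opposites of the nonlocal and local derivatives computed above. The three contributions cancel and $\tfrac{d}{dt}E\equiv 0$, proving conservation for $H^2$-solutions.

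To upgrade to the $H^1$-statement, I would pick $f_n\in H^2(\R)$ with $f_n\to f$ in $H^1$ and run the fixed-point argument of Theorem \ref{lth1} at the $H^2$-level; since the life-span depends only on the $H^1$-norm, the corresponding solutions $\Psi_n$ live on the common interval $[0,T]$, and a Gronwall bound on $\|\nabla^2\Psi_n\|_{L^2}$ keeps them in $C([0,T],H^2(\R))$. Continuous dependence, already embedded in the proof of Theorem \ref{lth1}, gives $\Psi_n\to\Psi$ in $C([0,T],H^1(\R))$. I would then verify continuity of $E$ on $H^1(\R)$: the kinetic term is trivially continuous; the $L^{\gamma+1}$ term is continuous by the one-dimensional Sobolev embedding $H^1\hookrightarrow L^\infty\cap L^{\gamma+1}$; and for the nonlocal term $H^1\hookrightarrow L^\infty\cap L^2$ yields $|\Psi_n|^2\to|\Psi|^2$ in every $L^q(\R)$, so Lemma \ref{hls} combined with H\"older passes the double integral $\iint|x-y|^{-(1-\sigma)}|\Psi_n|^2|\Psi_n|^2\,dx\,dy$ to the limit. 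Sending $n\to\infty$ in $E_n(t)=E_n(0)$ then gives the lemma.

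The main obstacle is purely regularity-based: at the $H^1$-level $\partial_t\Psi$ only lies in $H^{-1}$, so the pairing $\int\overline{\partial_t\Psi}\,\Delta\Psi\,dx$ is not a priori a genuine $L^2$-integral and the formal manipulation is not legitimate; the technical heart of the proof is therefore the persistence of $H^2$-regularity for the approximating solutions together with the continuity of the nonlocal summand of $E$ with respect to the $H^1$-topology, both of which reduce in one space dimension to standard Sobolev embedding and the Hardy--Littlewood--Sobolev inequality.
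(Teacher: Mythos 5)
Your proposal is correct and follows essentially the same route as the paper: a formal computation at the $C^1([0,T],H^2)$ level of regularity (equivalent to multiplying \eqref{sps1} by $\partial_t\bar\Psi$ and taking real parts), followed by an $H^2$-approximation of the data combined with continuous dependence and the continuity of $E$ on $H^1(\R)$. You actually spell out the density step and the handling of the nonlocal term in more detail than the paper does, but the underlying argument is the same.
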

\begin{proof}
 We assume $\Psi \in C^{1}([0,T],H^2)$. Multiplying by $\partial_t\bar{\Psi}$ the equation \eqref{sps1}, similarly to Lemma \ref{conservationmass}, we deduce the conservation energy. Thanks to continuous dependence on initial data guaranteed by the local well-posedness (Theorem \ref{lth1}), density arguments prove that the quantity \eqref{energy} is a constant during the evolution of the system. 
 \end{proof}
\end{definition}

\begin{prop}[Blow-up alternative]\label{bualternative}
Let $T_{max}<\infty$ (respectively , if $T_{min}<\infty$ ), then, under the hypothesis of the Theorem \ref{lth1} we have
  \begin{equation}\label{blowup}
\lim_{t \nearrow  T_{max}}\|\Psi(t,\cdot)\|_{H^1(\R)}=\infty \,\,\,(\lim_{t \searrow T_{min}}\|\Psi(t,\cdot)\|_{H^1(\R)}=\infty).
\end{equation}
\end{prop}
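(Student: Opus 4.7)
The plan is to argue by contradiction in direct analogy with the $L^2$ blow-up alternative proved earlier. Assume that $T_{max}<\infty$ but that the $H^1$ norm does \emph{not} blow up, so there exist a finite $M>0$ and a sequence $t_n\nearrow T_{max}$ with $\|\Psi(t_n,\cdot)\|_{H^1(\R)}\le M$ for every $n$.

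The crucial structural input is that the local existence time $T(\|f\|_{H^1})$ produced by Theorem \ref{lth1} depends on the initial datum only through its $H^1$ norm; this is visible from Step 1--Step 2 of the proof of Theorem \ref{lex} transplanted to the $H^1$ setting, where $T$ is selected solely from the quantities $\|f\|_{H^1}$, the Strichartz constants, and the Hardy--Littlewood--Sobolev constant. Therefore there is a uniform existence time $T(M)>0$ valid for every initial datum of $H^1$-norm at most $M$. Pick $k$ so large that $t_k+T(M)>T_{max}$, which is possible because $t_n\nearrow T_{max}$. Applying Theorem \ref{lth1} with initial datum $\Psi(t_k,\cdot)$ produces a solution on $[t_k,t_k+T(M)]$, which, by the uniqueness part of Theorem \ref{lth1}, glues onto the maximal solution and extends it strictly beyond $T_{max}$. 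This contradicts the definition of $T_{max}$, hence $\|\Psi(t)\|_{H^1}\to\infty$ as $t\nearrow T_{max}$. The case $T_{min}<\infty$ is identical by time reversal $t\mapsto -t$.

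The main (mild) obstacle is purely bookkeeping: one must verify that the local existence time in the $H^1$ contraction scheme really is a function of $\|f\|_{H^1}$ alone, not of finer information about $f$. Once the nonlocal term is estimated by $\||\cdot|^{-(1-\sigma)}\ast|\Psi|^{2}\|_{L^{r/(2-r\sigma)}}\lesssim \||\Psi|^2\|_{L^{r/2}}=\|\Psi\|_{L^r}^2$ via Lemma \ref{hls}, and the gradient of the nonlocal and local nonlinearities is controlled by the inequalities displayed in the proof of Theorem \ref{lth1}, the Strichartz bounds reduce the contraction criterion to inequalities in $M=3C\|f\|_{H^1}$ and $T$ only; hence $T$ may be chosen uniformly over any $H^1$-ball. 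With this verified, the gluing argument above goes through exactly as in the $L^2$ alternative, and no use of the energy conservation \eqref{energycons} is needed for this statement (it will be used later to turn the alternative into global existence).
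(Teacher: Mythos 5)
Your argument is correct and is essentially the paper's own: the paper proves the $L^2$ blow-up alternative by exactly this contradiction argument (bounded subsequence $t_n\nearrow T_{max}$, uniform local existence time $T(M)$ depending only on the norm of the datum, extension past $T_{max}$ via uniqueness), and states the $H^1$ version without proof as the evident analogue, which is what you have written out. Your added remark that one must check the local existence time in Theorem \ref{lth1} depends only on $\|f\|_{H^1}$ is a worthwhile point the paper leaves implicit, but it does not change the route.
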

\begin{rem}
In defocusing case, $\alpha=1$, we have that the energy, $E(t)=E(0)$, is a positive constant. \\
So we have that 
\begin{equation}\label{h1finita}
\|\nabla \Psi\|_{L^2}^2 \leq E(0).
\end{equation}
Hence, thanks to \eqref{h1finita}, Lemma \ref{conservationmass} and by Proposition \ref{bualternative} we can extend the $H^1$-local solution to global one.

\end{rem}

\begin{cor}[$H^1$ global existence - defocusing case]\label{gser}
Let $f\in H^1$, $1<\gamma<5$, $\sigma\in I_\gamma$ and $\alpha=+1$. The Cauchy problem \eqref{sps1}-\eqref{sps3} has a unique global solution $\Psi \in  C(\R,H^1(\R))\cap L^q(\R,W^{1,r}(\R))$, for any  $(q,r)$ admissible pair.
\end{cor}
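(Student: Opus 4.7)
The plan is to deduce the corollary essentially as a direct consequence of the local existence Theorem, the blow-up alternative (Proposition), and the two conservation laws (mass and energy) specialized to the defocusing regime $\alpha=+1$, as the preceding Remark already hints.

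First, I would invoke the local existence Theorem to produce a maximal solution $\Psi\in C((-T_{min},T_{max}),H^1(\R))$ with the Strichartz-type regularity $\Psi\in L^q_{loc}((-T_{min},T_{max}),W^{1,r}(\R))$ for every admissible $(q,r)$. The whole game is then to show $T_{max}=T_{min}=+\infty$, for which the blow-up alternative reduces the task to proving a uniform bound $\sup_{t\in(-T_{min},T_{max})}\|\Psi(t)\|_{H^1}<\infty$.

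Next I would produce this uniform bound by exploiting the sign structure of $E(t)$ when $\alpha=+1$. By the mass conservation Lemma, $\|\Psi(t)\|_{L^2}=\|f\|_{L^2}$ is already controlled, so only the gradient needs to be estimated. Looking at
\[
E(t)=\tfrac{1}{4}\|\nabla\Psi(t)\|_{L^2}^2+\tfrac{1}{4}\int_{\R}A_0|\Psi|^2(t,x)\,dx+\tfrac{1}{\gamma+1}\int_{\R}|\Psi|^{\gamma+1}(t,x)\,dx,
\]
all three summands are nonnegative: the last one is obvious, and for the middle one I would use the convolution representation $A_0=C(\sigma)|\cdot|^{-(1-\sigma)}\ast|\Psi|^2$ from the proposition on the fractional Poisson equation together with Plancherel, which rewrites it as a positive constant times $\int_{\R}|\xi|^{-\sigma}|\widehat{|\Psi|^2}(\xi)|^2\,d\xi\ge 0$. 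Hence, by the energy conservation Lemma,
\[
\|\nabla\Psi(t)\|_{L^2}^2\le 4E(t)=4E(0),
\]
and combined with mass conservation this yields a time-independent bound on $\|\Psi(t)\|_{H^1}$. The blow-up alternative (Proposition \ref{bualternative}) then forces $T_{max}=T_{min}=+\infty$, so the solution is global and lies in $C(\R,H^1(\R))$.

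Finally, to upgrade this to $\Psi\in L^q(\R,W^{1,r}(\R))$ for every admissible pair is a routine consequence of the local $H^1$ theory: the local existence time $T$ produced by the contraction argument depends only on $\|\Psi(t_0)\|_{H^1}$, and since this quantity is uniformly bounded in $t_0$, one can cover $\R$ by a collection of intervals of uniform length on each of which the Strichartz bound $\|\Psi\|_{L^q([t_0,t_0+T],W^{1,r})}\le C\|\Psi(t_0)\|_{H^1}$ holds with a uniform constant. I expect the only mildly delicate point to be justifying that $\int A_0|\Psi|^2\,dx\ge 0$ (i.e.\ that the Fourier-side manipulation is legitimate under the available regularity); this is handled by a standard density argument using the continuous $H^1$-dependence from the local theorem, exactly as was done to extend the energy identity itself from $H^2$ to $H^1$ data.
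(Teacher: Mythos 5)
Your proposal is correct and follows essentially the same route as the paper: the paper's own justification is the one-line remark preceding the corollary, namely that for $\alpha=+1$ all three terms of the conserved energy are nonnegative, so $\|\nabla\Psi(t)\|_{L^2}^2\le 4E(0)$, which together with mass conservation and the blow-up alternative (Proposition \ref{bualternative}) extends the local $H^1$ solution globally. Your added justification of $\int_{\R}A_0|\Psi|^2\,dx\ge 0$ is fine (and can be seen even more directly from the pointwise nonnegativity of the kernel $|x-y|^{-(1-\sigma)}$ and of $|\Psi|^2$, since $C(\sigma)>0$), and your covering argument for the global Strichartz norms fills in a step the paper leaves implicit.
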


Now we give more attenction to focusing case.
We have the following result.
\begin{thm}\label{h1focus}
Let $f\in H^1(\R)$ and $\alpha=-1$. \\
If $1<\gamma <5$ and $\sigma\in I_\gamma $, then, the Cauchy problem \eqref{sps1}-\eqref{sps3} has a unique global solution $\Psi \in  C(\R,H^1(\R))\cap L^q(\R,W^{1,r}(\R))$, for any $(q,r)$ admissible pair.\\
Otherwise, if $\gamma=5$ and $\sigma\in I_5$, then, there exists $\delta>0$ such that, if $\|f\|_{L^2}<\delta$ then the Cauchy problem \eqref{sps1}-\eqref{sps3} has a unique global solution $\Psi \in  C(\R,H^1(\R))\cap L^q(\R,W^{1,r}(\R))$, for any  $(q,r)$ admissible pair.
\end{thm}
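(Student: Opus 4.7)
The plan is to combine the local well-posedness of Theorem~\ref{lth1} with the blow-up alternative of Proposition~\ref{bualternative}: the $H^1$-solution on a maximal interval $(-T_{min},T_{max})$ extends globally as soon as we have an a priori bound on $\|\Psi(t,\cdot)\|_{H^1}$ that is uniform on that interval. Mass conservation (Lemma~\ref{conservationmass}) already gives $\|\Psi(t)\|_{L^2}=\|f\|_{L^2}$, so the whole matter reduces to controlling $\|\nabla\Psi(t)\|_{L^2}$.

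For this I would exploit the conserved energy \eqref{energy}, which in the focusing case $\alpha=-1$ reads
\[
E(0)=\frac{1}{4}\|\nabla\Psi(t)\|_{L^2}^2+\frac{1}{4}\int_{\R}A_0(t,x)|\Psi(t,x)|^2\,dx-\frac{1}{\gamma+1}\|\Psi(t)\|_{L^{\gamma+1}}^{\gamma+1}.
\]
The key observation is that the nonlocal contribution is automatically nonnegative: by \eqref{convolform}, $A_0(t,\cdot)=C(\sigma)\,|\cdot|^{-(1-\sigma)}*|\Psi(t,\cdot)|^2$ is a convolution of a positive Riesz kernel with a positive density, so $A_0\geq 0$ and hence $\int A_0|\Psi|^2\,dx\geq 0$. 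Discarding it yields
\[
\frac{1}{4}\|\nabla\Psi(t)\|_{L^2}^{2}\leq E(0)+\frac{1}{\gamma+1}\|\Psi(t)\|_{L^{\gamma+1}}^{\gamma+1}.
\]

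To close the estimate I would apply the one-dimensional Gagliardo--Nirenberg inequality
\[
\|\Psi\|_{L^{\gamma+1}}^{\gamma+1}\leq C_\gamma\,\|\nabla\Psi\|_{L^2}^{(\gamma-1)/2}\,\|\Psi\|_{L^2}^{(\gamma+3)/2},
\]
together with mass conservation to replace $\|\Psi(t)\|_{L^2}$ by $\|f\|_{L^2}$. In the subcritical range $1<\gamma<5$ the exponent $(\gamma-1)/2$ is strictly less than $2$, so a Young-type inequality absorbs the gradient power into the left-hand side and produces a uniform bound $\|\nabla\Psi(t)\|_{L^2}^{2}\leq K(\|f\|_{H^1})$ on the maximal existence interval; the blow-up alternative then forces $T_{max}=T_{min}=\infty$, and the Strichartz regularity $L^q W^{1,r}$ is inherited from Theorem~\ref{lth1} applied on successive time intervals. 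In the critical case $\gamma=5$ the Gagliardo--Nirenberg inequality degenerates to
\[
\|\Psi\|_{L^6}^{6}\leq C_5\,\|\nabla\Psi\|_{L^2}^{2}\,\|f\|_{L^2}^{4},
\]
so the right-hand side is quadratic in $\|\nabla\Psi\|_{L^2}$ and absorption succeeds only when $\tfrac{C_5}{6}\|f\|_{L^2}^{4}<\tfrac{1}{4}$. This is precisely the origin of the threshold $\delta>0$ appearing in the statement.

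The main obstacle is, unsurprisingly, the critical exponent $\gamma=5$: because the Gagliardo--Nirenberg estimate is then homogeneous of order $2$ in the gradient, the energy bound is useful only if the mass is small enough to beat the sharp constant, and any attempt to push the argument to large $L^2$-data would require considerably more delicate tools (concentration-compactness, profile decompositions) that lie outside the scope of the present contraction-plus-conservation scheme. A minor point to verify along the way is that the energy is well defined on $C([0,T],H^1(\R))$, which follows at once from the one-dimensional embedding $H^1(\R)\hookrightarrow L^p(\R)$ for every $p\in[2,\infty]$ and from $A_0|\Psi|^2\in L^1$ via the Hardy--Littlewood--Sobolev bound used in the proof of Proposition~1, so no additional work is needed there.
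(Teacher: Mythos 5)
Your proposal is correct and follows essentially the same route as the paper: energy conservation plus the one-dimensional Gagliardo--Nirenberg inequality and mass conservation to bound $\|\nabla\Psi(t)\|_{L^2}$, with the critical threshold $\delta$ coming from the sharp constant, and then the blow-up alternative to globalize. Your explicit remark that $A_0=C(\sigma)\,|\cdot|^{-(1-\sigma)}*|\Psi|^2\geq 0$, so the interaction term can be discarded, is a step the paper uses only implicitly when it drops $\frac14\int A_0|\Psi|^2\,dx$ between its two displayed inequalities; otherwise the two arguments coincide (your Young-inequality absorption is just a rephrasing of the paper's factoring of $\|\nabla\Psi\|_{L^2}^2$).
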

\begin{proof}
The local solution is found by means of a point fix argument in the proof of the Theorem \ref{lth1}. To conclude that the solution actually is global, since $L^2$-norm is conserved, it is sufficient to prove that the norm $\|\nabla \Psi(t,\cdot)\|_{L^2}$ does not blow up.\\
By the Gagliardo-Nirenberg inequality, there exists $C_{GN}>0$ (the sharp constant) such that
\begin{equation}\label{gagliardon}
\|f\|_{\beta+1}^{\beta+1}\leq C_{GN} \|\nabla f\|_{L^2}^{(\beta-1)/2}\|f\|_{L^2}^{(\beta+3)/2},
\end{equation}
for $1\leq \beta <\infty$.\\
So, choosing $\beta=\gamma$, we obtain that 
\begin{align*}
E(0)=E(t)\geq&  \frac{1}{4}\|\nabla \Psi(t)\|_{L^2}^2+\frac{1}{4}\int_{\R}A_0(t)|\Psi(t)|^2\,dx-\frac{C_{GN} }{\gamma+1} \|\nabla \Psi(t)\|_{L^2}^{(\gamma-1)/2}\|\Psi(t)\|_{L^2}^{(\gamma+3)/2}\\
\geq& \frac{1}{4}\|\nabla \Psi(t)\|_{L^2}^2\left( 1-4\frac{C_{GN}}{\gamma+1}\|\nabla \Psi(t)\|_{L^2}^{(\gamma-5)/2}\|\Psi(t)\|_{L^2}^{(\gamma+3)/2} \right). 
\end{align*}
Since the mass is costant, $\|\Psi(t)\|_{L^2}=\|f\|_{L^2}$, if $1<\gamma <5$ ($\sigma\in I_\gamma$), we have that the $H^1$-norm cannot blow up. Indeed, if $\|\nabla \Psi\|_{L^2}$ was large we would control it with the energy:  
\begin{equation}\label{wpH1}
\|\nabla \Psi(t)\|_{L^2}\leq C E(0),
\end{equation}
for some $C>0$. \\
So, the proof in subcritical case is complete.

In the critical case, $\gamma= 5$ ($\sigma\in I_5$), we have that 
\begin{align*}
E(0)\geq& \frac{1}{4}\|\nabla \Psi(t)\|_{L^2}^2\left( 1-4\frac{C_{GN} }{\gamma+1}\|\Psi(t)\|_{L^2}^{4} \right)\\
=& \frac{1}{4}\|\nabla \Psi(t)\|_{L^2}^2\left( 1-\frac{\|f\|_{L^2}^{4}  }{\left( \sqrt[4]{\frac{2}{3}}\sqrt{\frac{\pi}{2}}\right)^4}\right) . 
\end{align*}
As a consequence, if we choose initial data with $L^2$-norm suitably small, $\|f\|_{L^2}<\delta$, with $\delta = \sqrt[4]{\frac{2}{3}}\sqrt{\frac{\pi}{2}} $, we have the \eqref{wpH1}, and the proof is complete. 
\end{proof}
\begin{rem}
The sharp constant for the Gagliardo-Nirenberg inequality in one dimensional setting was derived by Nagy in 1941; Weinstein in 1983 solved the problem for higher dimensions.
\end{rem}
\begin{rem}
The physical meaning of the Theorem \ref{h1focus} is that for waves propagating in a weakly focusing medium ($1<\gamma<5$), the potential term in the energy, $ E_{pot}=\frac{-1}{\gamma+1}\|\Psi\|^{\gamma+1}_{L^{\gamma+1}}$, is dominated by interaction term, $E_{int}=\frac{1}{4} \int_{\R}A_0|\Psi|^2(t,x)\,dx $, and by kinetic term, $ E_{kin}=\frac{1}{4}\|\nabla \Psi(t)\|_{L^2}^2$, in according to Gagliardo-Nirenberg inequality. \\
When $\gamma=5$, the potential energy and the kinetic one, seems to balance out, so, global results, at least in the case of focusing nonlinear Schr\"odinger, are not guaranteed.
\end{rem}

\section{$L^4-L^\infty$ estimates of the solution of cubic (FSPS)}
In this section we will use $L^p-L^q$ Gronwall's inequalities to establish a decay estimate for the solution of the (FSPS) system. Compare with Cazenave, we have the following result.
\begin{lem}\label{2gron}
Let $1\leq q<p\leq \infty$, $1\leq \rho <\infty$ with $\frac{1}{\rho}=\frac{1}{q}-\frac{1}{p}$, $C_1>0$ and $0<T\leq \infty$. We consider $a\in L^{\rho}(0,T)$ and $v$ a function that satisfies the following inequality:
\begin{equation}\label{ipotesigro}
\|v\|_{L^p(0,t)}\leq C_1+\|av\|_{L^q(0,t)},
\end{equation}
for all $t\in (0,T)$. \\
Then 
\begin{equation}\label{gammagronwall}
\|v\|_{L^p(0,t)}\leq 2C_1 \Gamma(2+2^{\rho}\|a\|^{\rho}_{L^{\rho}(0,t)}),
\end{equation}
for all $t\in (0,T]$.
\end{lem}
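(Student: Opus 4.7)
The plan is a partition-plus-iteration argument driven by H\"older's inequality. Setting $\phi(t):=\|v\|_{L^p(0,t)}$, the exponent identity $\tfrac1q=\tfrac1\rho+\tfrac1p$ makes H\"older's inequality give $\|av\|_{L^q(I)}\leq\|a\|_{L^\rho(I)}\|v\|_{L^p(I)}$ on any subinterval $I\subseteq(0,T)$, so \eqref{ipotesigro} reads $\phi(t)\leq C_1+\|av\|_{L^q(0,t)}$. A direct application of H\"older to the full interval only yields $\phi(t)(1-\|a\|_{L^\rho(0,t)})\leq C_1$, which is useless once $\|a\|_{L^\rho(0,t)}\geq 1$. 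My remedy will be to split $(0,t)$ into finitely many pieces on which $\|a\|_{L^\rho}$ is at most $1/2$ — so that H\"older acts as a strict contraction on each — and then chain the resulting estimates.

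To execute this, fix $t\in(0,T]$. By continuity and monotonicity of $s\mapsto\|a\|^\rho_{L^\rho(0,s)}$, I choose $0=\tau_0<\tau_1<\cdots<\tau_N=t$ with $\|a\|_{L^\rho(\tau_j,\tau_{j+1})}=1/2$ for $j<N-1$ and $\|a\|_{L^\rho(\tau_{N-1},t)}\leq 1/2$; counting then forces $N\leq\lceil 2^\rho\|a\|^\rho_{L^\rho(0,t)}\rceil$. For $s\in[\tau_k,\tau_{k+1}]$, additivity of the $L^q$ integral, H\"older applied on each piece, the monotonicity $\|v\|_{L^p(\tau_j,\tau_{j+1})}\leq\phi(\tau_{j+1})$, and the elementary bound $(\sum x_j^q)^{1/q}\leq\sum x_j$ combine to give
\[
\|av\|_{L^q(0,s)}\leq\tfrac12\phi(s)+\tfrac12\sum_{j=1}^{k}\phi(\tau_j).
\]
Reinserting this into \eqref{ipotesigro} and absorbing $\tfrac12\phi(s)$ on the left yields the recursion
\[
\phi(s)\leq 2C_1+\sum_{j=1}^{k}\phi(\tau_j),\qquad s\in[\tau_k,\tau_{k+1}].
\]

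Finally, setting $a_k:=\phi(\tau_k)$ with $a_0=0$, a routine induction on $a_{k+1}\leq 2C_1+\sum_{j=1}^{k}a_j$ gives $a_k\leq 2^k C_1$, hence $\phi(t)\leq 2^N C_1$. Writing $X:=2^\rho\|a\|^\rho_{L^\rho(0,t)}$ and $N\leq\lceil X\rceil$, the claim reduces to the purely elementary comparison $2^{\lceil X\rceil}\leq 2\,\Gamma(2+X)$, which I verify by splitting $X\in(k-1,k]$ and combining monotonicity of $\Gamma$ on $[2,\infty)$ with the inequality $k!\geq 2^{k-1}$ for $k\geq 1$. The main obstacle I expect is calibrating the partition threshold: $\eta=1/2$ is exactly what leaves $\tfrac12\phi(s)$ absorbable on the left while producing multiplier $1$ in front of the sum, so that the recursion closes and propagates only a geometric factor $2$ per step. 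Everything else is bookkeeping.
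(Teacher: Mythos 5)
Your proof is correct, and it follows the same overall strategy as the paper: partition $(0,t)$ into $N$ subintervals on each of which $\|a\|_{L^\rho}\leq 1/2$, use H\"older with $\tfrac1q=\tfrac1\rho+\tfrac1p$ on each piece, absorb the last term, and run a recursion on $a_k=\|v\|_{L^p(0,\tau_k)}$. The difference lies in how the recursion is set up. The paper splits $\|av\|_{L^q(0,\tau_{k+1})}$ into only two pieces, $(0,\tau_k)$ and $(\tau_k,\tau_{k+1})$, and estimates the first by $\|a\|_{L^\rho(0,\tau_k)}\,a_k\leq \tfrac{k}{2}a_k$; this yields $a_{k+1}\leq 2C_1+k a_k$ and hence the factorial growth $a_{k+1}\leq 2C_1(k+1)!=2C_1\Gamma(k+2)$, which is exactly the stated bound. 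You instead decompose over all $k+1$ subintervals via $\ell^q\hookrightarrow\ell^1$, obtaining $a_{k+1}\leq 2C_1+\sum_{j\leq k}a_j$ and therefore the \emph{geometric} bound $a_k\leq 2^kC_1$ — strictly sharper than the paper's $\Gamma$ bound — which you then deliberately relax via $2^{N}\leq 2\,\Gamma(2+2^\rho\|a\|_{L^\rho}^\rho)$ (using $k!\geq 2^{k-1}$ and monotonicity of $\Gamma$ on $[2,\infty)$) to recover the statement as written. The only loose end is cosmetic: from $(N-1)2^{-\rho}\leq\|a\|^{\rho}_{L^\rho(0,t)}$ one gets $N\leq 2^\rho\|a\|^\rho_{L^\rho(0,t)}+1$ rather than $N\leq\lceil 2^\rho\|a\|^\rho_{L^\rho(0,t)}\rceil$ when the right-hand side is an integer, but your final comparison with $\Gamma$ absorbs this harmlessly. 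Both arguments also share the implicit assumption that $\|v\|_{L^p(0,s)}<\infty$ so that the $\tfrac12\phi(s)$ term can be absorbed.
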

\begin{proof}
Suppose $\|a\|_ {L^{\rho}(0,T)}\geq1/2$. We can partition the interval $(0,T)$ into $n$ parts, with $n\geq 2$, such that  $(\tau_{k})_{\{0\leq k\leq n\}}$ is an increasing sequence of time, $\tau_{0}=0$, $\tau_{n}=T$ and 
\begin{equation}
\|a\|_{L^{\rho}(\tau_{k-1},\tau_{k})}=\frac{1}{2},\,\, \text{$1\leq k\leq n-1$}; \, \text{ } \, \|a\|_{L^{\rho}(\tau_{n-1},\tau_{n})}\leq \frac{1}{2}.
\end{equation}
So, we have that
\begin{align*}
\int_{0}^{T}|a|^{\rho}\,ds=&  \int_{0}^{\tau_1}|a|^{\rho}\,ds+\dots +\int_{\tau_{n-1}}^{T}|a|^{\rho}\,ds\\
\leq& \frac{1}{2^{\rho}}+\dots + \frac{1}{2^{\rho}}= \frac{n}{2^{\rho}}.
\end{align*}
We put $n= [2^\rho \|a\|_{L^\rho(0,T)}^\rho]+1$.\\
Set $a_0=0$ and $a_{k}= \|v\|_{L^p(0,\tau_k)}$.
By the \eqref{ipotesigro} and by the H\"older inequality, we have 
that 
\begin{align}
a_{k+1}=& \|v\|_{L^p(0,\tau_{k+1})}\leq C_1+ \|av\|_{L^q(0,\tau_{k})}+\|av\|_{L^q(\tau_{k},\tau_{k+1})}\notag\\
\leq& C_1 + \|a\|_{L^\rho(0,\tau_{k})}\|v\|_{L^p(0,\tau_{k})}+\|a\|_{L^\rho(\tau_{k},\tau_{k+1})}\|v\|_{L^p(\tau_{k},\tau_{k+1})}\notag\\
\leq& C_1+\frac{k}{2} a_{k} + \frac{1}{2} a_{k+1}\notag
\end{align}
So we have that 
\begin{equation}
a_{k+1}\leq 2C_1+ k a_{k},
\end{equation}
hence
\begin{align*}
a_{k+1}\leq& 2C_1\left( 1+k+ k(k-1)+ k(k-1)(k-2)+ \dots +  k(k-1)(k-2)\dots 2\cdot 1 \right)\\
 \leq& 2C_1 (k+1)k!\leq  2C_1( k+1)!,
\end{align*}
Let $t\in[\tau_{k},\tau_{k+1}]$. Then 
\begin{equation}
\|a\|_{L^\rho(0,t)}\geq \|a\|_{L^\rho(0,\tau_{k})}= \frac{k^{1/\rho}}{2},
\end{equation}
and so
\begin{equation}
k\leq 2^\rho \|a\|^{\rho}_{L^\rho(0,t)}.
\end{equation}
A simple working gets the thesis. Actually, we have
\begin{equation}
\|v\|_{L^\rho(0,t)}\leq a_{k+1}\leq 2C_1( k+1)!= 2C_1\Gamma(k+2) \leq 2C_1 \Gamma (2+2^\rho\|a\|^\rho_{L^\rho(0,t)}).
\end{equation}
Else if $\|a\|_ {L^{\rho}(0,T)}<1/2$, trivially, we have that
\begin{equation*}
\|v\|_{L^{p}(0,T)}\leq 2C_1.
\end{equation*}
\end{proof}
In different context the next Lemma is useful. 
\begin{lem}\label{1gron}
Let $C_2>0$, $ 1 \leq q < p \leq \infty$ and $v(t) \in C([0,+\infty))$, $a(t) \in L^\infty_{loc}((0,+\infty))$ are positive functions that satisfy the following inequalities
\begin{equation}\label{eq.a2.1}
    \|v\|_{L^p(0,1)}^q \leq C_2,
\end{equation}
\begin{equation}\label{eq.a2.2}
    \|v\|_{L^p(0,t)}^q \leq C_2 +   \int_1^t a(\tau) v(\tau)^q \,d\tau  \,\,\,\text{\,\, $t>1$,}
\end{equation}
then 
\begin{equation}\label{eq.a2.5}
  \|v\|_{L^p(0,t)}  \leq \left(\frac{p}{p-q} \right)^{1/p}C_2^{1/q} \exp\left( \frac{1}{p} \left(\frac{p}{q} \right)^{p/(p-q)}\|a\|_{L^{\frac{p}{p-q}}(1,t)}^{\frac{p}{p-q}} \right),
\end{equation}
with the obvious modifications for $p=\infty$.
\end{lem}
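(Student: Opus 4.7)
The plan is to convert the integral inequality into a Gronwall-type exponential bound via a change of variables and a careful application of Young's inequality. First, I would set $Y(t) = \|v\|_{L^p(0,t)}^p = \int_0^t v^p\,d\tau$ and $\beta = q/p \in (0,1)$, so that $Y'(\tau) = v(\tau)^p$ and $v(\tau)^q = Y'(\tau)^{\beta}$. With $r = p/(p-q) = 1/(1-\beta)$ (conjugate to $1/\beta$ in the Young sense), the hypothesis \eqref{eq.a2.2} becomes
\[
Y(t)^{\beta} \le F(t) := C_2 + \int_1^t a(\tau)\, Y'(\tau)^{\beta}\, d\tau, \qquad t>1,
\]
while \eqref{eq.a2.1} yields $Y(1)^{\beta} \le C_2$. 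Note that $F$ is $C^1$ with $F(1) = C_2$ and $F'(t) = a(t)\, Y'(t)^{\beta}$, so the whole problem is an integro-differential inequality for the pair $(Y,F)$ satisfying $F' = a(Y')^{\beta}$ and $Y^{\beta}\le F$.

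The core of the argument is to derive a linear Gronwall-type differential inequality from this relation. I would apply Young's inequality with a parameter $\lambda>0$ to the integrand,
\[
a\,Y'^{\beta} \le \frac{\lambda^r}{r}\,a^r + \frac{\beta}{\lambda^{1/\beta}}\,Y',
\]
integrate from $1$ to $t$, and combine the outcome with the constraint $Y(t)\le F(t)^{1/\beta}$. Optimizing $\lambda$ (possibly allowing it to depend on $F(t)$ so as to balance the two Young terms) should reduce the problem to a pointwise inequality of the form $F'(t) \le c(p,q)\, a(t)^r\, F(t)$, where the sharp constant $c(p,q) = (p/q)^{r-1}$ is read off the extremal case $Y^{\beta} \equiv F$: there, differentiation gives the ODE $Y'/Y = (p/q)^r a^r$, which integrates to an exponential immediately.

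Integrating the resulting differential inequality from $1$ to $t$ would then yield a bound of the form $F(t) \le r^{\beta}\, C_2\, \exp\bigl((p/q)^{r-1}\|a\|_{L^r(1,t)}^r\bigr)$; taking the $1/q$-th power and using $\|v\|_{L^p(0,t)}^q \le F(t)$ gives the stated estimate, the prefactor $(p/(p-q))^{1/p}=r^{1/p}$ encoding the looseness of Young's inequality when one passes from the extremal case to a general $Y$. The case $p=\infty$ is handled separately and much more easily: the substitution $w = v^q$ turns the assumption into the classical Gronwall form $\|w\|_{L^\infty(0,t)} \le C_2 + \int_1^t a\,w\, d\tau$, which immediately yields $v(t) \le C_2^{1/q}\exp\bigl(\|a\|_{L^1(1,t)}/q\bigr)$, in agreement with the formula in the limit $p\to\infty$.

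The hard part will be the passage from the integro-differential inequality to the clean pointwise estimate $F' \le c\, a^r F$, since one cannot legitimately differentiate $Y^\beta \le F$ where the inequality is strict. Should the direct Young argument not close, a robust fallback in the spirit of Lemma \ref{2gron} is to partition $[1,t]$ into finitely many subintervals on each of which $\|a\|_{L^r}$ is small enough that Hölder's inequality (applied to $\int a v^q$ via the conjugate pair $(r,p/q)$) yields a direct non-exponential bound, and then iterate a discrete Gronwall across the partition, optimizing the partition mesh to recover the stated exponential growth with the sharp constants $r^{1/p}$ and $(p/q)^{r}/p$.
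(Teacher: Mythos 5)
Your main line of attack does not close, and the gap sits exactly where you flag it. After the parametrized Young splitting $a\,(Y')^{\beta}\le \frac{\lambda^{r}}{r}a^{r}+\beta\lambda^{-1/\beta}Y'$ and integration over $(1,t)$, the second term produces $Y(t)-Y(1)$, which the hypothesis controls only through $Y(t)\le F(t)^{1/\beta}$ with $1/\beta=p/q>1$. The resulting relation
$F(t)\le C_2+\frac{\lambda^{r}}{r}\|a\|_{L^{r}(1,t)}^{r}+\beta\lambda^{-1/\beta}F(t)^{1/\beta}$
is superlinear in $F$ and by itself implies no bound once $F$ is large (compare $F\le 1+F^{2}$, satisfied by every $F$); letting $\lambda$ depend on $F(t)$ or on $\tau$ does not repair this, because there is no pointwise control of $Y'(\tau)$ by $F(\tau)$ --- only the integrated one --- so the advertised reduction to $F'\le c\,a^{r}F$ is not available by this route. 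Your extremal computation correctly predicts the exponential rate, but it is a heuristic, not a proof. The fallback by partitioning $[1,t]$ into pieces where $\|a\|_{L^{r}}$ is small, in the spirit of Lemma \ref{2gron}, would indeed prove finiteness and even a bound of the form $C\exp(c\|a\|_{L^{r}(1,t)}^{r})$, but with constants coming from an optimization over the mesh, not the specific prefactor $(p/(p-q))^{1/p}$ and rate $\frac{1}{p}(p/q)^{p/(p-q)}$ asserted in \eqref{eq.a2.5}; as written it does not prove the lemma.

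The paper's proof supplies the one idea you are missing. Set $\varphi(t)=C_2+\int_1^t a\,v^{q}\,d\tau$, so that $v^{q}=\varphi'/a$ on $(1,t)$ and, crucially, the hypothesis gives the \emph{integrated} bound $\|\varphi'/a\|_{L^{p/q}(1,t)}\le\|v\|_{L^{p}(0,t)}^{q}\le\varphi(t)$. With $\alpha=p/q$ and $\alpha'=p/(p-q)$ one expands
$\varphi^{\alpha}(t)=C_2^{\alpha}+\alpha\int_1^t \frac{\varphi'}{a}\,\varphi^{\alpha-1}a\,d\tau$
and applies H\"older with exponents $(\alpha,\alpha')$, pairing the factor $\varphi'/a$ (whose $L^{\alpha}$ norm is $\le\varphi(t)$) against $a\varphi^{\alpha-1}$; Young's inequality then splits $\alpha\,\varphi(t)\,\|a\varphi^{\alpha-1}\|_{L^{\alpha'}(1,t)}$ into $\frac{1}{\alpha}\varphi^{\alpha}(t)$, which is absorbed on the left, plus $\frac{\alpha^{\alpha'}}{\alpha'}\int_1^t a^{\alpha'}\varphi^{(\alpha-1)\alpha'}$, and since $(\alpha-1)\alpha'=\alpha$ this is a \emph{linear} integral inequality for $\varphi^{\alpha}$, namely $\varphi^{\alpha}(t)\le\alpha' C_2^{\alpha}+\alpha^{\alpha'}\int_1^t a^{\alpha'}\varphi^{\alpha}\,d\tau$. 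Classical Gronwall then yields exactly \eqref{eq.a2.5} after taking the $1/p$-th power. In short: the cure for the superlinearity is not to bound $Y'$ pointwise but to work with $\varphi^{\alpha}$ and use H\"older to exploit the integrated bound on $\varphi'/a$. Your treatment of the case $p=\infty$ is fine.
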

\begin{proof}
By the \eqref{eq.a2.2} we have that 
\begin{equation}\label{eq.a2.2a}
  t > 1 \Longrightarrow  \|v\|_{L^p(0,t)}^q \leq C_2 +  \int_1^t a(\tau) v^q(\tau) d\tau .
\end{equation}
Set
$$ \varphi(t) = C_2+\int_1^t a(\tau) v^q(\tau) d\tau.$$
Then we have the relation
$$ v^q (t) = \frac{\varphi^\prime(t)}{a(t)}, $$
so we have
\begin{equation}\label{eq.a2.2b}
   \left\|\frac{\varphi^\prime}{a}\right\|_{L^{p/q}(1,t)} \leq \varphi(t).
\end{equation}
To simplify the notation, we set $\alpha=p/q$ and $\alpha^\prime= p/(p-q)$.\\
We can use the inequality \eqref{eq.a2.2b} to derive the estimates
\begin{alignat}{3} \nonumber
     \varphi^\alpha(t) =& \varphi^\alpha(1) + \alpha\int_1^t \varphi^\prime(\tau) \varphi^{\alpha-1}(\tau) d \tau  \\
   \nonumber
   =& C_2^\alpha +\alpha\int_1^t \frac{\varphi^\prime(\tau)}{a(\tau)} \varphi^{\alpha-1}(\tau)a(\tau) d \tau \\
   \nonumber
   \leq&  C_2^\alpha + \alpha \left\|\frac{\varphi^\prime}{a} \right\|_{L^{\alpha}(1,t)} \left\|a\varphi^{\alpha-1} \right\|_{L^{\alpha'}(1,t)}\\
   \nonumber
   \leq&  C_2^\alpha+\frac{\varphi^\alpha(t) }{\alpha} + \frac{\alpha^{\alpha'}\left\|a\varphi^{\alpha-1} \right\|_{L^{\alpha'}(1,t)}^{\alpha' }}{\alpha'}.
\end{alignat}
We can rewrite the inequality above as 
\begin{equation*}
\frac{\varphi^\alpha(t)}{\alpha'}\leq C_2^\alpha+ \frac{\alpha^{\alpha'}}{\alpha'}\int_1^t  a^{\alpha'}(\tau) \varphi^\alpha (\tau) \,d\tau,
\end{equation*}
so, we are in position to apply classical Gronwall's inequality and derive that 
\begin{equation*}
\varphi^\alpha(t)\leq \alpha' C_2^{\alpha} \exp\left(\alpha^{\alpha'} \int_1^t  a^{\alpha'}(\tau) \,d\tau \right).
\end{equation*}
Rise to $\frac{1}{\alpha}$ and by the \eqref{eq.a2.2a} we obtain the \eqref{eq.a2.5}.
\end{proof}
Actually, if we have $a\in\L^\infty(0,T)$, for large time $T$, the inequality \eqref{eq.a2.5} is better than \eqref{gammagronwall}.\\

Now we consider the particular case $\gamma=3$ and $\sigma \in (0,1/2]$.

\begin{prop}[$L^4(0,T)L^\infty$ no blow-up result]
Let $T>0$ and let $\Psi\in C([0,T];L^2)$ be the solution of \eqref{sps1}-\eqref{sps3} with initial data $f\in L^2$. Then we have that 
\begin{equation}
\|\Psi\|_{L^4(0,t)L^\infty}<\infty
\end{equation}
for all $t\in (0,T)$.
\end{prop}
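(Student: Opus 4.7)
The plan is to control the $L^4_t L^\infty_x$-norm of $\Psi$ by applying the Strichartz estimate to the Duhamel representation, then close the resulting integral inequality via Lemma \ref{2gron}. Concretely, I would use the endpoint admissible pair $(q,r)=(4,\infty)$ (which is admissible for $n=1$) on the left, and the same pair $(\tilde q,\tilde r)=(4,\infty)$ on the source side, whose dual is $(4/3,1)$. Setting $v(s):=\|\Psi(s,\cdot)\|_{L^\infty}$, this reduces the task to bounding both nonlinear integrands in $L^{4/3}(0,t)L^1_x$:
\begin{equation*}
\|v\|_{L^4(0,t)}\leq C\|f\|_{L^2}+C\big\|[|y|^{-(1-\sigma)}*|\Psi|^2]\Psi\big\|_{L^{4/3}(0,t)L^1_x}+C\big\||\Psi|^2\Psi\big\|_{L^{4/3}(0,t)L^1_x}.
\end{equation*}

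For the local cubic term the pointwise bound $\||\Psi|^2\Psi(s)\|_{L^1_x}\leq v(s)\|\Psi(s)\|_{L^2}^2$ is immediate, and mass conservation (Lemma \ref{conservationmass}) reduces this to $\|f\|_{L^2}^2\,v(s)$. For the Hartree-type term I would apply Lemma \ref{hls} with $p=2$ and $r=2/(1+2\sigma)$ (a legitimate choice since $\sigma\in(0,1/2)$) to get $\||y|^{-(1-\sigma)}*|\Psi|^2\|_{L^2_x}\leq C\|\Psi\|_{L^{4/(1+2\sigma)}_x}^2$, then interpolate between $L^\infty$ and $L^2$ to find $\|\Psi\|_{L^{4/(1+2\sigma)}}\leq\|\Psi\|_{L^\infty}^{(1-2\sigma)/2}\|\Psi\|_{L^2}^{(1+2\sigma)/2}$, and conclude via Cauchy--Schwarz in $x$ plus mass conservation that
\begin{equation*}
\big\|[|y|^{-(1-\sigma)}*|\Psi|^2]\Psi(s)\big\|_{L^1_x}\leq C\|f\|_{L^2}^{2+2\sigma}\,v(s)^{1-2\sigma}.
\end{equation*}
Since $1-2\sigma\in[0,1)$, the elementary inequality $v^{1-2\sigma}\leq 1+v$ allows me to dominate the $L^{4/3}$ time-integral of this quantity by $t^{3/4}+\|v\|_{L^{4/3}(0,t)}$. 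Combining with the local contribution I obtain an inequality of the form
\begin{equation*}
\|v\|_{L^4(0,t)}\leq C_1(t)+C_2\|v\|_{L^{4/3}(0,t)},
\end{equation*}
where $C_1(t)$ is polynomial in $t$ and $C_2$ is a constant, both depending only on $\|f\|_{L^2}$.

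To close this for arbitrary $t\in(0,T)$ I would invoke Lemma \ref{2gron} with $p=4$, $q=4/3$, and $a\equiv C_2$ (so $\rho=2$), producing
\begin{equation*}
\|v\|_{L^4(0,t)}\leq 2C_1(t)\,\Gamma\!\big(2+4C_2^2\, t\big)<\infty
\end{equation*}
for every $t\in(0,T)$, which is the desired conclusion. The principal obstacle I anticipate is the endpoint case $\sigma=1/2$: Lemma \ref{hls} requires $1<r<p<\infty$, but the choice $r=2/(1+2\sigma)$ degenerates to $r=1$ exactly at $\sigma=1/2$, so there one has to either slightly perturb $p$ away from $2$ (adjusting the interpolation exponent accordingly) or invoke a weak-type/Lorentz variant of HLS. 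A secondary technicality is that Lemma \ref{2gron} requires $v\in L^4(0,t)$ a priori; this is provided by the local Strichartz theory of Theorem \ref{lex}, iterated along $(0,T)$ thanks to the global existence corollary.
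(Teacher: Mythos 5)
Your proposal follows essentially the same route as the paper: Strichartz on the one-dimensional pair $(4,\infty)$ with dual source pair $(4/3,1)$, the trivial $L^1_x$ bound plus mass conservation for the cubic term, Hardy--Littlewood--Sobolev with $p=2$ plus interpolation for the Hartree term (producing the same $\|f\|_{L^2}^{2(1+\sigma)}\,v^{1-2\sigma}$ contribution), and closure via the $L^p$--$L^q$ Gronwall Lemma \ref{2gron}. The only divergence is minor and in your favor: you linearize the sublinear power via $v^{1-2\sigma}\leq 1+v$ so that Lemma \ref{2gron} applies verbatim with constant weight $a$, whereas the paper keeps the power $1-2\sigma$ and inserts a time weight $\langle s\rangle^{\frac{3}{2}\frac{\sigma}{1-2\sigma}}$, which does not literally match the lemma's hypothesis $\|av\|_{L^q}$; both arguments degenerate at $\sigma=1/2$, exactly the endpoint issue you correctly flag.
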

\begin{proof}
Let $T>0$ and $t\in(0,T)$. 
By the Strichartz estimates combined with H\"older inequality and Hardy-Littlewood-Sobolev inequality we get:
\begin{align*}
\|\Psi\|_{L^4(0,t)L^\infty } &\leq C\|f\|_{L^2}+ \|A_0\Psi+\alpha |\Psi|^2\Psi\|_{L^{4/3}(0,t)L^1}\\
&\leq C\|f\|_{L^2}+\|A_0\Psi\|_{L^{4/3}(0,t)L^1}+\| |\Psi|^2\Psi\|_{L^{4/3}(0,t)L^1}\\
&\leq C\|f\|_{L^2}+C\|f\|_{L^2}^{2(1+\sigma)}\|\langle s \rangle^{\frac{3}{2}\frac{\sigma}{1-2\sigma}} \Psi \|^{1-2\sigma}_{L^{4/3}(0,t)L^\infty }+C\|f\|_{L^2}^2 \|\Psi\|_{L^{4/3}(0,t)L^\infty}.
\end{align*}
Putting $C_{f}=\max \left\lbrace  C\|f\|_{L^2},C\|f\|_{L^2} ^{2(1+\sigma)},C\|f\|_{L^2}^2 \right\rbrace $, we have that:
\begin{equation*}
\|\Psi\|_{L^4(0,t)L^\infty } \leq C_f+C_f \|\langle s \rangle^{\frac{3}{2}\frac{\sigma}{1-2\sigma}} \Psi \|_{L^{4/3}(0,t)L^\infty }.
\end{equation*}
By the $L^p-L^q$ Gronwall's inequality \eqref{gammagronwall} (similarly with \eqref{eq.a2.5}), we get the thesis:
\begin{equation}
\|\Psi\|_{L^4(0,t)L^\infty_{x} }\leq 2C_f \Gamma \left( 2+ (2C_f)^{\frac{2}{3}\frac{1-2\sigma}{\sigma}}t^2 \right)  .
\end{equation} 
\end{proof}

We conclude with a remark on the \emph{speed of the oscillation} of the solution of (FSPS).
\begin{lem}
Let $T>0$, $ \rho, S\colon (0,T)\times \R \to\R$ and let $\Psi=\rho(t,x)\eu^{iS(t,x)}$ be the solution of \eqref{sps1}-\eqref{sps3}, with initial data $f\in H^1$ and $\alpha=-1$. Define the functions $h\colon \R\times \R \to \R$ and $\theta\colon \R \to \R$ in according to \cite{cazenavebl}:
\begin{gather}
h(t)=\partial_tS\\
\theta(t)=\int_{\R}|\Psi(t,x)|^2h(t,x)\,dx,
\end{gather}
with $t\in (0,T)$. Then, the following statements on the speed of the oscillation, $\theta(t)$, hold:
\begin{itemize}
\item[1)] If $1<\gamma<5$ and $\sigma\in I_\gamma$ then $\theta(t)$ cannot blow up for all $t\in(0,T)$;
\item[2)] elseif $\gamma=5$ and $\sigma\in I_5$, then there exists $\delta >0$ such that, if $\|f\|_2<\delta$, $\theta(t)$ cannot blow up for all $t\in(0,T)$.
\end{itemize}
\end{lem}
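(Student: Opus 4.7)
The plan is to use the Madelung (polar) decomposition $\Psi = \rho\, e^{iS}$ to rewrite $\theta(t)$ in terms of quantities we already control, namely the conserved mass, the energy, and the $H^1$-norm bound supplied by Theorem \ref{h1focus}. First I would substitute $\Psi = \rho\, e^{iS}$ into the equation \eqref{sps1}, compute
\begin{equation*}
i\partial_t\Psi = (i\partial_t \rho - \rho\, \partial_t S)\, e^{iS}, \qquad \tfrac{1}{2}\Delta\Psi = \tfrac{1}{2}\bigl(\Delta \rho + 2i\nabla\rho\cdot\nabla S + i\rho\Delta S - \rho|\nabla S|^2\bigr)\, e^{iS},
\end{equation*}
and separate real and imaginary parts. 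The real part yields a Hamilton--Jacobi-type identity
\begin{equation*}
\rho\, h = \rho\, \partial_t S = \tfrac{1}{2}\Delta\rho - \tfrac{1}{2}\rho|\nabla S|^2 - A_0 \rho - \alpha\, \rho^{\gamma},
\end{equation*}
(valid where $\rho > 0$; we work formally for smooth approximations and then pass to the limit using continuous dependence in $H^1$ from Theorem \ref{lth1}).

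Next I would multiply by $\rho$, integrate in $x$, and integrate the term $\tfrac{1}{2}\int \rho\,\Delta \rho\, dx$ by parts. Using the pointwise identity $|\nabla\Psi|^2 = |\nabla\rho|^2 + \rho^2|\nabla S|^2$, this produces the clean formula
\begin{equation*}
\theta(t) = -\tfrac{1}{2}\|\nabla\Psi(t)\|_{L^2}^2 - \int_\R A_0(t,x)\,|\Psi(t,x)|^2\, dx - \alpha\int_\R |\Psi(t,x)|^{\gamma+1}\, dx,
\end{equation*}
i.e., $\theta$ is (up to sign and constants) essentially $-4E(t)$ plus lower-order pieces, and in particular finite as soon as each term on the right is bounded.

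Now I would bound the three terms. For $\|\nabla\Psi(t)\|_{L^2}^2$: Theorem \ref{h1focus} gives a uniform $H^1$ bound on $\Psi$ in case (1) (subcritical, $1<\gamma<5$, $\sigma\in I_\gamma$) and also in case (2) (critical $\gamma=5$, $\sigma\in I_5$, $\|f\|_{L^2}<\delta$), so this term is controlled by a constant depending only on $E(0)$, $\|f\|_{L^2}$, and $\delta$. For $\int|\Psi|^{\gamma+1}\, dx$: the Gagliardo--Nirenberg inequality \eqref{gagliardon} with $\beta=\gamma$ bounds this by $\|\nabla\Psi\|_{L^2}^{(\gamma-1)/2}\|\Psi\|_{L^2}^{(\gamma+3)/2}$, hence controlled by the mass conservation \eqref{massa} and the $H^1$-bound. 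For the nonlocal term, I would use the representation $A_0 = C(\sigma)\,|\cdot|^{-(1-\sigma)} * |\Psi|^2$ from \eqref{convolform}, so that
\begin{equation*}
\left|\int_\R A_0 |\Psi|^2\, dx\right| \leq C(\sigma)\bigl\| |\cdot|^{-(1-\sigma)} * |\Psi|^2 \bigr\|_{L^{p}} \,\||\Psi|^2\|_{L^{p'}},
\end{equation*}
and pick $p,p'$ compatible with the Hardy--Littlewood--Sobolev Lemma \ref{hls}; the resulting $L^q$ norms of $\Psi$ are controlled through the $1D$ Sobolev embedding $H^1(\R)\hookrightarrow L^q(\R)$ for all $q\in[2,\infty]$, hence by the $H^1$-bound from Theorem \ref{h1focus}.

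The combination of these three bounds gives $|\theta(t)|\leq C(E(0),\|f\|_{L^2})<\infty$ for all $t\in(0,T)$, proving both (1) and (2). The main obstacle I anticipate is the formal use of the polar decomposition at zeros of $\rho$: this is standard in the Madelung formalism and can be handled either by working with a smooth approximating sequence of solutions (justified by the well-posedness of Theorem \ref{lth1}) and passing to the limit, or by interpreting the identities in the distributional sense and checking that all integrals make sense by the $H^1$-regularity of $\Psi$.
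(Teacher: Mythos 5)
Your proposal is correct and follows essentially the same route as the paper: the identity you derive from the real part of the Madelung-transformed equation, $\theta(t) = -\tfrac{1}{2}\|\nabla\Psi\|_{L^2}^2 - \int A_0|\Psi|^2\,dx - \alpha\|\Psi\|_{L^{\gamma+1}}^{\gamma+1}$, is exactly what the paper obtains by multiplying \eqref{sps1} by $\bar\Psi$ and using $h|\Psi|^2=\Im(\bar\Psi\partial_t\Psi)$ (the paper rewrites it via $E(0)$, with what appears to be a harmless typo in the coefficient of the $L^{\gamma+1}$ term), and your subsequent control of the three terms via the a priori $H^1$ bound of Theorem \ref{h1focus}, mass conservation, Gagliardo--Nirenberg, and Hardy--Littlewood--Sobolev matches the paper's argument in both the subcritical and small-data critical cases.
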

\begin{proof}
Let $1<\gamma<5$ and $\sigma\in I_\gamma$. Suppose $\|f\|_{L^2}=1$.  
A simple computation shows that
\begin{equation}\label{veloscill1}
h(t)= \frac{\Im (\bar{\Psi}\partial_t\Psi)}{|\Psi|^{2}}.
\end{equation}
Multlipying by $\bar{\Psi}$ the equation \eqref{sps1}, integrating by parts and by the \eqref{veloscill1} we have that 
\begin{equation*}
\theta(t)=\int_{\R} h(t)|\Psi|^2\,dx= -2E(0)-\frac{1}{2}\int A_0 |\Psi|^2\,dx -\alpha\frac{\gamma}{\gamma+1}\|\Psi\|_{L^{\gamma+1}}^{\gamma+1}.
\end{equation*}
%
Thanks to Gagliardo Nirenberg inequality \eqref{gagliardon} and by finiteness of the energy we have that $\|\nabla\Psi\|_{L^2}$ cannot blow up. 
Hence, $\|\Psi\|_{\gamma+1}$ cannot blow up. So, we can conclude that $\int A_0\Psi\,dx$ does not blow up. \\
In particular, the following inequality holds
\begin{align*}\label{veloscilla}
|\theta(t)|\leq & 2|E(0)|+(\frac{1}{2}\|f\|_2+\frac{C_{GN}^2}{2}\|\nabla \Psi\|_{L^2}^{1-\frac{1}{p'}}\|f\|_2^{1+\frac{1}{p'}})+\frac{\gamma C_{GN}}{\gamma+1}\|\nabla\Psi\|_{L^2}^{\frac{\gamma-1}{2}} \\
= & 2|E(0)|+(\frac{1}{2}+C\|\nabla \Psi\|_{L^2}^{1-\frac{1}{p'}})+C\|\nabla\Psi\|_{L^2}^{\frac{\gamma-1}{2}},
\end{align*}
where $1\leq p<\frac{1}{1-\sigma}$, and $\frac{1}{p}+\frac{1}{p'}=1$.
Hence, the speed of the oscillations (on average) cannot blow up if the initial energy is finite.\\
Note that in defocusing case $\alpha=1$, we get easily
\begin{equation*}
|\theta(t)|\leq C|E(0)|,
\end{equation*}
with $C$ a positive constant.\\
If $\gamma=5$ and $\sigma\in I_5$, we need the smallness of the initial mass $\|f\|_2$ (see the proof of the Theorem \ref{h1focus}) to get the same conclusion.\\
\begin{rem}
The blow-up as a consequence of rotational proprieties of the solution concerning the nonlinear Schr\"odinger problem is studied in \cite{cazenavewebl} and \cite{cazenavebl}.
\end{rem}

\end{proof}

\end{document}